\documentclass[11pt]{amsart}

\usepackage[T1]{fontenc}
\usepackage[english]{babel}
\usepackage[utf8]{inputenc}


\usepackage{amsmath, amsthm, amssymb, amsfonts}
\usepackage{thmtools, thm-restate}

\usepackage{enumerate}
\usepackage{url}                
\usepackage{framed,color}
\definecolor{shadecolor}{RGB}{255,255,255}
\usepackage[a4paper,margin=3cm]{geometry}

\usepackage[all]{xy}
\usepackage{tikz}
\usetikzlibrary{arrows,chains,matrix,positioning,scopes,cd}


\providecommand*{\A}{\mathbb{A}}
\providecommand*{\C}{\mathbb{C}}

\providecommand*{\N}{\mathbb{N}}

\providecommand*{\Z}{\mathbb{Z}}

\providecommand*{\dd}{\mathbf{d}}
\providecommand*{\ee}{\mathbf{e}}


\providecommand*{\cA}{\mathcal{A}}
\providecommand*{\cB}{\mathcal{B}}
\providecommand*{\cC}{\mathcal{C}}

\providecommand*{\cF}{\mathcal{F}}

\providecommand*{\cN}{\mathcal{N}}
\providecommand*{\cO}{\mathcal{O}}

\providecommand*{\cR}{\mathcal{R}}

%
\providecommand*{\add}{\mathrm{add}}
\providecommand*{\gen}{\mathrm{gen}}
\providecommand*{\cogen} {\mathrm{cogen}}
\providecommand*{\proj}{\mathrm{proj}}

\providecommand*{\fdmod}{\mathrm{mod}}
\providecommand*{\modu}{\mathrm{mod}}

\providecommand*{\mon}{\mathrm{mon}}


\providecommand*{\Hom}{\mathrm{Hom}}

\providecommand*{\ext}{\mathrm{Ext}}

%
\providecommand*{\rad}{\mathrm{rad}}

\providecommand*{\GL}{\mathrm{Gl}}
\providecommand*{\aut}{\mathrm{Aut}}

\providecommand*{\NsQ}{\mathrm{N}_s(Q)}
\providecommand*{\NtwoQ}{\mathrm{N}_2(Q)}

\providecommand*{\soc}{\mathrm{soc}}
\providecommand*{\Top}{\mathrm{top}}
\providecommand*{\End}{\mathrm{End}}

\providecommand*{\add}{\mathrm{add}}
\providecommand*{\Aut}{\mathrm{Aut}}
\providecommand*{\Dim}{\underline{\mathrm{dim }}\;}
\providecommand*{\kdual}{\mathrm{D}}

\providecommand*{\rep}{\mathrm{R}}
\providecommand*{\repdd}{\mathrm{R}_d^{\dd}}

\providecommand*{\repst}{\mathrm{R}_{\dd}(\NsQ)^{\rm st}}
\providecommand*{\repsch}{\hat{\mathrm{R}}}
\providecommand*{\fl}{\mathrm{Fl}}

\providecommand*{\Gr}{\mathrm{Gr}}
\providecommand*{\afb}{\mathrm{RF}_\mathbf{d}} 
\providecommand*{\RF}{\mathrm{RF}_\mathbf{d}} 
\providecommand*{\para}{\mathrm{P}_{\mathbf{d}}}
\newcommand*{\pifib}[1][M]{\mathrm{Fl}_Q \binom{#1}{\mathbf{d}}^{(1)}}
\newcommand*{\FMd}[1][M]{\mathrm{Fl}_Q \binom{#1}{\mathbf{d}}}

%

\providecommand*{\op}{\mathrm{op}}
\providecommand*{\res}{\mathrm{res}}
\providecommand*{\inj}{\mathrm{inj}}

\providecommand*{\idt}{\mathrm{id}}

\providecommand*{\Ker}{\mathrm{Ker}\;}
\providecommand*{\Bild}{\mathrm{Im}\;}

\providecommand*{\Qs}{Q^{(s)}}

\providecommand*{\bil}{\text{-}}
\providecommand*{\iff}{\Longleftrightarrow}
\newcommand{\euler}[2]{\langle #1, #2 \rangle}
\providecommand*{\Span}{\mathrm{Span}}
\providecommand*{\Cospan}{\mathrm{Cosp}}
\providecommand*{\longar}{\xrightarrow{\makebox[2em]{}}}
\providecommand*{\st}{\mathrm{st}}
\providecommand*{\tell}{{\mathbb{L}}}


\newtheorem{thm}{Theorem}[section]
\newtheorem{lemma}[thm]{Lemma}

\newtheorem{prp}[thm]{Proposition}
\newtheorem{cor}[thm]{Corollary}
\theoremstyle{remark}
\newtheorem{remark}[thm]{Remark}

\newtheorem{example}[thm]{Example}
\newtheorem*{rem}{Remark}
\theoremstyle{definition}
\newtheorem{defn}[thm]{Definition}

\begin{document}

\title{Quiver-graded Richardson Orbits}

\author{\"Ogmundur Eir\'iksson}
\address{Fakult\"at f\"ur Mathematik, Universit\"at Bielefeld, Postfach 100 131, D-33501 Bielefeld, Germany}
\email{oeirikss@math.uni-bielefeld.de}

\author{Julia Sauter}
\address{Fakult\"at f\"ur Mathematik, Universit\"at Bielefeld, Postfach 100 131, D-33501 Bielefeld, Germany}
\email{jsauter@math.uni-bielefeld.de}

\subjclass[2010]{Primary 14L30; Secondary 16G10, 14M15}


\keywords{Quiver Grassmannian, Representation variety, Quasi-hereditary algebra, Recollement}

\thanks{Both authors are funded by the Humboldt Professorship of William Crawley-Boevey.}


\begin{abstract}
 In Lie theory, a dense orbit in the unipotent radical of a parabolic group under the adjoint action is called a Richardson orbit. 
We define a quiver-graded version of Richardson orbits generalising the classical definition in the case of the general linear group. 
In our setting a product of parabolic subgroups of general linear groups acts on a closed subvariety of the representation space of a quiver.
Such dense orbits do not exist in general. 
We define a quasi-hereditary algebra called the nilpotent quiver algebra whose isomorphism classes of $\Delta$-filtered modules correspond to orbits in our generalised setting.
We translate the existence of a Richardson orbit into the existence of a rigid $\Delta$-filtered module of a given dimension vector.
We study an idempotent recollement of this algebra whose associated intermediate extension functor can be used to produce Richardson orbits in some situations. 
This can be explicitly calculated in examples. 
We also give examples where no Richardson orbit exists.
\end{abstract}

\maketitle

\section{Introduction}

Let $G$ be a reductive group over an algebraically closed field $k$ of characteristic zero. Let $P$ be a parabolic subgroup of $G$ and $\mathfrak{u}_P$ the Lie algebra of its unipotent radical with the $P$-operation given by the adjoint action. 
Richardson proved in \cite{Ric} that $\mathfrak{u}_P$ has an open $P$-orbit, which we call a \emph{classical} Richardson orbit. 

For the group $\mathbb{G}=\GL_{n}$ we see $\mathbb{P}$ as a stabilizer of a flag $0=F^{(0)} \subset F^{(1)} \subset \cdots \subset F^{(s)} = k^{n}$, and $\mathfrak{u}_\mathbb{P}$ as the endomorphisms of $k^{n}$ mapping each subspace $F^{(t)}$ of the flag to $F^{(t-1)}$. 
To explain the idea of quiver-grading, we restrict (here in the introduction) to a finite, simply-laced quiver $Q$ with set of vertices $Q_0$ and set of arrows $Q_1$. 
We choose a diagonal Levi-subgroup $\GL_d := \prod_{i\in Q_0} \GL_{d_i} $ of $\GL_n$ and see 
$\rep_{d} := \prod_{(i\to j) \in Q_1} \Hom_k (k^{d_i}, k^{d_j})$ as a $\GL_d$-subrepresentation of $\rm{Lie} (\mathbb{G})=\rm{M}_n$, by picking the $d_j\times d_i$-matrix blocks in the matrix ring corresponding to $(j,i)\in Q_0^2$ whenever there is an arrow $(i \to j) \in Q_1$ in the quiver. 
We see this as the quiver-graded analogue of the adjoint representation (in type $A$). Now, $\para :=\mathbb{P}\cap \GL_{d}$ is a parabolic subgroup of $\GL_d$, i.e. it is the stabilizer of a $Q_0$-graded flag of subvector spaces $\{ 0 \} = F^{(0)} \subset \cdots  \subset F^{(s-1)} \subset F^{(s)}=\bigoplus_{i\in Q_0} k^{d_i}$, with $\Dim F^{(t)} =: \dd^{(t)}$, $\dd := (\dd^{(1)},\dots,\dd^{(s)}=d)$.  
A \emph{quiver-graded Richardson orbit} is then a dense $\para $-orbit in $\repdd:=\rep_{d}\cap \mathfrak{u}_{\mathbb{P}}$. 
More generally, for arbitrary finite quivers, a $Q_0$-graded flag as before is stabilized by a parabolic subgroup $\para$ in $\GL_d$. The flag also gives a subspace $\repdd := \{ (M_{i\to j}) \in \rep_d \mid M_{i\to j} (F_i^{(t)}) \subset F_j^{(t-1)}, t=1,\dots,s \}$ of the representation space, which has a $\para$-action by restricting the action of $\GL_d$.  
The associated fibre bundle $\GL_d \times^{\para} \repdd$ has a collapsing map $\pi_{\dd} \colon \GL_d \times^{\para} \repdd \to \rep_{d}, [g,M] \mapsto g\cdot M$. 
For acyclic quivers and complete flags this has been studied as a \emph{quiver-graded analogue} of the Springer map for $\GL_{d}$, and it leads to the geometric construction of KLR-algebras, cf. \cite{VV} and earlier work \cite{L}, \cite{R}. 
In reminiscence of the work of Richardson we say that the pair $(Q, \dd )$ \emph{has a Richardson orbit} if there is an open $\para$-orbit in $\repdd$. The connection to Lie theory when passing to a quiver-graded set-up is not entirely clear-but the quiver-graded Springer maps have been used to give a geometric construction of the positive part of the quantum group of the Kac-Moody Lie algebra associated to the quiver (see \cite{L}). 
We were inspired by the work of Br\"ustle-Hille-Ringel-R\"ohrle \cite{BHRR} who studied a finite-dimensional quasi-hereditary algebra to understand Richardson orbits from a representation theoretic side. 
They reproved the result of Richardson via an explicit construction, using ideas from \cite{HR}, where the Richardson orbits are shown to correspond to isomorphism classes of rigid $\Delta$-filtered modules over the Auslander algebra of $k[x]/\langle x^s \rangle$.
One advantage of the approach of \cite{BHRR} is that it provides explicit representatives for the orbits it parametrises. 
Similarly Baur \cite{Ba,Ba2} has constructed standard representatives for dense orbits in the nilpotent radical of parabolic subalgebras for classical Lie-algebras over $\C$,
this was extended to algebraically closed fields of characteristic greater than 2 by Baur-Goodwin \cite{BG}.
The idea to parametrise orbits by isomorphism classes of $\Delta$-filtered modules over an appropriate quasi-hereditary algebra, and such that rigid modules correspond to dense orbits, has also had further success;
Br\"ustle-Hille \cite{BH} extended the construction of \cite{BHRR} to all members of the descending central series of $\mathfrak{n}$,
Goodwin-Hille \cite{GH} constructed a quasi-hereditary algebra to parametrise certain Lie-ideals over the Borel subalgebra of $\mathfrak{gl}_n$, and Jensen-Su-Yu \cite{JSY} did similarly for orbits of the nilpotent radical of seaweed Lie-algebras in $\mathfrak{gl}_n$.

In Section \ref{sec:nsq} we follow the same strategy as above to describe the dense $\para$-orbits of $\repdd$.
To this end we construct an algebra $\NsQ$ for any finite quiver $Q$ and integer $s \in \N$, if $Q$ is the one loop quiver it is isomorphic to the Auslander algebra of $k[x]/\langle x^s \rangle$.
It has a left strongly quasi-hereditary structure given in terms of a layer function $\tell$ on the primitive idempotents as in \cite{Ri4}. It is also right ultra strongly quasi-hereditary in the sense of \cite{Co}.
The nilpotent quiver algebra also arises as a tensor algebra associated to an algebra and a bimodule in two different ways.
We give a bijection between quiver-graded Richardson orbits and the isomorphism classes of rigid $\Delta$-filtered $\NsQ$-modules in Theorem \ref{bijection}. 
Previous work \cite{BHRR} has shown that if $Q$ is the one-loop quiver then $\NsQ$ has only finitely many rigid indecomposable $\Delta$-filtered modules, 
but in general there may be infinitely many rigid indecomposable $\Delta$-filtered modules. 

The subcategory $\cF(\Delta)$ of $\Delta$-filtered $\NsQ$-modules can be embedded into the monomorphism categories as studied by Xiong-Zhang-Zhang in \cite{XZZ}. 
They generalise submodule categories which have been studied extensively, in particular by Ringel-Schmidmeier \cite{RiS,RiS2} and Simson \cite{Si,Si2}, although the studies go all the way back to Birkhoff \cite{Bi}. 
This gives a connection between $\afb$ and quiver flag varieties. In a different process, it is possible to embed quiver flag varieties of acyclic quivers in the fibres of $\pi_{\dd}$ for an appropriate $\dd$. 
These connections are outlined in Section \ref{subsec:moncat}.

In Section 4 we define the idempotent $e\in \NsQ$ to be the sum of the primitive idempotents in $\tell^{-1}(s)$, which we call the highest layer.
The idempotent subalgebra $e\NsQ e$ is isomorphic to the \emph{truncated path algebra} $kQ/J^s$, where $J$ is the two-sided ideal generated by the arrows in $Q$ cf. Lemma \ref{eNsQe}. The representation space of the truncated path algebra is our quiver-graded analogue of the nilpotent cone (i.e. the nilpotent matrices in $M_{n\times n} (k)$). Its irreducible components are parametrised in \cite{BHT}, we reobtain this result as a corollary from our results (cf. Corollary \ref{irrcompOfNilp}). 
We see the restriction of the functor $e\colon \NsQ \bil \modu \to kQ/J^s \bil \modu$ to $\cF(\Delta)$ as an algebraic version of the collapsing map $\pi_{\dd}$.
It is well-known that this functor has a left adjoint $\ell$, right adjoint $r$, and an intermediate extension functor $c= \Bild (\ell \to r )$, all fully faithful cf. \cite{Ku}. 
The functors $r$ and $c$ have their essential images in $\cF(\Delta)$.
For a dimension vector $d \in \N_0^{Q_0}$ we define $\rep_d(kQ/J^s):= \{ M \in \rep_d \mid J^s M =0\}$,
we prove our first main result in Section \ref{subsec:nsq recollement}.

\begin{restatable}{theorem}{existRO}
\label{existRO}
Let $Q$ be a quiver, $s\in \N$ and $d\in \N_0^{Q_0}$. For a given $M\in \rep_d(kQ/J^s)$ we take $\dd= \Dim c(M)$ (resp. $\dd = \Dim r(M)$). 
If $\Bild \pi_{\dd}$ has a dense $\GL_d$-orbit $\cO_N$, then $\pi_{\dd}$ is a desingularisation of $\overline{\cO_N}$ and $\repdd$ has a Richardson orbit contained in $\cO_N$.
This orbit is given by the rigid $\NsQ$-module $c(N)$ (resp. $r(N)$).
\end{restatable}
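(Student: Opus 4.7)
My plan is to transport the geometric question into the module category of $\NsQ$ via Theorem \ref{bijection}, and then exploit the intermediate extension $c$ (resp.\ right adjoint $r$) from the recollement of Section \ref{subsec:nsq recollement}. Both $c$ and $r$ are fully faithful, land in $\cF(\Delta)$, and satisfy $ec(N) = N$ (resp.\ $er(N) = N$). The candidate witness of the Richardson orbit will be the $\Delta$-filtered module $X := c(N)$ (resp.\ $r(N)$).

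I would first verify that $X$ has dimension vector $\dd$: since the dimension vector of the intermediate extension (resp.\ right adjoint) of a $kQ/J^s$-module depends only on the dimension vector of its input, and $M, N \in \rep_d$, we get $\Dim X = \Dim c(M) = \dd$. Under the bijection of Theorem \ref{bijection}, $X$ corresponds to a $\para$-orbit $\cO_X^{\para} \subset \repdd$, and the compatibility of the bijection with $e$ gives $\pi_{\dd}(\cO_X^{\para}) = \GL_d \cdot eX = \cO_N$. Since $\pi_{\dd}$ is proper and the source irreducible, $\Bild \pi_{\dd}$ is closed and irreducible, hence $\Bild \pi_{\dd} = \overline{\cO_N}$. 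The open preimage $\pi_{\dd}^{-1}(\cO_N) = \GL_d \times^{\para}(\cO_N \cap \repdd)$ is then dense in $\GL_d \times^{\para} \repdd$, so $\cO_N \cap \repdd$ is dense open in $\repdd$, and any Richardson orbit of $\repdd$ must lie in $\cO_N$.

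The core of the argument is showing that $\cO_X^{\para}$ itself is the Richardson orbit, equivalently by Theorem \ref{bijection} that $X$ is rigid. Full faithfulness of $c$ (resp.\ $r$) yields $\End_{\NsQ}(X) \cong \End_{kQ/J^s}(N)$, so $\dim \cO_X^{\para} = \dim \para - \dim \End_{kQ/J^s}(N)$. I would upgrade this to density by proving the minimality statement that for every $\Delta$-filtered $\NsQ$-lift $Y$ of $N$ of dimension vector $\dd$ one has $\dim \End_{\NsQ}(Y) \geq \dim \End_{\NsQ}(X)$; this should follow from the universal factorisation $\ell(N) \to Y \to r(N)$ through $c(N) = \Bild(\ell(N) \to r(N))$ supplied by the recollement adjunctions. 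Consequently $\cO_X^{\para}$ is the orbit of maximal dimension in the partition of the irreducible dense open subset $\cO_N \cap \repdd \subset \repdd$ into $\para$-orbits, and hence is itself dense.

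For the desingularisation statement, $\GL_d \times^{\para} \repdd$ is smooth as an affine bundle over the projective flag variety $\GL_d/\para$, and $\pi_{\dd}$ is proper. Density of $\cO_X^{\para}$ in $\repdd$ combined with the dimension formulas $\dim \cO_N = \dim \GL_d - \dim \End_{kQ/J^s}(N)$ and $\dim(\GL_d \times^{\para} \repdd) = \dim \GL_d - \dim \para + \dim \repdd$ forces the two dimensions to coincide, so $\pi_{\dd}$ is generically finite onto $\overline{\cO_N}$. Full faithfulness of $c$ further forces the flag underlying $X$ in the generic fibre $\flQ{N}{\dd}$ to be $\Aut(N)$-fixed (its stabiliser being all of $\Aut(N)$), yielding a single reduced point in the generic fibre and hence birationality. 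The main obstacle will be the minimality step in the third paragraph; the remaining parts follow by combining the bijection with the dimension bookkeeping above.
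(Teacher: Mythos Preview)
Your proposal contains a genuine gap at its very first step. You assert that ``the dimension vector of the intermediate extension (resp.\ right adjoint) of a $kQ/J^s$-module depends only on the dimension vector of its input'', and use this to conclude $\Dim c(N) = \Dim c(M) = \dd$. This is false. By Lemma~\ref{lem:dim c} one has $c(M)_t \cong J^{s-t}M$ and $r(M)_t \cong \soc^t(M)$, both of which depend on the module structure of $M$, not merely on $\Dim M$. (For a concrete failure: take $Q$ the Jordan quiver, $s=2$, $d=2$; the module $S\oplus S$ and the module $k[x]/x^2$ have the same dimension but $\Dim c$ equals $(0,2)$ and $(1,2)$ respectively.) Without $\Dim c(N) = \dd$ your candidate $X = c(N)$ does not even live in $\rep_{\dd}(\NsQ)$, so the entire orbit comparison collapses. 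Your minimality step is then either ill-posed (if $\Dim c(N) \neq \dd$) or vacuous (if $\Dim c(N) = \dd$, Lemma~\ref{fibre lemma} forces the fibre over $N$ to be a single point, so $c(N)$ is the \emph{only} lift).

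The paper's proof supplies exactly the missing ingredient and then bypasses your rigidity argument altogether. By Proposition~\ref{prp:semi-cont}(1), $N \mapsto \Dim c(N)$ is lower semi-continuous, so $\cC_{\dd} = \{N' : \Dim c(N') = \dd\}$ is open and non-empty in $\Bild \pi_{\dd} = \{N' : \Dim c(N') \leq \dd\}$; since $\cO_N$ is dense, $N \in \cC_{\dd}$ and hence $\Dim c(N) = \dd$. Proposition~\ref{prp:semi-cont}(2) then gives that $\pi_{\dd}$ restricts to an isomorphism $\pi_{\dd}^{-1}(\cC_{\dd}) \to \cC_{\dd}$, so $\pi_{\dd}$ is birational onto $\overline{\cO_N}$ and is a desingularisation. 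The Richardson orbit now follows immediately from Corollary~\ref{desing}, with no need for an endomorphism-minimality argument. The $r$-case is handled symmetrically via upper semi-continuity of $\Dim r(-)$. In short: the semi-continuity of $\Dim c$ and $\Dim r$ is the key lemma you are missing, and once you have it the birationality is immediate and the rest of your plan becomes unnecessary.
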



In particular, for every rigid $kQ/J^s$-module $M\in \rep_d$, we find Richardson orbits for $(Q, \Dim c(M))$ and $(Q, \Dim r(M))$, and the Richardson orbits are given by the $\NsQ$-modules $c(M)$ and $r(M)$ respectively. 
Also, whenever $kQ/J^s$ is representation-finite, there exist Richardson orbits for $(Q, \Dim c(M))$ and for $(Q, \Dim r(M))$ for any $M \in R_d(kQ/J^s)$.
We also prove that for $s=2$ the existence of a Richardson orbit is equivalent to the existence of a dense $\GL_d$-orbit in the image of $\pi_\dd$, cf. Proposition \ref{s2RO}.

Since we allow $Q$ to be an arbitrary finite quiver, the image of the collapsing map is often not an orbit closure, e.g. the example in Section \ref{subsec:Kronecker}, then there is no Richardson orbit.
Therefore we introduce a relaxed Richardson property which only requires that a generic fibre $\pi_{\dd}^{-1}(M)$ of the collapsing map has a dense orbit under the automorphism group $\aut_Q(M)$, cf. Definition \ref{DfnrelaxRP}. 
Then Theorem \ref{existRO} generalizes to the following. 

\begin{restatable}{theorem}{relaxed}
\label{thm:relax}
Let $Q$ be a finite quiver, $s\in \N$ and $d\in \N_0^{Q_0}$. For a given $M\in \rep_d(kQ/J^s)$ let either $\dd = \Dim c(M)$ or $\dd = \Dim r(M)$. \\
Then $\pi_{\dd}$ is birational onto its image and $(Q, \dd)$ has the relaxed Richardson property. 
\end{restatable}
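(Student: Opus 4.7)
The plan is to prove birationality of $\pi_{\dd}$ by analysing its fibre over $M$ and then to derive the relaxed Richardson property as an immediate consequence. By Theorem \ref{bijection} the $\Delta$-filtered $\NsQ$-module $c(M)$ (resp.\ $r(M)$) corresponds to a single $\para$-orbit in $\repdd$, and since the recollement satisfies $e c \cong \id$, any representative of this orbit yields a point $[1, c(M)] \in \GL_d \times^{\para} \repdd$ with $\pi_{\dd}([1, c(M)]) = e(c(M)) = M$; the argument below applies verbatim to $r(M)$.

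First I would exploit the full faithfulness of $c$ (a defining property of the recollement) to obtain a ring isomorphism $\End_{\NsQ}(c(M)) \cong \End_{kQ/J^s}(M)$. The $\para$-stabiliser of $c(M) \in \repdd$ is the unit group of $\End_{\NsQ}(c(M))$, so it has the same dimension as $\aut_Q(M)$. Consequently the $\aut_Q(M)$-orbit through $[1, c(M)]$ in the fibre $\pi_{\dd}^{-1}(M)$ is zero-dimensional.

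Next I would upgrade this to the scheme-theoretic statement that $[1, c(M)]$ is a reduced isolated point of $\pi_{\dd}^{-1}(M)$ by computing its Zariski tangent space. I expect it to identify with a relative $\Ext^1$-group in $\cF(\Delta)$ that parametrises infinitesimal deformations of the $\para$-stable flag inside $M$; using the characterisation $c(M) = \Bild(\ell(M) \to r(M))$ together with the ultra-strongly quasi-hereditary structure of $\NsQ$ recalled in the introduction, this $\Ext^1$ should vanish. This tangent-space calculation is the main obstacle of the proof.

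Once $\pi_{\dd}^{-1}(M)$ has a reduced isolated point at $[1, c(M)]$, the morphism $\pi_{\dd}$ is étale there; combined with the irreducibility of the source $\GL_d \times^{\para} \repdd$, upper semi-continuity of fibre dimension together with the generic degree then force the generic fibre of $\pi_{\dd}$ to be a single reduced point, so $\pi_{\dd}$ is birational onto its image. The relaxed Richardson property is now immediate: for generic $M' \in \Bild \pi_{\dd}$ the fibre $\pi_{\dd}^{-1}(M')$ is a single point and hence trivially a dense $\aut_Q(M')$-orbit, namely the one through the canonical lift $c(M')$.
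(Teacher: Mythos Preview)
Your approach differs substantially from the paper's and contains a genuine gap at the step you yourself flag as ``the main obstacle''.

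The paper does not attempt any local tangent-space analysis. Instead, it observes that the proof of Theorem~\ref{existRO} already establishes birationality without using the dense-orbit hypothesis, and so carries over verbatim. Concretely: by Proposition~\ref{prp:semi-cont}(1) the locus $\cC_{\dd}=\{N\in\rep_d(kQ/J^s)\mid \Dim c(N)=\dd\}$ is open in $\Bild\pi_{\dd}$ (via lower semi-continuity of $\Dim c(-)$, together with $\Bild\pi_{\dd}=\{N\mid \Dim c(N)\le\dd\}$), and by Proposition~\ref{prp:semi-cont}(2) the functor $c$ induces a \emph{regular} map $c_{\dd}\colon\cC_{\dd}\to\pi_{\dd}^{-1}(\cC_{\dd})$ inverse to $\pi_{\dd}$. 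This is a direct construction of a rational inverse, so $\pi_{\dd}$ is an isomorphism over the open set $\cC_{\dd}$, hence birational; the argument for $\dd=\Dim r(M)$ is symmetric using upper semi-continuity. The relaxed Richardson property is then immediate exactly as you say in your final paragraph: over $\cC_{\dd}$ (resp.\ $\cR_{\dd}$) the fibres are single points.

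Your route instead tries to show that the scheme-theoretic fibre $\pi_{\dd}^{-1}(M)$ has a reduced isolated point at $[1,c(M)]$ by computing a tangent space and arguing it vanishes. You do not carry this out: you only say it ``should'' identify with some relative $\ext^1$ that ``should vanish'' from the ultra-strongly quasi-hereditary structure, without specifying which group or why it is zero. This is not a proof. Moreover, even granting étaleness at one point, your passage from ``étale at a point plus proper plus irreducible source'' to ``generic fibre is a single reduced point'' glosses over why the degree is~$1$ rather than some larger constant; you would still need to know the fibre over $M$ is connected (which is available via Lemma~\ref{fibre lemma}, since for $\dd=\Dim c(M)$ the fibre is the Grassmannian of zero-dimensional submodules), but you do not invoke it. The paper's functorial construction of the inverse sidesteps all of these issues.
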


 
We conclude with a zoo of examples in Section \ref{sec:examples}. 
In Section \ref{subsec:Auslander} we outline the overlap between Auslander algebras and $\NsQ$, which in particular shows how our theory applies to the classical case from \cite{BHRR}.
For the quiver $Q=A_2$ there exists a Richardson orbit for every $\dd$, and we provide an explicit algorithm to find the rigid $\Delta$-filtered module corresponding to that orbit in Section $\ref{subsec:algor}$. 



\section{Quiver-graded Richardson orbits}
\label{sec:QG-RO}

Let $k$ be an algebraically closed field. 
Let $Q = (Q_0,Q_1)$ be a finite quiver with vertices $Q_0$ and arrows $Q_1$, and $A=kQ$ be the path algebra, cf. \cite{ASS}. 
All varieties in this section with an $\rep$ in the name will depend on the quiver $Q$ and all group actions are assumed to be separated.

We fix a dimension vector $d=(d_i)_{i\in Q_0}\in \N_0^{Q_0}$, and denote the \emph{representation variety} (or representation space) of $Q$ for a dimension vector $d$ by 

\[\rep_d := \prod_{(i\to j)\in Q_1} \Hom_k (k^{d_i}, k^{d_j}) .\]
The reductive group $\GL_d := \prod_{i \in Q_0} \GL_{d_i} $ acts on $\rep_d$ by conjugation. 

Let $\dd=(\dd^{(1)}, \ldots , \dd^{(s)} = d)$ be a sequence of dimension vectors, where $\dd^{(t)}= (d_{i}^{(t)})_{i \in Q_0} \in \N_0^{Q_0}$ for $1 \leq t \leq s$, and $d^{(t-1)}_i\leq d^{(t)}_i $ for all $i \in Q_0$ and $1 \leq t\leq s$, we call this a \emph{dimension filtration} of $d$.
For each vertex $i \in Q_0$, fix a flag $F_i = ( F_i^{(1)} \subset \cdots \subset F_i^{(s)}=k^{d_i})$ such that $\dim F_i^{(t)} = d_i^{(t)}$ for $1 \leq t \leq s$. 
By convention we set $F_i^{(0)} = \{0 \} $. 
We say $F$ is a flag of dimension filtration $\dd$.
We denote by
\[ 
\para :=\prod_{i\in Q_0} \{g\in \GL_{d_i} \mid gF_i^{(t)} \subset F_i^{(t)}, \; 1\leq t\leq s\} \subset \GL_d
\]
the parabolic subgroup that stabilises the $Q_0$-graded flag $F:=\bigoplus_{i\in Q_0}F_i$ in  the $Q_0$-graded vector space $k^d:=\bigoplus_{i\in Q_0} k^{d_i}$. 
We also consider the closed subvariety
\[ 
\repdd := \{ (M_{i\to j})_{Q_1} \in \rep_d \mid M_{i\to j}(F_i^{(t)}) \subset F_j^{(t-1)}, 1\leq t\leq s  \} \subset \rep_d. 
\]
Note that this is a vector space, in particular it is smooth and irreducible. When we restrict the $\GL_d$-operation on $\rep_d$ to the subgroup $\para $, we can see $\repdd$ as a $\para $-subrepresentation of $\rep_d$. 

\begin{defn}
We say \emph{there is a quiver-graded Richardson orbit for }$(Q,\dd )$ if there is a dense $\para $-orbit in $\repdd$. In this case we call the dense $\para $-orbit the \emph{quiver-graded Richardson orbit}. 
\end{defn}

The variety $\repdd$ is a $\para $-invariant subvariety of $\rep_d$, we will consider the corresponding collapsing map
\begin{align*}
	\pi_{\dd}: \GL_d \times^{\para } \repdd \rightarrow \rep_d,
	 \quad [g,x] \mapsto g \cdot x.
\end{align*}

Since $\GL_d$ and $\repdd$ are smooth and irreducible the associated fibre bundle $\GL_d \times^{\para } \repdd$ is smooth and irreducible.
Note that $\pi_{\dd} $ is projective and $\GL_d$-equivariant, hence the image $\Bild\pi_{\dd}$ is a closed $\GL_d$-invariant subset of $\rep_d$. 
We identify $\GL_d/\para$ with the $Q_0$-graded flags of dimension filtration $\dd$ via the bijection $g \para \leftrightarrow gFg^{-1}$ and define 
\[
	\afb := \{ (M, U) \in \rep_d \times \GL_d/\para  \mid 
M_{i\to j}(U_i^{(t)}) \subset U_j^{(t-1 )}, \; \forall \: (i \rightarrow j) \in Q_1, \: 1 \leq t \leq s \}.
\]
This is a $\GL_d$-invariant subvariety of $\rep_d\times \GL_d/\para $ with the diagonal group action. 
If we apply \cite[Lemma 4, p.26]{Sl} to $\mathrm{pr}_2: \afb \rightarrow \GL_d/\para $ we get a $\GL_d$-equivariant isomorphism $\varphi \colon \GL_d \times^{\para } \repdd \to \afb$ given by $[g,M] \mapsto (g M g^{-1},g \para) $. Moreover the following diagram commutes 
\[
\begin{tikzcd}
\GL_d \times^{\para } \repdd \ar[rr,"\varphi"] \ar{dr}[swap]{\pi_{\dd}} &  & \afb \ar[dl,"\mathrm{pr}_1"] \\ 
& \rep_d. &
\end{tikzcd}
\]
Here we denote by $\FMd$ the variety of all flags of submodules $U =( U^{(1)}\subset \cdots \subset U^{(s)}=M)$ with $\Dim  U^{(t)}=\dd^{(t)}$. 
From the diagram above we see we can consider the fibre of $\pi_{\dd}$ over $M$ as embedded in a quiver flag variety.
We fix the following notation, where $U^{(0)} := \{ 0 \}$ 
\[ 
\pi^{-1}_{\dd} (M) =: \pifib =\{ U\in \FMd \mid U^{(t)}/ U^{(t-1)} \text{ semi-simple, } 1\leq t\leq s\}.
\]
This is a closed subvariety of $\FMd$. 
The stabilizer of $M$ with respect to the action of $\GL_d$ is $\aut_Q(M)$, which is defined as all elements of $\GL_d$ that are $kQ$-module endomorphisms of $M$. 

\begin{thm}
\label{thm:5equ} Let $(Q,\dd )$ and $\pi_{\dd}$ be as before. \\
Then, the following three statements are equivalent: 
\begin{itemize}
\item[$(1)$]
The variety $\repdd$ has a dense $\para $-orbit.
\item[$(2)$] The variety $\afb$ has a dense $\GL_d$-orbit.
\item[$(3)$] The variety $\Bild \pi_{\dd}$ has a dense $\GL_d$-orbit $\cO$ and for every point $M \in \cO$ 
the variety $\pifib$ has a dense $\aut_Q(M)$-orbit.
\end{itemize}
\end{thm}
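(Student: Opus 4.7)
The plan is to use the two $\GL_d$-equivariant maps already set up in the text: the isomorphism $\varphi\colon \GL_d \times^{\para} \repdd \to \afb$ will give $(1)\iff(2)$, and the projection $\mathrm{pr}_1\colon \afb \to \Bild \pi_{\dd}$ will give $(2)\iff(3)$.

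For $(1)\iff(2)$ I would use the standard fact that, on an associated fibre bundle $\GL_d \times^{\para}\repdd$, the $\GL_d$-orbits are in bijection with the $\para$-orbits on $\repdd$ via $[g,x]\mapsto \para\cdot x$, with $\dim \GL_d\cdot [e,x]=\dim \GL_d-\dim\para+\dim\para\cdot x$. Comparing with $\dim(\GL_d\times^{\para}\repdd)=\dim\GL_d-\dim\para+\dim\repdd$, a $\para$-orbit is open (equivalently dense, since $\repdd$ is irreducible) iff the corresponding $\GL_d$-orbit is dense. Transferring through $\varphi$ yields the equivalence.

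For $(2)\Rightarrow(3)$, assume $\cO'\subset \afb$ is a dense $\GL_d$-orbit. Since $\mathrm{pr}_1$ is projective, it is closed, hence $\Bild\pi_{\dd}=\mathrm{pr}_1(\overline{\cO'})\subset\overline{\mathrm{pr}_1(\cO')}$, so $\cO:=\mathrm{pr}_1(\cO')$ is a dense $\GL_d$-orbit in $\Bild\pi_{\dd}$. Being locally closed and dense, $\cO$ is even open in $\Bild\pi_{\dd}$. Fix $M\in\cO$; then $\cO\cong \GL_d/\aut_Q(M)$, and restricting $\mathrm{pr}_1$ over $\cO$ gives (by \cite[Lemma 4, p.26]{Sl}, applied to the $\aut_Q(M)$-principal bundle $\GL_d\to\cO$ with fibre $\pifib$) a $\GL_d$-equivariant isomorphism $\mathrm{pr}_1^{-1}(\cO)\cong \GL_d\times^{\aut_Q(M)}\pifib$. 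The dense $\cO'$ sits in the open $\mathrm{pr}_1^{-1}(\cO)$, and transfers under this identification to a dense $\GL_d$-orbit in $\GL_d\times^{\aut_Q(M)}\pifib$, which by the same bundle-orbit correspondence as above corresponds to a dense $\aut_Q(M)$-orbit in $\pifib$. Since all points of $\cO$ are $\GL_d$-conjugate, the fibres $\pifib$ and their stabilizer actions are mutually isomorphic, so the property holds for every $M\in\cO$. Conversely, for $(3)\Rightarrow(2)$, take $\cO$ dense in $\Bild\pi_{\dd}$; then $\mathrm{pr}_1^{-1}(\cO)$ is open dense in $\afb$ (complement of $\mathrm{pr}_1^{-1}(\Bild\pi_{\dd}\setminus\cO)$, which is closed proper), and the same bundle identification turns a dense $\aut_Q(M)$-orbit in $\pifib$ into a dense $\GL_d$-orbit in $\mathrm{pr}_1^{-1}(\cO)$, hence in $\afb$.

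The main technical step will be the identification $\mathrm{pr}_1^{-1}(\cO)\cong \GL_d\times^{\aut_Q(M)}\pifib$: one needs to recognise $\mathrm{pr}_1\colon \mathrm{pr}_1^{-1}(\cO)\to \cO$ as the fibre bundle associated with the $\aut_Q(M)$-principal bundle $\GL_d\to \GL_d/\aut_Q(M)=\cO$, via the same Slodowy-style argument that produced $\varphi$. Everything else is bookkeeping of dimensions and the general principle that, for a connected reductive group acting on an irreducible variety, a dense orbit is automatically open. The only place one must be careful is that the $\aut_Q(M)$-action on $\pifib$ appearing from the bundle description coincides with the restriction of the $\GL_d$-action on flags, which is immediate from the definition of $\afb$ and the stabiliser.
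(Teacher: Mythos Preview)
Your proposal is correct and follows essentially the same route as the paper. The paper isolates the bundle--orbit correspondence as a separate lemma (for $H\subset G$ closed and $X$ an irreducible $H$-variety, $G\times^H X$ has a dense $G$-orbit iff $X$ has a dense $H$-orbit), proved by showing directly that each $G$-orbit meets $X$ precisely in an $H$-orbit; you instead reach the same conclusion via the dimension formula $\dim G\cdot[e,x]=\dim G-\dim H+\dim H\cdot x$, which is an equivalent packaging. Both proofs then apply this correspondence twice---once with $H=\para$ acting on $\repdd$ for $(1)\Leftrightarrow(2)$, and once with $H=\aut_Q(M)$ acting on $\pifib$ via the identification $\mathrm{pr}_1^{-1}(\cO)\cong \GL_d\times^{\aut_Q(M)}\pifib$ for $(2)\Leftrightarrow(3)$---so the architecture is identical.
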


For the proof we need the following lemma.

\begin{lemma}
\label{lem:1}
Let $H$ be a closed algebraic subgroup of an algebraic group $G$, and let $X$ be an irreducible $H$ variety.
The following are equivalent
\begin{itemize}
\item[$(1)$]
$G \times^H X$ has a dense $G$-orbit.
\item[$(2)$]
$X$ has a dense $H$-orbit.
\end{itemize}
\end{lemma}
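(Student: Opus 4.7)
The plan is to use the canonical projection $p\colon G\times^H X \to G/H$, $[g,x]\mapsto gH$, which is a locally trivial bundle with fiber $X$ (over $eH$, canonically identified via $x\leftrightarrow [e,x]$). Since $G/H$ is a single $G$-orbit, every $G$-orbit in $G\times^H X$ surjects onto $G/H$ and hence meets the fiber over $eH$; moreover its intersection with that fiber is precisely an $H$-orbit. This sets up an inclusion-preserving bijection between $G$-orbits in $G\times^H X$ and $H$-orbits in $X$.

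The main ingredient is then a stabilizer/dimension computation. Unwinding the equivalence relation $[g,x]=[gh,h^{-1}x]$ for $h\in H$ gives $\mathrm{Stab}_G([e,x]) = \mathrm{Stab}_H(x)$. Hence
\[
\dim G\cdot [e,x] \;=\; \dim G - \dim \mathrm{Stab}_H(x) \;=\; (\dim G - \dim H) + \dim H\cdot x,
\]
while $\dim (G\times^H X) = (\dim G - \dim H) + \dim X$. Subtracting,
\[
\mathrm{codim}_{G\times^H X}\bigl(G\cdot [e,x]\bigr) \;=\; \mathrm{codim}_X\bigl(H\cdot x\bigr).
\]

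Both implications are now immediate. For $(2)\Rightarrow(1)$: a dense $H$-orbit $Hx$ in the irreducible variety $X$ has codimension zero, so $G\cdot [e,x]$ has codimension zero in $G\times^H X$; the latter being irreducible (an $X$-bundle over $G/H$, both irreducible), this orbit is dense. For $(1)\Rightarrow(2)$: any dense $G$-orbit meets the fiber over $eH$ at some point $[e,x]$, and the codimension equality forces $Hx$ to be dense in $X$.

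The only real subtlety is irreducibility of $G\times^H X$, which needs $G$ connected so that $G/H$ is irreducible and the bundle structure then propagates irreducibility to the total space. This is implicit in the paper's setting, where the lemma is applied with $G=\GL_d$ and $H=\para$; once this point is fixed, the proof is a routine fiber-bundle calculation.
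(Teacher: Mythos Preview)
Your argument is correct and the stabilizer computation $\mathrm{Stab}_G([e,x])=\mathrm{Stab}_H(x)$ is clean, but the route is genuinely different from the paper's. The paper gives a purely topological proof: after establishing the same bijection $G\cdot[1,x]\cap X = H\cdot[1,x]$, it argues for $(1)\Rightarrow(2)$ that a dense orbit is open (being locally closed with closure the whole space), so its intersection with $X$ is non-empty open and hence dense in the irreducible $X$; for $(2)\Rightarrow(1)$ it observes that $X\subset\overline{G\cdot[1,x]}$, and since every $G$-orbit meets $X$ and the closure is $G$-invariant, the closure is everything. No dimension count, no stabilizers, no irreducibility of $G\times^H X$ are needed.

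The practical difference is the one you already identified: your codimension argument requires $G\times^H X$ to be irreducible, hence $G$ connected, whereas the paper's topological argument works for arbitrary algebraic $G$ as stated in the lemma. In the applications ($G=\GL_d$) this is harmless, but it does mean your proof covers a strictly smaller statement than the one written. The trade-off is that your approach yields the extra quantitative information $\mathrm{codim}_{G\times^H X}(G\cdot[e,x])=\mathrm{codim}_X(H\cdot x)$ for \emph{every} orbit, not just the dense one, which can be useful elsewhere.
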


\begin{proof}
We write $\phi\colon X \rightarrow G \times^H X, \; x \mapsto [1,x]$ for the canonical inclusion and identify $X$ with the image $\Bild \phi$.
Let $\cO$ be a $G$-orbit in $G \times^H X$, with $[g,x] \in \cO$.
Then $[1,x] = g^{-1}[g,x] \in \cO$, so any $G$-orbit in $G \times^H X$ has the form $G \cdot [1,x]$ for $x \in X$.
Assume $[g,y] \in \cO \cap X$. Then $[g,y]=[1,x]$ for some $x \in X$, i.e. there is $h \in H$ such that $(gh,h^{-1}y) = (1,x)$ for $x \in X$.
This shows $g \in H$, so $\cO \cap X \subset H \cdot [1,x]$. 
The inclusion $H \subset [1,x] \subset \cO \cap X$ is clear.

Assume $\cO = G \cdot [1,x]$ is a dense $G$-orbit of $G \times^H X$, then it is open. 
That implies $\cO \cap X$ is a non-empty open subset of $X$, and thus dense.

Conversely, assume $H \cdot [1,x] \subset X$ is dense. 
Now $H \cdot [1,x] \subset G \cdot [1,x]$ implies
\[
X = \overline{H \cdot [1,x]} \subset \overline{G \cdot [1,x]}.
\]
But all $G$-orbits intersect $X$, and the closure of $G \cdot [1,x]$ is $G$-invariant, thus it must be the union of all $G$-orbits of $G \times^H X$, which is of course all $G \times^H X$.
\end{proof}

\begin{proof}[Proof of Theorem \ref{thm:5equ}]
Conditions $(1)$ and $(2)$ are equivalent by Lemma \ref{lem:1}.

We show $(2)$ and $(3)$ are equivalent. Let $M \in \repdd$,
we have identities
\[
	\pi_{\dd}^{-1}(\GL_d M) = \GL_d \times^{\mathrm{stab}_{\GL_d}(M)} \pi_{\dd}^{-1}(M)
    = \GL_d \times^{\aut_Q(M)} \pifib.
\]
Thus Lemma \ref{lem:1} shows $\pi_{\dd}^{-1}(\GL_d \cdot M)$ has a dense $\GL_d$-orbit if and only if $\pifib$ has a dense $\aut_Q(M)$-orbit.

Now assume $\afb$ has a dense $\GL_d$-orbit $\GL_d \cdot [1,M]$. 
By the identity above that implies $\pifib$ has a dense $\aut_Q(M)$ orbit.
Moreover $\GL_d \cdot M$ is a dense $\GL_d$-orbit of $\Bild \pi_{\dd}$, because $\pi_{\dd}$ is continuous, thus condition $(3)$ holds.

Conversely, assume condition $(3)$ holds and let $M \in \cO$. 
Then $\pi_\dd^{-1}(\cO)$ is an open, and hence dense, subset of $\afb$.
Furthermore $\pi_{\dd}^{-1}(\cO)$ has a dense $\GL_d$-orbit because $\pifib$ has a dense orbit. 
But a dense orbit of a dense subset of $\afb$ is a dense orbit in $\afb$.

\end{proof}

\begin{example}
The motivating example for our setting comes from Lie theory. 
Let $Q$ be the one-loop quiver $\xymatrix{\bullet \ar@(ur,dr)^a }$ and $d \in \N$. 
Choose some flag $F$ on $k^d$, then $\para \subset \GL_d$ is just a classical parabolic subgroup determined by the flag.
In this case $\repdd$ coincides with the nilpotent radical $\mathfrak{n}$ of the Lie-algebra of $\para$, and the group action of $\para$ on $\repdd$ coincides with the adjoint action.
It is a classical result of Richardson \cite{Ric} that $\mathfrak{n}$ always has a dense $\para$ orbit.
\end{example}

It is straightforward to calculate the dimension of $\afb$ and $\rep_d$ in terms of $Q$ and $\dd$.
\begin{align*}
\dim \repdd &= \sum_{(i\to j) \in Q_1} \sum_{t=1}^s d_j^{t-1} ( d_i^t-d_i^{t-1})\\ 
\dim \RF &= \dim \repdd + \dim \GL_d/\para \\
 &=\sum_{(i\to j) \in Q_1} \sum_{t=1}^s d_j^{t-1} ( d_i^t-d_i^{t-1})+ \sum_{i\in Q_0} \sum_{r > t} (d_i^r-d_i^{r-1}) (d_i^t- d_i^{t-1})
\end{align*}
We set $d\cdot d := \sum_{i\in Q_0} d_i^2$. For reasons explained after Corollary \ref{cor:gl-dim}, we write
\[ 
\langle \dd , \dd \rangle^{(1)} := d\cdot d -\dim \RF.
\]

We have the following general easy lemma. We leave the proof to the reader (else cf. \cite[Lemma 39]{Sa}).

\begin{lemma} 
Let $G$ be a connected algebraic group, $H \subset G$ a closed subgroup and $V$ a $G$-variety with a smooth $H$-subvariety $F$. 
Assume $G \cdot F \subset V$ has a dense $G$-orbit $\cO$, 
then the fibres of $\pi \colon G\times^H F \to G \cdot F, \; [g,f] \mapsto gf,$ over $\cO$ are smooth, pairwise isomorphic, and irreducible of dimension $\dim G \times^H F - \dim \cO$.  
Furthermore, the following are equivalent 
\begin{itemize}
\item[$(1)$] The collapsing map $\pi \colon G\times^H F\to G \cdot F, \; [g,f] \mapsto gf$, is a resolution of singularities for $\overline{\cO}$. 
In other words $G \times^H F$ is irreducible and smooth, $G \cdot F = \overline{\cO}$, and $\pi$ is projective and an isomorphism over $\cO$.
\item[$(2)$]  $\pi^{-1}(v)\neq \emptyset$ for $v \in \cO$, and $\dim G\times^H F =\dim \cO$. 
\end{itemize}
\end{lemma}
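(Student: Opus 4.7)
The plan is to handle separately the structural claim about fibres over $\cO$ and the equivalence of $(1)$ and $(2)$, both relying on $G$-equivariance of $\pi$ and standard fibre dimension theory.

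For the fibres over $\cO$, the crucial input is $G$-equivariance. Since $\cO = G\cdot v$ is a single $G$-orbit, for any $v' = g\cdot v$ left translation by $g$ on $G\times^H F$ restricts to an isomorphism $\pi^{-1}(v)\to \pi^{-1}(v')$, establishing pairwise isomorphism of the fibres. The preimage $\pi^{-1}(\cO)$ is non-empty (since $\cO\subset G\cdot F$) and open in $G\times^H F$ (since $\cO$ is open in $\overline{\cO}=\overline{G\cdot F}$, as orbits are open in their closure). Smoothness of $G\times^H F$ follows from smoothness of $F$ via the fact that it is the quotient of $G\times F$ by the free $H$-action $h\cdot(g,f)=(gh,h^{-1}f)$. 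Applying the fibre dimension formula to the surjection $\pi^{-1}(\cO)\to \cO$ yields $\dim \pi^{-1}(v) = \dim G\times^H F - \dim \cO$ at one (hence, by homogeneity, every) point $v\in \cO$. Smoothness of each fibre follows by generic smoothness combined with $G$-translation, and irreducibility follows from the associated-bundle description $\pi^{-1}(\cO) = G\times^{\mathrm{Stab}_G(v)}\pi^{-1}(v)$ together with irreducibility of $\pi^{-1}(\cO)$ inside $G\times^H F$.

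The direction $(1)\Rightarrow(2)$ is immediate: if $\pi$ is an isomorphism over $\cO$, then each fibre $\pi^{-1}(v)$ with $v\in \cO$ is a single point, and the dimension equality $\dim G\times^H F = \dim \cO$ follows from irreducibility of $G\times^H F$. For the converse $(2)\Rightarrow (1)$, the assumption $\dim G\times^H F = \dim \cO$ together with the fibre dimension formula from the first part forces $\dim \pi^{-1}(v) = 0$ for $v\in \cO$; irreducibility then promotes the fibre to a single reduced point. Hence $\pi$ restricts to a bijective morphism $\pi^{-1}(\cO)\to \cO$ between smooth irreducible varieties of the same dimension, and Zariski's Main Theorem upgrades this to an isomorphism. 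Projectivity of $\pi$ is inherited from projectivity of $G/H$, which holds in the intended applications where $H=\para$ is parabolic; thus $\pi(G\times^H F) = G\cdot F$ is closed in $V$, contains the dense orbit $\cO$, and therefore equals $\overline{\cO}$, verifying all conditions for $\pi$ to be a resolution of singularities.

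The main subtlety will lie in the implicit completeness/irreducibility hypotheses: one must assume $F$ is irreducible so that $G\times^H F$ is irreducible (which is automatic in the applications, where $F$ is a vector space), and one must assume $G/H$ is complete so that $\pi$ is projective. With these caveats understood, the argument is a routine combination of the fibre dimension theorem with Zariski's Main Theorem.
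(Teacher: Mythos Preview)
The paper does not actually prove this lemma: it is introduced as a ``general easy lemma'' with the remark ``We leave the proof to the reader (else cf.\ [Sa, Lemma 39]).'' So there is no in-text argument to compare your attempt against.

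Your argument follows the standard line and is essentially correct. You rightly flag the two hypotheses that are implicit in the statement but required for the proof to go through: irreducibility of $F$ (so that $G\times^H F$ is irreducible, making the dimension formula and the fibre-irreducibility argument meaningful) and completeness of $G/H$ (so that $\pi$ is projective and $G\cdot F$ is closed). Both hold in every application the paper makes, where $H=\para$ is parabolic in $\GL_d$ and $F=\repdd$ is an affine space. One small caveat: your appeal to generic smoothness for the smoothness of the fibres requires characteristic zero, whereas from Section~\ref{sec:QG-RO} onward the paper allows an arbitrary algebraically closed field. In the concrete applications the stabilisers $\aut_Q(M)$ are smooth (being open in a linear space of endomorphisms), so one can bypass generic smoothness via the associated-bundle identification $\pi^{-1}(\cO)\cong G\times^{\mathrm{Stab}_G(v)}\pi^{-1}(v)$ and smoothness descent; but as a statement about arbitrary connected $G$ and closed $H$ this point would deserve slightly more care. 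This does not affect the equivalence $(1)\Leftrightarrow(2)$, for which your dimension count plus Zariski's Main Theorem is exactly what is needed.
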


This applies to our collapsing map $\pi_{\dd}\colon  \GL_d \times^{\para } \repdd \rightarrow \rep_d$ as follows:

\begin{cor} 
Let $M\in \rep_d $ and let $\dd$ be a dimension filtration of $d \in \N_0^{Q_0}$.
Assume that $\Bild \pi_{\dd} =\overline{\cO_M}$. Then the quiver flag varieties $\pifib[N]$ for $N\in \cO_M $ are pairwise isomorphic, smooth and irreducible of dimension 
\[ 
\dim \RF -\dim \rep_d + \dim \ext_Q^1 (M, M).
\]
Moreover the map $\pi_{\dd} \colon \RF \to \Bild \pi_{\dd} $ is a resolution of singularities of $\overline{\cO_M }$ if and only if the following two conditions are fulfilled: 
\begin{itemize}
\item[$(D1)$] 
$\pifib \neq \emptyset$;
\item[$(D2)$] 
$\dim \ext_Q^1 (M,M) =\dim \rep_d -\dim \RF \quad (\iff \; \dim \Hom_Q (M,M)  = \langle \dd ,\dd \rangle^{(1)})$.
\end{itemize}
In this case it follows that the restriction $\pi_{\dd}^{-1}(\cO_M )\to \cO_M $ is an isomorphism.
\end{cor}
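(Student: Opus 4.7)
The plan is to derive this corollary as a direct application of the general lemma stated just above, combined with a standard dimension computation via the Euler form of $Q$. I would set $G=\GL_d$, $H=\para$, $V=\rep_d$, and $F=\repdd$ in the hypotheses of the lemma; the space $\repdd$ is smooth (it is a vector space) and irreducible, and the hypothesis $\Bild\pi_{\dd}=\overline{\cO_M}$ means exactly that $G\cdot F$ has a dense $G$-orbit, namely $\cO_M$. The lemma then immediately yields that the fibres of $\pi_{\dd}$ over $\cO_M$ are smooth, irreducible, and pairwise isomorphic, of dimension $\dim\RF-\dim\cO_M$, and gives the equivalence of the resolution property with the conjunction $\pi_{\dd}^{-1}(N)\neq\emptyset$ for $N\in\cO_M$ and $\dim\RF=\dim\cO_M$.

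Next I would convert $\dim\cO_M$ into the quantity appearing in the statement. By the orbit--stabiliser formula applied to the $\GL_d$-conjugation action on $\rep_d$, one has $\dim\cO_M=\dim\GL_d-\dim\aut_Q(M)=\dim\GL_d-\dim\Hom_Q(M,M)$, because $\aut_Q(M)$ is Zariski-open in the vector space $\End_Q(M)=\Hom_Q(M,M)$. The Euler form identity
\[
\dim\Hom_Q(M,M)-\dim\ext_Q^1(M,M)=\sum_{i\in Q_0}d_i^2-\sum_{(i\to j)\in Q_1}d_id_j=\dim\GL_d-\dim\rep_d,
\]
valid for any representation of an acyclic or general finite quiver (only $\ext^1$ is involved since $kQ$ is hereditary), then gives $\dim\cO_M=\dim\rep_d-\dim\ext_Q^1(M,M)$. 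Substituting into the fibre-dimension formula from the lemma produces the asserted value $\dim\RF-\dim\rep_d+\dim\ext_Q^1(M,M)$, and by pairwise isomorphism this holds for every $N\in\cO_M$.

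For the resolution criterion, the lemma's condition $\dim\RF=\dim\cO_M$ rewrites as $\dim\ext_Q^1(M,M)=\dim\rep_d-\dim\RF$, which is $(D2)$; the parenthetical reformulation follows by using the Euler identity once more together with the definition $\langle\dd,\dd\rangle^{(1)}:=d\cdot d-\dim\RF$. Condition $(D1)$ is the lemma's non-emptiness requirement for a fibre over a point of $\cO_M$ (and is in fact automatic, since $M\in\cO_M\subset\overline{\cO_M}=\Bild\pi_{\dd}$). The final assertion that $\pi_{\dd}^{-1}(\cO_M)\to\cO_M$ is an isomorphism is precisely what is built into part $(1)$ of the general lemma.

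The argument is essentially bookkeeping, and I do not expect a serious obstacle; the only point that deserves care is the Euler form computation, which must correctly identify $\dim\cO_M=\dim\GL_d-\dim\End_Q(M)$ with a quantity involving $\dim\rep_d$ and $\dim\ext_Q^1(M,M)$, so that the lemma's abstract fibre dimension is turned into the representation-theoretically meaningful expression stated in the corollary.
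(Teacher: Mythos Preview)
Your proposal is correct and is exactly the intended argument: the paper presents this corollary as a direct specialisation of the preceding general lemma to $G=\GL_d$, $H=\para$, $V=\rep_d$, $F=\repdd$, with the orbit dimension rewritten via the Euler form for $kQ$. Your parenthetical observation that $(D1)$ is automatic under the standing hypothesis $\Bild\pi_{\dd}=\overline{\cO_M}$ is also correct; the paper notes separately that $(D1)$ and $(D2)$ together imply this hypothesis, which is consistent with your remark.
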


Note that the conditions (D1) and (D2) in the corollary  imply that $\Bild \pi_{\dd}=\overline{\cO_M}$. 
If $\pi_{\dd} \colon \RF \to \Bild \pi_{\dd} $ is a resolution of singularities of an orbit closure, then the fibres over the dense orbit in $\Bild \pi_{\dd} $ consist only of a point. By Theorem \ref{thm:5equ} we conclude. 

\begin{cor} \label{desing}
If $\pi_{\dd}\colon \RF \to \Bild \pi_{\dd} $ is a resolution of singularities of an orbit closure, then 
there is a quiver-graded Richardson orbit for $(Q,\dd  )$. 
\end{cor}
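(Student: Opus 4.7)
The plan is to combine the characterization of resolutions of singularities provided by the preceding lemma with the equivalence (1)$\iff$(3) of Theorem \ref{thm:5equ}.

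First, I would unpack what the hypothesis gives us. By definition (as stated in part (1) of the preceding lemma), if $\pi_{\dd}\colon \RF \to \Bild \pi_{\dd}$ is a resolution of singularities of an orbit closure, then $\Bild \pi_{\dd} = \overline{\cO}$ for some $\GL_d$-orbit $\cO \subset \rep_d$, and the restriction $\pi_{\dd}^{-1}(\cO) \to \cO$ is an isomorphism. In particular, for every $M \in \cO$ the fibre $\pi_{\dd}^{-1}(M) = \pifib$ consists of a single point.

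Next, I would verify condition $(3)$ of Theorem \ref{thm:5equ}. The first half, that $\Bild \pi_{\dd}$ has a dense $\GL_d$-orbit, is immediate from the previous paragraph since $\cO$ is open and dense in its closure $\overline{\cO} = \Bild \pi_{\dd}$. The second half asks that $\pifib$ admit a dense $\aut_Q(M)$-orbit for every $M \in \cO$; this is automatic because $\pifib$ is a single point, so the $\aut_Q(M)$-action on it has the whole variety as its unique orbit.

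Having condition $(3)$ of Theorem \ref{thm:5equ}, we obtain condition $(1)$ of that theorem, i.e. $\repdd$ carries a dense $\para$-orbit, which by definition is the quiver-graded Richardson orbit for $(Q,\dd)$. No step is really an obstacle here, as the content is essentially bookkeeping between the two preceding results; the only mild subtlety is observing that being an isomorphism over $\cO$ forces the generic fibre of $\pi_{\dd}$ to be a point, which is what lets us trivially supply the $\aut_Q(M)$-dense-orbit condition in Theorem \ref{thm:5equ}$(3)$.
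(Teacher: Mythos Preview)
Your proposal is correct and follows essentially the same argument as the paper: the paper simply observes that the fibres over the dense orbit are single points and invokes Theorem \ref{thm:5equ}, which is exactly what you do, only spelled out in more detail via condition~$(3)$.
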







\section{ The nilpotent quiver algebra}
\label{sec:nsq}

Our aim is to describe an algebra whose homological properties can be used to study the variety $\afb$, we fix a field $k$. 
Let $A$ be a finite dimensional algebra. We let $A \bil \fdmod$ (resp. $\fdmod \bil A$) denote the category of finite dimensional left (resp. right) $A$-modules. 
All modules considered are left modules unless explicitly stated otherwise.
We let $\kdual(-) = \Hom_k(-,k)$ denote the vector space duality,
it gives a contravariant functor $\kdual \colon \fdmod \bil A \to A \bil \fdmod$.


\subsection{The nilpotent quiver algebra}
\label{subsec:staircase quiver}

Let $Q = (Q_0,Q_1)$ be an arbitrary finite quiver and let $kQ$ be its path algebra. We write composition from the right to the left, i.e. the path 
$\xymatrix{ i \ar[r]^{\alpha} & j \ar[r]^{\beta} & l }$ 
is written $\beta \alpha$.
Thus representations of $Q$ can be seen as left $kQ$-modules.
Fixing $s \in \Z_+$ we define the \emph{staircase quiver} $\Qs$ of $Q$. 
It has vertices $i_t$ for $i \in Q_0$ and $t=1,\ldots ,s$.
It has two families of arrows, firstly there is an arrow $b(i_t) : i_t \rightarrow i_{t+1}$ for each $i \in Q_0$ and $t=1,....,s-1$. We call these arrows the \emph{vertical arrows}.
Also for each $(a \colon i \to j) \in Q_1$ and $t=2,...,s$ there is an arrow $(a_t \colon i_t \to j_{t-1})$, these arrows are called the \emph{diagonal arrows}. 
Consider the following relations of paths in $k\Qs$:
\begin{alignat}{2}
\label{eq:mesh2} \tag{R1}
	a_{t+1} b(i_t) &= b(j_{t-1}) a_{t}, \qquad &&  \forall (a \colon i \to j) \in Q_1, 1 < t < s,\\
 \label{eq:mesh3} \tag{R2}
	a_2 b(i_1) &= 0, && \forall (a \colon i \to j) \in Q_1.
\end{alignat}

We denote the equioriented quiver of type $A_s$ by $\A_s$, more precisely 
 \[
 	\A_s := \xymatrix{1 \ar[r] & 2 \ar[r] &  \cdots \ar[r] & s }.
\]
For clarification we draw the quiver $\A_n^{(s)}$. The relations are shown with dashed lines. Note that there are no relations in the top row.
\[
\xymatrix@C = 2.7em{
	1_s \ar[rd] &  2_s \ar[rd] &  \cdots \ar[dr] & (n \!- \! 1)_s \ar[dr] & n_s \\
	1_{s-1} \!\! \ar[u]^{b(1_{s-1})} \ar[dr] \ar@{.}[r] & 2_{s-1} \!\! \ar[u] \ar[dr] \ar@{.}[r] & \cdots  \ar[dr] \ar@{.}[r] & (n \! - \! 1)_{s-1} \!\!\! \ar[u] \ar[dr] \ar@{.}[r] & n_{s-1} \!\! \ar[u]_{b(n_{s-1})} \\
	\vdots \ar[u] \ar[dr] \ar@{.}[r]&  \vdots  \ar[u] \ar[dr] \ar@{.}[r] & \cdots \ar[dr] \ar@{.}[r] & \vdots \ar[u] \ar[dr] \ar@{.}[r] & \vdots \ar[u] \\
	1_1 \ar[u]^{b(1_1)} \ar@{.}[r] &  2_1 \ar[u] \ar@{.}[r]  & \cdots  \ar@{.}[r] & (n \! - \! 1)_1 \ar[u] \ar@{.}[r] &  n_1. \ar[u]_{b(n_1)}
}
\]

We let $\mathfrak{I} \subset k \Qs$ be the ideal generated by the relations (\ref{eq:mesh2}) and (\ref{eq:mesh3}), and define the \emph{nilpotent quiver algebra} as $\NsQ := k \Qs / \mathfrak{I}$.

Every path in the path algebra $k\Qs$ can, up to the given relations, be written in a standard form.
Namely, if $\gamma$ is a path in $\Qs$ and not contained in $\mathfrak{I}$, then we have a unique path $\alpha$ of diagonal arrows and a unique path $\beta$ of vertical arrows such that $\gamma \in \beta \alpha + \mathfrak{I}$, note that both $\alpha$ and $\beta$ can be trivial.
All different paths of this form that are not in $\mathfrak{J}$ are linearly independent in the vector space $\NsQ$, so these form a basis.
We call it the standard basis of $\NsQ$.

\begin{example}
Note that if we take $Q$ to be the Jordan quiver, then $Q^{(s)}$ is the double of a quiver of type $A_s$.
\[
Q = \xymatrix{\bullet \ar@(ur,dr)^a }
\qquad
Q^{(s)} =
\xymatrix{
1 \ar@/^/[r]^{b_1} & \ar@/^/[l]^{a_2} 2 \ar@/^/[r]^{b_2} & \ar@/^/[l]^{a_3} \cdots \ar@/^/[r]^{b_{s-2}} & \ar@/^/[l]^{a_{s-1}} s \! - \! 1 \ar@/^/[r]^{b_{s-1}} & \ar@/^/[l]^{a_s} s.
}
\]
It is also immediate that the relations (\ref{eq:mesh2}) and (\ref{eq:mesh3}) are the same relations as those that define the Auslander algebra of $k[x]/\langle x^{s} \rangle$, cf. \cite[Section 5]{BHRR}.
Thus $\NsQ$ is isomorphic to said Auslander algebra. 
This was expected, our construction of $\NsQ$ is designed to expand the setting of \cite{BHRR}.
\end{example}


\subsection{Tensor algebras}
\label{subsec:Tensor algebra}

Let $A = \oplus_{n \geq 0} A_n$ be a positively graded algebra which is finitely generated over $A_0$, then $A_1$ is an $A_0 \bil A_0$-bimodule.
We have a tensor algebra
\begin{align*}
	T_{A_0}A_1 := \bigoplus_{n \geq 0} (\underbrace{A_1 \otimes_{A_0} \otimes \cdots \otimes_{A_0} A_1}_{n-\rm times}) .
\end{align*}
The multiplication is given by taking tensor products, and $T_{A_0}A_1$ has a natural $\Z$-grading where the $n$-th summand above has degree $n$.

The following is a general fact of graded algebras.
\begin{lemma}
\label{lem:tensor=graded}
Let $A \cong A_0 \langle x_1, \dots, x_n \mid r_1,\dots, r_m \rangle$, as a $\Z$-graded algebra.
Assume $x_i,r_j$ are homogeneous of degree 1 for all $i,j$.
Then $A$ is isomorphic to $T_{A_0}A_1$ as a graded algebra.
\end{lemma}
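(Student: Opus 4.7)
The plan is to realize the presentation inside the tensor algebra $T_{A_0}(V)$, where $V$ denotes the free $A_0$-bimodule on $x_1,\dots,x_n$, and then to show that the degree-$1$ relations collapse this to a tensor algebra on $A_1$.

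First, I would identify the free graded $A_0$-algebra on the degree-$1$ generators $x_1,\dots,x_n$ with $T_{A_0}(V)$: both objects corepresent the same functor on graded $A_0$-algebras, namely ``choose $n$ elements of degree $1$''. Under this identification the hypothesis becomes $A \cong T_{A_0}(V)/I$, where $I$ is the two-sided ideal generated by $r_1,\dots,r_m$. Since each $r_j$ has degree $1$, it lies in $V$; letting $W \subseteq V$ be the $A_0$-subbimodule they generate, the ideal $I$ decomposes homogeneously as
\[
I_n \;=\; \sum_{i+j+1=n} V^{\otimes_{A_0} i} \otimes_{A_0} W \otimes_{A_0} V^{\otimes_{A_0} j}.
\]

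The main step is then to show $T_{A_0}(V)/I \cong T_{A_0}(V/W)$ as graded $A_0$-algebras. The projection $V \twoheadrightarrow V/W$ functorially induces a graded surjection $T_{A_0}(V) \twoheadrightarrow T_{A_0}(V/W)$ whose kernel, by iterated right exactness of $\otimes_{A_0}$ in each tensor slot, agrees degreewise with the $I_n$ displayed above. Conversely, the universal property of $T_{A_0}(V/W)$ provides an inverse graded map from $T_{A_0}(V)/I$. Granting this identification, the degree-$1$ part of $A$ is $V/W = A_1$, and hence $A \cong T_{A_0}(A_1)$ as graded algebras.

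The main obstacle will be the bookkeeping verification that $\ker(V^{\otimes n} \twoheadrightarrow (V/W)^{\otimes n}) = I_n$: applying right exactness of $- \otimes_{A_0} V^{\otimes(n-1)}$ to the short exact sequence $W \hookrightarrow V \twoheadrightarrow V/W$ handles the first tensor factor, and iterating $n$ times collects the summands $V^{\otimes i} \otimes_{A_0} W \otimes_{A_0} V^{\otimes j}$ for $i+j+1 = n$. Once this is in hand, the remaining statements about the grading and the identification $V/W = A_1$ are formal manipulations of universal properties.
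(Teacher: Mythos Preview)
Your argument is correct, but it takes a more computational route than the paper. You realise $A$ as $T_{A_0}(V)/I$ for the free bimodule $V$ and then compute the kernel of $T_{A_0}(V)\twoheadrightarrow T_{A_0}(V/W)$ degree by degree via iterated right exactness of the tensor product. The paper instead avoids this kernel computation entirely: it builds two maps and checks they are mutually inverse. The inclusion $A_1\hookrightarrow A$ induces, by the universal property of the tensor algebra, a graded homomorphism $\phi\colon T_{A_0}A_1\to A$; in the other direction, sending each $x_i$ to its image in $A_1\subset T_{A_0}A_1$ extends to $A_0\langle x_1,\dots,x_n\rangle\to T_{A_0}A_1$, and this kills every $r_j$ automatically since the $r_j$ are degree-$1$ elements that vanish in $A$ and hence already vanish in $A_1$. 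Your approach has the advantage of making the identification $A_1=V/W$ explicit; the paper's approach is shorter and sidesteps the bookkeeping you flagged as the main obstacle.
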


\begin{proof}
Since all the $r_j$ are homogeneous the grading on $A$ is well defined.
By the universal property of tensor algebras, the inclusion of $A_1$ in $A$ as an $A_0$-module induces a unique ring homomorphism $\phi \colon T_{A_0}A_1 \to A$. 
Now consider the inclusion map $\{ x_1, \dots,x_n \} \to A_1 \subset T_{A_0}A_1$.
This induces a module homomorphism $A_0 \langle x_1,\dots,x_n \rangle \to T_{A_0}A_1$, with all the $r_j$ in the kernel. Hence this induces a ring homomorphism $A \to T_{A_0}A_1$, which is inverse to $\phi$.
\end{proof}

For the rest of this subsection we assume $A \cong T_{A_0}A_1$ and we identify them as graded algebras.
However we only consider ungraded modules.
Let $A_+$ be the positively graded part of $A$ and let $M \in A \bil \fdmod$. 
The following lemma is an immediate consequence of a well known fact for tensor algebras, found for example in \cite[Prp 2.6, Chapter 2]{Cohn}.

\begin{lemma}
There is an exact sequence of $A$-modules
\begin{equation}
\label{eq:std-seq}
\tag{Std}
\begin{tikzcd}
0 \rar & A_+ \otimes_{A_0} M \rar["\delta_M"] & A \otimes_{A_0} M \rar["\epsilon_M"] & M \rar & 0.
\end{tikzcd}
\end{equation}
We call it the standard sequence.
Let $a \in A$ and $a_1 \in A_1$. The maps are given by
\begin{align*}
\epsilon_M(a \otimes m) & := a \cdot m,\\
\delta_M((a \otimes a_1) \otimes m) &:= (a \otimes a_1) \otimes m - a \otimes a_1 \cdot m.
\end{align*}
Dually, let $M$ be a right $A$-module. Then there is an exact sequence of (left) $A$-modules:
\[
\label{eq:std-seq2} \tag{DStd}
\begin{tikzcd}
	0 \rar & \kdual(M) \rar & \kdual(M \otimes_{A_0} A ) \rar & \kdual(M \otimes_{A_0} A_+ ) \rar & 0.
\end{tikzcd}
\]
\end{lemma}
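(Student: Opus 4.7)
The plan is to obtain both exact sequences as immediate consequences of the standard bimodule resolution of the tensor algebra, which is the well-known fact cited from Cohn. Explicitly, for $A = T_{A_0} A_1$ there is a short exact sequence of $A$-$A$-bimodules
\[
0 \to A \otimes_{A_0} A_1 \otimes_{A_0} A \xrightarrow{d} A \otimes_{A_0} A \xrightarrow{\mu} A \to 0,
\]
where $\mu$ is the multiplication map and $d(a \otimes a_1 \otimes a') = a a_1 \otimes a' - a \otimes a_1 a'$. This is the statement I would cite as input.

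First I would tensor this bimodule resolution on the right by $M$ over $A$. Since $A \otimes_{A_0} A$ and $A \otimes_{A_0} A_1 \otimes_{A_0} A$ are free as right $A$-modules, the functor $- \otimes_A M$ preserves the exactness, yielding a short exact sequence of left $A$-modules
\[
0 \to A \otimes_{A_0} A_1 \otimes_{A_0} M \to A \otimes_{A_0} M \to M \to 0,
\]
whose right-hand map is obviously $\epsilon_M$ and whose left-hand map sends $(a \otimes a_1) \otimes m$ to $a a_1 \otimes m - a \otimes a_1 \cdot m$.

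Second, I would identify $A \otimes_{A_0} A_1 \cong A_+$ as a left-$A$, right-$A_0$ bimodule via the multiplication map $a \otimes a_1 \mapsto a a_1$. This is precisely the construction of the tensor algebra, since $A_+ = \bigoplus_{n \geq 1} A_1^{\otimes_{A_0} n}$ and this decomposition is compatible with the $A$-action on the left. Under this identification, the left-hand map becomes exactly $\delta_M$ as stated, which establishes \eqref{eq:std-seq}. For the dual sequence, I would instead apply $N \otimes_A -$ to the bimodule resolution, where $N$ is a right $A$-module, obtaining a short exact sequence of right $A$-modules, and then apply the exact contravariant functor $\kdual = \Hom_k(-,k)$ which transforms right $A$-modules into left $A$-modules, yielding \eqref{eq:std-seq2}.

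The only non-formal point is checking that the maps arising from tensoring match the explicit formulas stated for $\delta_M$ and $\epsilon_M$, and in particular that the identification $A \otimes_{A_0} A_1 \cong A_+$ is compatible with the differential $d$; apart from this bookkeeping, the lemma is formal given the cited bimodule resolution.
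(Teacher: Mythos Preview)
Your proposal is correct and matches the paper's approach: the paper does not give a proof but simply states the lemma is an immediate consequence of the bimodule resolution of the tensor algebra from Cohn, which is exactly the input you cite and unwind. One small quibble: exactness after applying $-\otimes_A M$ follows not from the freeness of the two left-hand terms as right $A$-modules, but from the fact that the cokernel $A$ is projective (hence the sequence splits as right $A$-modules); the conclusion is of course unaffected.
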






\subsubsection{The nilpotent quiver algebra as a tensor algebra I}
\label{subsub:tensor-Lambda}

We put the following grading on $\NsQ$. 
We give each diagonal arrow of $\Qs$ the degree 1 and each vertical arrow the degree 0.
The relations (\ref{eq:mesh2}) and (\ref{eq:mesh3}) are homogeneous of degree 1 with respect to this grading, so it induces a grading on $\NsQ$. 
We let $\Lambda$ denote the resulting graded algebra.
Observe that $\Lambda$ satisfies the conditions of Lemma \ref{lem:tensor=graded}, thus $\Lambda \cong T_{\Lambda_0} \Lambda_1$ as a graded algebra. From now on we identify those algebras.

The algebra $\Lambda_0$ has the form $\Lambda_0 \simeq (k\A_s)^{Q_0}$, i.e. the disjoint union of $|Q_0|$ components, each isomorphic to the path algebra $k\A_s$.
Moreover $\Lambda_t = 0$ for $t \geq s$.


\subsubsection{The nilpotent quiver algebra as a tensor algebra II}
\label{subsub:tensor-Gamma}

We put the following grading on $\NsQ$. 
We give each vertical arrow of $\Qs$ the degree 1 and each diagonal arrow the degree 0.
The relations (\ref{eq:mesh2}) and (\ref{eq:mesh3}) are homogeneous of degree one with respect to this grading, so it induces a grading on $\NsQ$. 
We let $\Gamma$ denote the resulting graded algebra, where $\Gamma_r$ denotes the degree $r$ part of $\Gamma$.
Similarly as for $\Lambda$, the conditions of Lemma \ref{lem:tensor=graded} hold, so $\Gamma \cong T_{\Gamma_0}\Gamma_1$ as graded algebras.
From now on we identify $\Gamma$ with $T_{\Gamma_0}\Gamma_1$.

\begin{rem}
We have $\Gamma_t = 0$ for $t \geq s$.
The algebra $\Gamma_0$ is actually the path algebra of the subquiver that has only arrows of degree zero, in particular it is hereditary.
Even if $Q$ is connected, $\Gamma_0$ is usually not connected, in fact it has at least $|Q_0|$ components.
\end{rem}

\begin{lemma}
$\Gamma_{\Gamma_0}$ is projective as a right $\Gamma_0$-module.
Moreover both $\Gamma_1$ and $\Gamma$ are flat as right $\Gamma_0$-modules.
\end{lemma}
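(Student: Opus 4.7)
My plan is to deduce both statements from the uniqueness of the standard form of paths in $\NsQ$, which has already been established in this section, by writing down an explicit decomposition of $\Gamma$ as a right $\Gamma_0$-module.

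First, I would record what $\Gamma_0$ looks like. Because the defining relations (\ref{eq:mesh2}) and (\ref{eq:mesh3}) are both homogeneous of degree $1$ in this grading, they impose no relations on $\Gamma_0$. Hence $\Gamma_0$ is the path algebra of the subquiver of $\Qs$ consisting only of the diagonal arrows; this subquiver is acyclic (the index $t$ strictly decreases along each diagonal arrow), so $\Gamma_0$ is a finite-dimensional hereditary algebra whose indecomposable projective right modules are precisely the $e_{i_t}\Gamma_0$.

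Second, I would partition the standard basis of $\NsQ$ according to its vertical factor. By the uniqueness of the standard form, every basis element has the form $\beta\alpha$ with $\alpha$ a diagonal path and $\beta$ a vertical path, and for each vertical path $\beta$ (possibly trivial) with source $s(\beta) = i_t$ the $k$-span
\[
W_\beta \;:=\; \mathrm{span}_k\{\beta\alpha \mid \alpha \text{ a diagonal path ending at } i_t\}
\]
is a right $\Gamma_0$-submodule of $\Gamma$: right multiplication by an element of $\Gamma_0$ only modifies the diagonal factor $\alpha$ on the right, and the result is again in standard form (or zero by composition rules), so the vertical factor $\beta$ is unchanged. Grouping standard basis elements by $\beta$ yields
\[
\Gamma \;=\; \bigoplus_{\beta \text{ vertical path}} W_\beta
\]
as right $\Gamma_0$-modules.

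Third, I would show that left multiplication by $\beta$ induces a right $\Gamma_0$-linear isomorphism $e_{s(\beta)}\Gamma_0 \xrightarrow{\sim} W_\beta$. Surjectivity is the very definition of $W_\beta$; injectivity follows from the uniqueness of the standard form, since for any nonzero $\alpha \in e_{s(\beta)}\Gamma_0$ the product $\beta\alpha$ is a nonzero linear combination of distinct standard basis vectors of $\NsQ$. Therefore
\[
\Gamma \;\cong\; \bigoplus_{\beta \text{ vertical}} e_{s(\beta)}\Gamma_0
\]
as right $\Gamma_0$-modules, which is a direct sum of indecomposable projective right $\Gamma_0$-modules and so is projective. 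Restricting to vertical paths $\beta = b(i_t)$ of length exactly one gives the analogous decomposition of $\Gamma_1$, hence $\Gamma_1$ is projective as a right $\Gamma_0$-module as well. Flatness of both is then automatic, since projective modules are flat.

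The only subtlety is checking that $W_\beta$ is really $\Gamma_0$-stable and that no hidden relations of $\NsQ$ collapse $\beta\alpha$; both reduce to the uniqueness of the standard form proved earlier in Section \ref{subsec:staircase quiver}, so the proof is really a careful bookkeeping argument rather than a conceptual obstacle.
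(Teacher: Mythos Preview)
Your proof is correct and takes a genuinely different route from the paper's. The paper does not decompose $\Gamma$ directly; instead it observes that $\Gamma_0$ is hereditary and then exhibits an injective right $\Gamma_0$-module map $\iota\colon \Gamma_1 \to \Gamma_0$, given by $b\,a_n\cdots a_1 \mapsto a_n\cdots a_1$ (stripping off the single vertical arrow). Since submodules of projectives over a hereditary algebra are projective, this yields projectivity of $\Gamma_1$; flatness of $\Gamma$ is then deduced from the tensor-algebra description $\Gamma \cong T_{\Gamma_0}\Gamma_1$.

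Your argument instead reads off an explicit projective decomposition $\Gamma \cong \bigoplus_{\beta} e_{s(\beta)}\Gamma_0$ straight from the standard basis, and obtains $\Gamma_1$ as the sub-sum over vertical paths of length one. This is more hands-on but buys you more: you get projectivity of $\Gamma$ (not merely flatness) without appealing to the hereditary property or to the tensor-algebra identification, and you see exactly which indecomposable projectives occur. The paper's approach is slicker once one is willing to invoke hereditariness, but it only establishes flatness of $\Gamma$ explicitly, leaving the projectivity claim in the lemma statement to be inferred. Your decomposition closes that gap directly.
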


\begin{proof}
Now $\Gamma$ is isomorphic to the tensor algebra $T_{\Gamma_0} \Gamma_1$. Thus $\Gamma_{\Gamma_0}$ is flat if $\Gamma_1$ is flat as a right $\Gamma_0$-module.
Since projective modules are flat it suffices to show that $\Gamma_1$ is projective as right $\Gamma_0$-module. For this it is sufficient to show that $\Gamma_1$ is a right submodule of $\Gamma_0$, because $\Gamma_0$ is hereditary.

We know $\Gamma_0$ has a basis given by all non-trivial paths of the form $a_n \cdots a_1$, where the $a_m$ are diagonal arrows of $\Qs$.
Similarly $\Gamma_1$ has a basis given by all non-trivial paths in $\Qs$ of the form $b a_n \cdots a_1$, where $a_n \cdots a_1$ is a basis element of $\Gamma_0$, and $b$ is a vertical arrow such that $b a_n \neq 0$.
Clearly there exists at most one such arrow $b$.
Thus there is a well defined injective linear map
\begin{align*}
	\iota \colon \Gamma_1 \rightarrow \Gamma_0, \quad b a_n \cdots a_1 \mapsto a_n \cdots a_1.
\end{align*}
Let $a_0$ be a diagonal arrow, then
\begin{align*}
	\iota(b a_n \cdots a_1 a_0) = a_n \cdots a_1 \cdot a_0 = \iota(b a_n \cdots a_1) a_0.
\end{align*}
Hence $\iota$ is compatible with right multiplication by arrows of degree one.
It is also clearly compatible with right multiplication by trivial paths, thus $\iota$ is a homomorphism of right $\Gamma_0$-modules.
\end{proof}

\begin{prp}
\label{prp:les-ver}
Let $M,N$ be in $\Gamma \bil \fdmod$. We consider them as left $\Gamma_0$-modules where the context requires.
Then there is an exact sequence:
\[
\begin{tikzcd}
	0 \rar & \Hom_{\Gamma}(M,N) \ar[r] & \Hom_{\Gamma_0}(M,N) \ar[r] & 
\Hom_{\Gamma_0}(\Gamma_1 \otimes_{\Gamma_0} M,N)  \ar[out=0, in=180, looseness=1.5, overlay]{dll}  & \\
	& \ext^1_{\Gamma}(M,N) \ar[r] & \ext^1_{\Gamma_0}(M,N)  \ar[r] &
\ext^1_{\Gamma_0}(\Gamma_1 \otimes_{\Gamma_0} M,N) \ar[out=0, in=180, looseness=1.5, overlay]{dll}  \\
	 & \ext^2_{\Gamma}(M,N) \ar[r] & 0.
\end{tikzcd}
\]
\end{prp}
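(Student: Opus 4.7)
The strategy is to apply $\Hom_{\Gamma}(-,N)$ to the standard sequence (\ref{eq:std-seq}) and rewrite the outer terms via a change-of-rings adjunction.

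First I would identify $\Gamma_+$ as a left-$\Gamma$, right-$\Gamma_0$ bimodule. Using the tensor algebra description $\Gamma \cong T_{\Gamma_0}\Gamma_1 = \bigoplus_{n\geq 0} \Gamma_1^{\otimes_{\Gamma_0} n}$, the positively graded part factors as
\[
\Gamma_+ \;=\; \bigoplus_{n \geq 1} \Gamma_1^{\otimes_{\Gamma_0} n} \;\cong\; \Gamma \otimes_{\Gamma_0} \Gamma_1
\]
as $\Gamma\bil\Gamma_0$-bimodules. Tensoring on the right with $M$ over $\Gamma_0$, the standard sequence for $M$ becomes
\[
0 \longrightarrow \Gamma \otimes_{\Gamma_0} (\Gamma_1 \otimes_{\Gamma_0} M) \longrightarrow \Gamma \otimes_{\Gamma_0} M \longrightarrow M \longrightarrow 0.
\]
This is a short exact sequence of left $\Gamma$-modules, to which I will apply $\Hom_{\Gamma}(-,N)$.

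Next I would upgrade the Hom-tensor adjunction
\[
\Hom_{\Gamma}(\Gamma \otimes_{\Gamma_0} X, N) \;\cong\; \Hom_{\Gamma_0}(X, N)
\]
to derived functors. By the previous lemma $\Gamma$ is flat as a right $\Gamma_0$-module, so the functor $\Gamma \otimes_{\Gamma_0} -$ is exact and carries a projective resolution $P^\bullet \to X$ in $\Gamma_0\bil\modu$ to a projective resolution $\Gamma\otimes_{\Gamma_0} P^\bullet \to \Gamma\otimes_{\Gamma_0} X$ in $\Gamma\bil\modu$ (each $\Gamma\otimes_{\Gamma_0} P^i$ is a summand of a free $\Gamma$-module). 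Applying $\Hom_{\Gamma}(-,N)$ to this resolution and using the adjunction termwise yields
\[
\ext^i_{\Gamma}(\Gamma \otimes_{\Gamma_0} X, N) \;\cong\; \ext^i_{\Gamma_0}(X,N) \qquad (i \geq 0).
\]
Applying this identification with $X=M$ and with $X=\Gamma_1\otimes_{\Gamma_0} M$ to the long exact Ext sequence arising from the short exact sequence above produces exactly the six-term piece displayed in the proposition.

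Finally, to terminate the sequence at $\ext^2_{\Gamma}(M,N)\to 0$, I would invoke that $\Gamma_0$ is hereditary: as noted in the preceding remark, $\Gamma_0$ is the path algebra of the subquiver of $\Qs$ on the degree-zero arrows, so $\ext^i_{\Gamma_0}=0$ for $i\geq 2$. Consequently $\ext^2_{\Gamma}(\Gamma\otimes_{\Gamma_0} M,N)\cong \ext^2_{\Gamma_0}(M,N)=0$, which is precisely the zero appearing at the end of the long exact sequence.

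The only genuinely delicate step is the bimodule identification $\Gamma_+ \cong \Gamma \otimes_{\Gamma_0} \Gamma_1$ together with checking that the associativity isomorphism makes the connecting differential in (\ref{eq:std-seq}) match the one induced from $\Gamma \otimes_{\Gamma_0} (\Gamma_1 \otimes_{\Gamma_0} M) \hookrightarrow \Gamma \otimes_{\Gamma_0} M$; everything else is the standard long exact sequence of Ext combined with a change-of-rings spectral sequence that collapses thanks to the flatness and hereditary properties already established.
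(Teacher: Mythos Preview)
Your proof is correct and essentially identical to the paper's argument: both apply $\Hom_\Gamma(-,N)$ to the standard sequence (\ref{eq:std-seq}), use flatness of $\Gamma$ over $\Gamma_0$ to promote the induction--restriction adjunction to the identification $\ext^i_\Gamma(\Gamma\otimes_{\Gamma_0} X,N)\cong \ext^i_{\Gamma_0}(X,N)$ for $X=M$ and $X=\Gamma_1\otimes_{\Gamma_0} M$, and invoke hereditariness of $\Gamma_0$ to truncate the long exact sequence. The bimodule identification $\Gamma_+ \cong \Gamma\otimes_{\Gamma_0}\Gamma_1$ you single out is used tacitly in the paper as well and is immediate from the tensor-algebra description.
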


\begin{proof}
Let $P_{\bullet}$ be a projective resolution of $M$ as a $\Gamma_0$-module. 
Now $\Gamma_{\Gamma_0}$ is flat so $\Gamma \otimes_{\Gamma_0} P_{\bullet}$ is a projective resolution of $\Gamma \otimes_{\Gamma_0} M$.
Also observe that $\Hom_{\Gamma}(\Gamma,N) \cong N$ as $\Gamma_0$-modules.
We obtain the following identities by the hom-tensor adjunction:
\begin{align*}
	\ext^n_{\Gamma}(\Gamma \otimes_{\Gamma_0} M,N) 
	\cong H^n \Hom_{\Gamma} (\Gamma \otimes_{\Gamma_0} P_{\bullet},N)
	\cong H^n \Hom_{\Gamma_0} (P_{\bullet} , \Hom_{\Gamma}(\Gamma,N))
	\cong \ext^n_{\Gamma_0}(M,N).
\end{align*}
Similarly we get the identity:
\begin{align*}
	\ext^n_{\Gamma}(\Gamma \otimes_{\Gamma_0} \Gamma_1 \otimes_{\Gamma_0} M,N) 
	&\cong \ext^n_{\Gamma_0}(\Gamma_1 \otimes_{\Gamma_0} M, N).
\end{align*}
Apply those identities to the long exact sequence obtained by applying $\Hom_{\Gamma}(-,N)$ to the exact sequence (\ref{eq:std-seq}). 
Since $\Gamma_0$ is hereditary we have 
\[
\ext^n_{\Gamma_0}(M,N)=0=\ext^n_{\Gamma_0}(\Gamma_1 \otimes_{\Gamma_0} M, N), \quad n \geq 2,
\] 
hence we have an exact sequence of the form stated.
\end{proof}

The exact sequence implies all $n$-th extensions vanish for $n > 2$, hence we get:

\begin{cor}
\label{cor:gl-dim}
The algebra $\NsQ$ has global dimension at most 2.
\end{cor}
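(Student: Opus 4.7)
The plan is to extend the long exact sequence of Proposition \ref{prp:les-ver} past the term $\ext^2_\Gamma(M,N) \to 0$ that appears in its statement, and then read off the vanishing of higher Exts. First I would recall how that sequence was obtained: applying $\Hom_\Gamma(-,N)$ to the standard short exact sequence
\[
0 \to \Gamma \otimes_{\Gamma_0} \Gamma_1 \otimes_{\Gamma_0} M \to \Gamma \otimes_{\Gamma_0} M \to M \to 0
\]
of (\ref{eq:std-seq}) produces a long exact sequence of $\ext^\bullet_\Gamma$-groups, and the hom-tensor adjunction — valid in every degree because $\Gamma$ is flat over $\Gamma_0$ — identifies $\ext^n_\Gamma(\Gamma \otimes_{\Gamma_0} X,N) \cong \ext^n_{\Gamma_0}(X,N)$ for any $\Gamma_0$-module $X$ and any $n \geq 0$.

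Running this identification in each cohomological degree produces, for every $n \geq 1$, an exact fragment
\[
\ext^{n-1}_{\Gamma_0}(\Gamma_1 \otimes_{\Gamma_0} M, N) \longrightarrow \ext^n_\Gamma(M,N) \longrightarrow \ext^n_{\Gamma_0}(M,N).
\]
As soon as $n \geq 3$ both flanking terms vanish, since $\Gamma_0$ is hereditary. Hence $\ext^n_\Gamma(M,N) = 0$ for every $n \geq 3$ and every pair of finite-dimensional modules $M, N$. Under the identification $\NsQ \cong \Gamma$ of Section \ref{subsub:tensor-Gamma}, this is precisely the assertion $\gldim \NsQ \leq 2$.

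There is no real obstacle: all ingredients are in place. The one point to check is that the adjunction identity $\ext^n_\Gamma(\Gamma \otimes_{\Gamma_0} X, N) \cong \ext^n_{\Gamma_0}(X, N)$ holds in every degree, but this is exactly the computation performed inside the proof of Proposition \ref{prp:les-ver}: one takes a $\Gamma_0$-projective resolution $P_\bullet \to X$, tensors with the flat right $\Gamma_0$-module $\Gamma$ to obtain a $\Gamma$-projective resolution of $\Gamma \otimes_{\Gamma_0} X$, and then applies tensor-hom adjunction termwise. The statement therefore falls out as a direct corollary of the machinery already developed.
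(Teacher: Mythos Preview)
Your proposal is correct and takes essentially the same approach as the paper. The paper simply remarks that the exact sequence of Proposition~\ref{prp:les-ver} forces all higher extensions to vanish; you have spelled out what that remark means, namely that continuing the long exact sequence beyond degree~$2$ and invoking $\ext^n_{\Gamma_0}=0$ for $n\geq 2$ (hereditariness of $\Gamma_0$) kills $\ext^n_\Gamma(M,N)$ for $n\geq 3$.
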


If $A$ is a finite-dimensional algebra of finite global dimension, then the Euler form for $A$-modules $M$ and $N$ is defined as 
\[ 
\langle M, N\rangle_A := \sum_{i=0}^{\infty } (-1)^i \dim_k \ext^i_A(M,N).
\]
It is well known that this gives a bilinear form on the Grothendieck group of $A \bil \modu$, i.e. it is determined by the dimension vectors of $M$ and $N$. 
We realised $\NsQ$ as the path algebra $k\Qs$ modulo the ideal $\mathfrak{I}$. 
The generators (\ref{eq:mesh2}) and (\ref{eq:mesh3}) of $\mathfrak{I}$ can be seen as extra arrows $(i_t \to j_t)$ for each $t = 1,\dots,s-1$ and $(i \to j) \in Q_1$, we denote those arrows by $\Qs_2$. 
Since $\NsQ$ has global dimension at most 2 a result of Bongartz \cite{B} implies the Euler form can be written as
\begin{align*}
	\euler{\dd}{\ee}_{\NsQ} = \sum_{i_t \in \Qs_0} \dd_{i_t} \ee_{i_t} 
	- \sum_{(i_t \to j_{u}) \in \Qs_1} \dd_{i_t}\ee_{j_{u}} + \sum_{(i_t \to j_t) \in \Qs_2} \dd_{i_t}\ee_{j_t}.
\end{align*}

\subsection{Monomorphism categories and the category $\cN$}
\label{subsec:moncat}

We are interested in $\NsQ$-modules that are related to Richardson orbits.

\begin{defn}
The category $\cN$ is the full subcategory of $\NsQ \bil \fdmod$ given by $\Qs$ representations that satisfy the relations (\ref{eq:mesh2}) and (\ref{eq:mesh3}) with the property that all maps corresponding to vertical arrows are injective.
\end{defn}

Let $T_s(Q)$ denote the algebra of lower triangular $s \times s$ matrices with coefficients in the path algebra $kQ$.
The category $T_s(Q) \bil \fdmod$ is equivalent to the following category.
It has as objects $s$-tuples of $kQ$-modules $M = (M_1,\dots,M_s)$ along with 
$kQ$-module homomorphisms $\phi_t: M_t \rightarrow M_{t+1}$ for $t=1,\dots,s-1$.
A morphisms $f:M \rightarrow M'$ is a tuple $(f_1,\dots,f_s)$ of morphisms of $Q$-representations compatible with the maps $\phi_t$ and $\phi'_t$.

The \emph{monomorphism category} $\mathrm{mon}_s(Q)$ is the full subcategory of objects such that $\phi_1,\dots,\phi_{s-1}$ are all monomorphisms.
These monomorphism categories coincide with those in \cite{XZZ}. They generalise the better known submodule categories, whose studies essentially go back to Birkhoff \cite{Bi}, although the name is newer.
The subcategory $\cN$ can be considered as a nilpotent analogue to the monomorphism categories 

For a path $\alpha$ in $kQ$ we let $\alpha(q,t)$ denote the matrix in $T_s(Q)$ that has $\alpha$ in coordinate $(q,t)$ and all other coordinates zero.
\begin{lemma}
There is a ring homomorphism $\Phi \colon T_s(Q) \rightarrow N_s(Q)$, determined by the following data: 

\begin{align*}
\Phi(a(t,t)) &= \begin{cases} 0, \; \; \; \quad \quad \quad & t=1,  \\ 
b(j_{t-1}) a_t,                  \; \; \;\quad \quad \quad & t=2,\dots,s. 
\end{cases}
\quad \forall (a \colon i \to j) \in Q_1, \\
\Phi(e_i(q,t)) &= 
\begin{cases} e(i_t),      \quad & q=t, \\
b(i_{q-1})\cdots b(i_t), \;\quad & q > t. \end{cases}
\quad \forall i \in Q_0.
\end{align*}
\end{lemma}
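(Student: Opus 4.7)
The plan is to present $T_s(Q)$ as a quiver algebra with relations and then verify that the given formulas respect every defining relation, yielding a well-defined ring homomorphism. Concretely, $T_s(Q)$ is the path algebra of the grid quiver with vertices $\{i_t : i \in Q_0,\ 1 \leq t \leq s\}$, horizontal arrows $a(t,t) \colon i_t \to j_t$ for each $(a \colon i \to j) \in Q_1$ and every $t$, and vertical arrows $e_i(t+1,t) \colon i_t \to i_{t+1}$, subject only to the commuting-square relations
\[
	e_j(t+1,t) \cdot a(t,t) = a(t+1,t+1) \cdot e_i(t+1,t), \qquad (a \colon i \to j) \in Q_1,\ 1 \leq t \leq s-1.
\]
The formulas of the lemma specify $\Phi$ on these generators, with $\Phi(e_i(q,t))$ for $q > t+1$ automatically agreeing with the iterated product $\Phi(e_i(q,q-1)) \cdots \Phi(e_i(t+1,t))$. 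Source/target compatibility of the generators is immediate: for instance $b(j_{t-1}) a_t$ is a path from $i_t$ to $j_t$ in $\Qs$, matching the endpoints of $a(t,t)$, and similarly for $b(i_t) \colon i_t \to i_{t+1}$.

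It remains to check that each commuting-square relation goes to an equality in $\NsQ$ under $\Phi$. For $2 \leq t \leq s-1$, the left-hand side gives $\Phi(e_j(t+1,t)) \Phi(a(t,t)) = b(j_t) \cdot b(j_{t-1}) a_t$, and the right-hand side gives $\Phi(a(t+1,t+1)) \Phi(e_i(t+1,t)) = b(j_t) \cdot a_{t+1} b(i_t)$, which equals $b(j_t) \cdot b(j_{t-1}) a_t$ by relation (R1). For $t = 1$, the left-hand image is $b(j_1) \cdot 0 = 0$ by definition of $\Phi(a(1,1))$, while the right-hand image is $b(j_1) \cdot a_2 b(i_1)$, which vanishes by relation (R2). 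Hence the relation descends and $\Phi$ is a well-defined ring homomorphism.

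Since (R1) and (R2) were built into $\NsQ$ precisely to encode these commuting squares, there is no substantial obstacle: the argument is bookkeeping of indices, with the main technical check being the commuting-square verification above.
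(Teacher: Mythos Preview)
Your proof is correct. Both you and the paper verify that the assignment on generators respects the defining relations of $T_s(Q)$, but you organise this differently: you first identify $T_s(Q)$ with $kQ \otimes_k k\mathbb{A}_s$, presented as the path algebra of the grid quiver $Q \times \mathbb{A}_s$ modulo only the commuting-square relations, which reduces the entire check to the two cases you handle via (R1) and (R2). The paper instead works directly with the matrix description, taking the redundant generating set $\{a(t,t),\, e_i(q,t) : q \geq t\}$ and checking by hand the multiplicativity among the $e_i(q,t)$'s, the vanishing of $\Phi(a(t,t))\Phi(a'(q,q))$ when $aa'=0$ or $t \neq q$, and finally the commuting-square relation. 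Your route is cleaner and shorter; its only cost is that you are invoking the presentation of a tensor product of path algebras as a quiver with relations, which is standard but is asserted rather than justified. The paper's approach avoids relying on that background fact at the price of more bookkeeping.
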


\begin{proof}
Clearly the elements whose value is determined above generate $T_s(Q)$ as a $k$-algebra, so there is at most one $k$-algebra homomorphism satisfying those conditions.
It is easy to check the homomorphism conditions on the generators $e_i(q,t)$, namely
\begin{align*}
	\Phi(e_i(q,t)) \Phi(e_j(q',t') ) 
	&= b(i_{q-1})\cdots b(i_t) \cdot b(j_{q'-1})\cdots b(j_{t'})\\
	&= \delta_{ij} \delta_{q't} b(i_{q-1}) \cdots b(j_{t'}) \\
	&= \Phi( e_i(q,t) e_j(q',t') ).
\end{align*}

We also have to show that the relations of $T_s(Q)$ are sent to zero.
Firstly, if $a \colon i \to j$ and $a' \colon i' \to j'$ are arrows of $Q_1$ and if $a a' = 0$ or $t \neq q$, then $\Phi(a(t,t)) \Phi(a'(q,q)) = b(j_{t-1})a_t b(j'_{q-1})a'_q = 0$.
Finally observe that
\begin{align*}
	\Phi( a(t,t) e_i(t,t\!- \! 1) ) - \Phi(e_j(t,t\!- \! 1) a(t \!- \!1,t \!- \! 1)) 
	&=  b(j_{t-1}) a_t b(i_{t-1}) -  b(j_{t-1}) b(j_{t-2}) a_{t-1}\\
    &= b(j_{t-1}) (a_t b(i_{t-1}) -  b(j_{t-2}) a_{t-1}) \in \mathfrak{I}.
\end{align*}
\end{proof}

There is a restriction functor $\Phi^*: N_s(Q) \bil \fdmod \rightarrow T_s(Q) \bil \fdmod$ induced by $\Phi$.

\begin{prp}
The functor $\Phi^*$ restricts to a fully faithful functor from $\cN$ to $\mathrm{mon}_s(Q)$.
\end{prp}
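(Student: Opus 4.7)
The statement has two components to verify: first, that for $M \in \cN$ the restricted module $\Phi^*(M)$ lies in $\mathrm{mon}_s(Q)$; and second, that the resulting functor $\Phi^* \colon \cN \to \mathrm{mon}_s(Q)$ is fully faithful. The plan is to dispatch the first quickly and then focus on fullness, which is the real content.

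For the essential-image condition, identify $\Phi^*(M)$ with the tuple $(M_t)_{t=1}^s$ where $M_t = \bigoplus_{i\in Q_0} e(i_t)M$, the connecting map $\phi_t \colon M_t \to M_{t+1}$ is induced by $\Phi(e_i(t+1,t)) = b(i_t)$, and the $kQ$-action on $M_t$ is given by $\Phi(a(t,t))$ (zero for $t=1$ and $b(j_{t-1})a_t$ for $t\geq 2$). The $T_s(Q)$-module structure is automatic from the preceding lemma. Because $M \in \cN$ requires each vertical arrow to act injectively on $M$, each $\phi_t$ is a monomorphism, so $\Phi^*(M) \in \mathrm{mon}_s(Q)$. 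Faithfulness of $\Phi^*$ is automatic, since restriction along a ring homomorphism preserves underlying $k$-linear maps between modules.

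The main step is fullness. Given $M, N \in \cN$ and a $T_s(Q)$-module homomorphism $f \colon \Phi^*(M) \to \Phi^*(N)$, I would show directly that $f$ commutes with every generator of $N_s(Q)$. Commutation with the vertical arrows $b(i_t)$ is automatic because $b(i_t) = \Phi(e_i(t+1,t))$; the difficulty, and the main obstacle, is extracting commutation with each diagonal arrow $a_t$ when $\Phi$ only detects $a_t$ through the composite $b(j_{t-1})a_t$. Here the hypothesis defining $\cN$ is indispensable: for $m \in e(i_t)M$ and an arrow $a \colon i\to j$ of $Q$, combining the facts that $f$ commutes with $\Phi(a(t,t)) = b(j_{t-1})a_t$ and with $\Phi(e_j(t,t-1)) = b(j_{t-1})$ yields
\[
b(j_{t-1})\,f(a_t m) \;=\; b(j_{t-1})\,a_t\,f(m),
\]
and since $b(j_{t-1})$ acts injectively on $N$ (as $N \in \cN$), cancellation gives $f(a_t m) = a_t f(m)$. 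This recovers commutation with every diagonal arrow and completes the verification that $f$ is an $N_s(Q)$-module homomorphism. The injectivity of vertical maps on the target is exactly the feature that makes the cancellation legitimate and is the only nontrivial input.
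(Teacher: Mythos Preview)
Your proof is correct and follows essentially the same approach as the paper: faithfulness is immediate from restriction, the essential image lies in $\mathrm{mon}_s(Q)$ because vertical arrows act injectively, and fullness is obtained by the same cancellation argument---using that $f$ commutes with $b(j_{t-1})a_t$ and with $b(j_{t-1})$, together with injectivity of $b(j_{t-1})$ on the target, to deduce $f(a_t m) = a_t f(m)$.
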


\begin{proof}
The functor $\Phi^*$ is faithful because it is a restriction functor.
Let $N$ be an object of $\cN$.
Then $x \mapsto \Phi(e_i(q,t)) x = b(i_{q-1})\cdots b(i_t) x$ is an injective linear map $e_{i_t}N \to e_{i_q}N$ for all $i \in Q_0$ and $1 \leq t < q \leq s$ because $N \in \cN$. 
But we can characterise $\mathrm{mon}_s(Q)$ as the full subcategory of $T_s(Q) \bil \fdmod$ with objects $M$ such that 
$x \mapsto e_i(q,t)x$ is a monomorphism of vector spaces $e_{i}(q,q)M \rightarrow e_i(t,t)M$ for all $i \in Q_0$ and $1 \leq t \leq q \leq s$, thus $\Phi^*(N) \in \mathrm{mon}_s(Q)$.

Let $N,N' \in \cN$ and let  $f \in \Hom_{T_s(Q)}(\Phi^*(N),\Phi^*(N'))$.
We know $f$ is compatible with multiplication with all elements in the image of $\Phi$.
The image contains all the trivial paths and all paths that have only vertical arrows.
It only remains to show that $f$ is compatible with $a_t$ for all $a \in Q_1$ and $t=2,\dots,s$.
Let $m \in N$, since $\Phi(a(t,t)) = b(j_{t-1}) a_t$ we know
\[
	b(j_{t-1}) a_t \cdot f(m) = f(b(j_{t-1}) a_t \cdot m) = b(j_{t-1}) f(a_t \cdot m).
\]
But multiplication by $b(j_{t-1})$ on $e_{i_t}N'$ is a monomorphism of vector spaces by assumption, which implies $f(a_t \cdot m) =  a_t \cdot f(m)$. 
Thus $f$ is compatible with multiplication by a set of generators of $N_s(Q)$.
\end{proof}

\begin{remark}
The essential image of the restriction of $\Phi^*$ to $\cN$ can be characterised as the full subcategory of objects $M$ in $\mathrm{mon}_s(Q)$ such that the quotient $M_{t+1}/M_t$ is a semi-simple $kQ$-module for $t=1,\dots,s-1$.
\end{remark}







\subsubsection{The category $\cN$ and tensor algebra structure}
\label{subsub:N and tensor algebra}

We give other characterisations of the modules in $\cN$. For a tensor algebra $\Gamma =T_{\Gamma_0 } (\Gamma_1 )$ the category of left $\Gamma $-modules is equivalent to a category with objects pairs $(M, \varphi )$, where $M$ is a $\Gamma_0$-module and $\varphi \colon \Gamma_1\otimes_{\Gamma_0} M \to M$ is a $\Gamma_0$-linear map. 
The morphisms from $(M, \varphi ) $ to $(N, \psi )$ in this category are given by a $\Gamma_0$-homomorphism $f\colon M\to N$ such that the following diagram commutes 
\[ 
\begin{tikzcd} 
\Gamma_1\otimes_{\Gamma_0} M \ar[rr,"\varphi"] \ar{d}[swap]{\idt \otimes f} && M \ar[d,"f"] \\
\Gamma_1\otimes_{\Gamma_0} N \ar[rr,"\psi"] && N.
\end{tikzcd}
\]
We denote this category by $(\Gamma_0,\Gamma_1) \bil \fdmod$.
The equivalence is given by restricting the $\Gamma $-module structure on $M$ to the $\Gamma_0$-module structure, which we denote by ${}_{\Gamma_0} M$, and restricting the scalar multiplication $\Gamma\otimes_{\Gamma}M\to M$ to $\Gamma_1 \otimes_{\Gamma} M$, to obtain the map $\varphi $. 

Recall that $\NsQ$ arises as a tensor algebra in two different ways, as $\Lambda$ (cf. Section \ref{subsub:tensor-Lambda}) and as $\Gamma$ (cf. Section \ref{subsub:tensor-Gamma}). Here we use the latter construction.

\begin{prp}
\label{prp:N=monos}
The full subcategory of pairs $(M,\varphi)$  in $(\Gamma_0,\Gamma_1) \bil \fdmod$  such that $\varphi$ is a monomorphism corresponds to $\cN$ under the equivalence above.
\end{prp}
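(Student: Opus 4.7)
The plan is to unwind the tensor-algebra equivalence applied to $\Gamma = T_{\Gamma_0}(\Gamma_1)$ and to identify $\varphi$ componentwise with the actions of the individual vertical arrows of $\Qs$ on $M$. Under the equivalence, a $\Gamma$-module $M$ corresponds to the pair $({}_{\Gamma_0}M,\varphi)$ where $\varphi$ is the restriction of the scalar multiplication $\Gamma\otimes M\to M$ to $\Gamma_1\otimes_{\Gamma_0} M$; thus $\varphi$ records exactly the action of the degree-one part of $\Gamma$, which is generated by the vertical arrows.

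The first step is to decompose $\Gamma_1$ as a right $\Gamma_0$-module. The standard basis of $\NsQ$ described in Section \ref{subsec:staircase quiver} shows that every basis element of $\Gamma_1$ is uniquely of the form $b(i_t)\cdot\alpha$ for some vertical arrow $b(i_t)\colon i_t\to i_{t+1}$ and some diagonal path $\alpha$ whose target is $i_t$. Grouping by the initial vertical arrow gives a direct-sum decomposition of right $\Gamma_0$-modules
\[
\Gamma_1 \;=\; \bigoplus_{b(i_t)} b(i_t)\cdot e_{i_t}\Gamma_0,
\]
with left multiplication by $b(i_t)$ providing an isomorphism $e_{i_t}\Gamma_0 \xrightarrow{\;\sim\;} b(i_t)\cdot e_{i_t}\Gamma_0$. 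Tensoring with $M$ and using the canonical isomorphism $e_{i_t}\Gamma_0\otimes_{\Gamma_0} M \cong e_{i_t}M$ yields
\[
\Gamma_1 \otimes_{\Gamma_0} M \;\cong\; \bigoplus_{b(i_t)} e_{i_t}M,
\]
and one checks immediately that the restriction of $\varphi$ to the summand indexed by $b(i_t)$ is the map $e_{i_t}M\to e_{i_{t+1}}M$, $m\mapsto b(i_t)\cdot m$.

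The final step is the observation that distinct vertical arrows of $\Qs$ have distinct targets: the only vertical arrow with target $i_{t+1}$ is $b(i_t)$. Consequently the images under $\varphi$ of the different summands of $\Gamma_1\otimes_{\Gamma_0} M$ lie in pairwise distinct vertex-components $e_{i_{t+1}}M$ of ${}_{\Gamma_0}M$, so $\varphi$ is injective if and only if each of its components $m\mapsto b(i_t)\cdot m$ is injective. Since the latter is exactly the condition defining $\cN$, the proposition follows.

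The only point where the argument could slip is the bookkeeping of the right $\Gamma_0$-action on each summand of $\Gamma_1$ and the resulting tensor-product identification; once that is done the statement is immediate. Only the standard basis of Section \ref{subsec:staircase quiver} and the tensor-algebra description of $\Gamma$ from Section \ref{subsub:tensor-Gamma} are needed.
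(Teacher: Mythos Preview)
Your proof is correct and follows essentially the same approach as the paper: both identify $\varphi$ componentwise with the actions of the vertical arrows $b(i_t)$ and use that these components land in distinct vertex pieces of $M$. You are simply more explicit about the right $\Gamma_0$-module decomposition $\Gamma_1 = \bigoplus_{b(i_t)} b(i_t)\,e_{i_t}\Gamma_0$, which the paper's elementwise computation (``$b(i_t)\otimes m = b(i_t)\otimes e(i_t)m$, so $\varphi(b(i_t)\otimes m)=0$ iff $b(i_t)m=0$'') leaves implicit.
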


\begin{proof}
Let $(M,\varphi)$ be an object of $(\Gamma_0,\Gamma_1) \bil \fdmod$ and let $m \in M$.
Then $b(i_t)\otimes m = b(i_t) \otimes e(i_t)m$, 
so $\varphi(b(i_t) \otimes m) = b(i_t) e(i_t) \cdot m = 0$ if and only if $b(i_t) m= 0$.  
Thus $\varphi$ is injective if and only if the map $e(i_t) M \rightarrow e(i_{t+1})M$, given by left multiplication by $b(i_t)$, is injective for all $t = 1,\dots,s-1$ and all $i \in Q_0$.
This shows $(M,\varphi)$ corresponds to an object in $\cN$ if and only if $\varphi$ is a monomorphism.
\end{proof}


\subsubsection{Embedding monomorphism categories  in $\cN$}

Consider the following example.
Take the quiver $Q = \xymatrix{1 \ar[r] & 2 & \ar[l] 3}$. We can embed the quiver of $T_2(Q)$ into $\Qs$ as the full subquiver of bold dots in the following diagram: 
\[
\xymatrix{
\bullet \ar[dr] & \cdot & \ar[dl] \bullet \\
\bullet\ar[dr] \ar[u] \ar@{.}[r] & \bullet \ar[u]|{\alpha} & \ar@{.}[l] \ar[dl] \bullet \ar[u] \\
\times \ar@{.}[r] \ar[u] & \bullet \ar[u] & \times \ar@{.}[l] \ar[u]
}
\]
Observe that the commutativity relations in $Q^{(3)}$ give the commutativity relations of the subquiver of bold dots, so we get the quiver with relations of $T_2(Q)$. 
By adding the identity for the arrow labelled by $\alpha$, and placing the zero vector space on the vertices marked $\times$, one can view an object of $\mon_2(Q)$ as an object in $\cN \subset N_3(Q) \bil \modu$. 

More generally, if $Q$ is acyclic with all paths of length $\leq n$, then one can embed the monomorphism category $\mon_{s-n}(kQ)$ in $\cN \subset \NsQ \bil \fdmod$. 
On the geometrical side this means we can embed any quiver flag variety of an acyclic quiver in a fibre of the collapsing map $\pi_{\dd}$ for some dimension filtration $\dd$. 
Since every projective variety arises as a quiver Grassmannian of an acyclic quiver, this demonstrates that these fibres can be rather complicated,
and general questions about orbits in $\afb$ can be expected to be impossibly difficult.

Be aware that this embedding is not an inverse to the functor $\Phi^*$ in any sensible way.


\subsection{Quasi-hereditary structure}
\label{subsec:QH}

Let $A$ be a finite dimensional algebra and let $S(A)$ denote the set of isomorphism classes of simple left $A$-modules.
For $i \in S(A)$, we let $_AP(i)$ denote the projective cover of $i$, and $_AI(i)$ the injective envelope of $i$ in $A \bil \fdmod$.
We write $P(i)$ and $I(i)$ when it is clear in which category we take projective cover or injective envelope.
We let $S(A)_A$ denote the simple right $A$-modules up to isomorphism.
For $i \in S(A)_A$,
we let $P(i)_A$ denote the projective cover and $I(i)_A$ the injective envelope in $A \bil \modu$.


\subsubsection{Strongly quasi-hereditary algebras}

Let us recall the notion of a \emph{left-strongly hereditary} algebra as introduced in \cite[Section 4]{Ri4}.
We say $A$ has a left strongly quasi-hereditary structure if there is a layer function $\tell: S(A) \rightarrow \N$ such that for any simple $i$ there is an exact sequence
\[
\begin{tikzcd}
0 \rar & P_1 \rar & P(i) \rar & \Delta(i) \rar & 0,
\end{tikzcd}
\]
satisfying the following properties:
\begin{itemize}
\item[(LS1)] 
The module $P_1$ is projective and all indecomposable direct summands $P(j)$ of $P_1$ are such that $\tell(j) < \tell(i)$.
\item[(LS2)]
If $j$ is a composition factor of the radical of $\Delta(i)$, then $\tell(j) > \tell(i)$.
\end{itemize}
We call the modules $\Delta(i)$ the \emph{standard modules} of $A$. 

Dually, for each $i \in S(A)$ we let $\nabla(i)$ denote the maximal submodule of $I(i)$ such that for all of its composition factors $j \in S(A)$, we have $\tell(j) > \tell(i)$ or $j = i$. We call the modules $\nabla (i)$ the \emph{costandard modules} of $A$. 

\begin{rem}
Notice that we use a different orientation on the layer function than \cite{Ri4}.
\end{rem}

It is shown in \cite[Section 4]{Ri4} that the standard modules are in fact standard modules of a quasi-hereditary structure on $A$.
In fact a left strongly quasi-hereditary structure is equivalent to a quasi-hereditary structure such that all the standard modules have projective dimension at most one.
In this case the standard module $\Delta(i)$ can be characterised as the maximal factor module of $P(i)$ that satisfies condition (LS2).
Similarly a quasi-hereditary algebra is \emph{right strongly quasi-hereditary} if all the costandard modules have injective dimension at most one.
We denote by $\cF (\Delta )$ (resp. $\cF (\nabla )$) the full subcategory of $A \bil \fdmod$ of objects that have a filtration of standard (resp. costandard) modules.
The \emph{characteristic tilting module} of a quasi-hereditary structure is the unique basic module $T$ such that $\add(T) = \cF(\nabla) \cap \cF(\Delta)$. 

In \cite{Co}, Conde introduced the notion of \emph{right ultra strongly quasi-hereditary} algebras.
Those are quasi-hereditary algebras that satisfy the conditions
\begin{itemize}
\item[(US1)] 
$\rad(\Delta(i))$ is either a standard module or zero.
\item[(US2)] 
If $\rad(\Delta(i)) =0$, then $I(i) \in \cF(\Delta)$.
\end{itemize}
These algebras are in particular right strongly quasi-hereditary.


\subsubsection{Quasi-hereditary structure of the nilpotent quiver algebra}
\label{subsub:QH}

Keep in mind that the categories $\Gamma \bil \fdmod, \Lambda \bil \fdmod$ and $\NsQ \bil \fdmod$ are all equivalent.
The sets $S(\Gamma),S(\Lambda),S(\Gamma_0),S(\Lambda_0),$ 
$S(\Gamma)_{\Gamma},S(\Lambda)_{\Lambda},S(\Gamma_0)_{\Gamma_0}$ and $S(\Lambda_0)_{\Lambda_0}$ are all in a canonical bijection with the vertices of $\Qs$.
Accordingly, if $A = \Gamma,\Gamma_0,\Lambda$ or $\Lambda_0$, we write $_AS(i_t)$ (resp. $S(i_t)_A$) for the simple left (resp. right) $A$-module corresponding to $i_t$, and $_AP(i_t),{_AI}(i_t)$ (resp. $P(i_t)_A,I(i_t)_A$) for the projective cover and injective envelope of that simple.
We write $P(i_t) = {_{\Lambda}P}(i_t) \cong {_{\Gamma}P}(i_t)$ and similarly $I(i_t) = {_{\Lambda}I}(i_t) \cong {_{\Gamma}I}(i_t)$ as our default setting.
Let us consider the layer function
\[
\tell \colon S(\NsQ) \to \N, \quad \tell(i_t) := \tell(S(i_t)) := t.
\]

If $e \in \NsQ$ is an idempotent, we denote the two-sided ideal generated by $e$ by $(e)$.
For $i \in Q_0$ we define $\Span(i) := \{ (a \colon i \to j) \in Q_1\}$, and dually $\Cospan(i) := \{ (a \colon j \to i) \in Q_1\}$. 

\begin{lemma}
\label{lem:+tensor}
Let $t \in \{2,\dots,s\}$.
Then
\[
\tag{Syz}
\label{eq:PM1}
\Lambda_+ \otimes_{\Lambda_0} {_{\Lambda_0}P}(i_t) \cong \bigoplus_{\substack{(a \colon i \to j) \\ \in \Span(i)}} {_{\Lambda}P}(j_{t-1}).
\]

Also
\[
\tag{Cosyz}
\label{eq:PM2}
	P(i_{t})_{\Gamma_0} \otimes_{\Gamma_0} \Gamma_+ \cong P(i_{t-1})_{\Gamma}.
\]
Let $e_1 = \sum_{i \in Q_0} e(i_1)$,
there is a canonical inclusion $\iota \colon \fdmod \bil \Lambda/(e_1) \to \fdmod \bil \Lambda$. 
Then we have the following identity of right $\Lambda$-modules.
\[
\tag{Emb}
\label{eq:PM3}
	P(i_s)_{\Lambda_0} \otimes_{\Lambda_0} \Lambda_+ \cong \bigoplus_{\substack{(a \colon j \to i) \\ \in \Cospan(i)}} \iota (P(j_s)_{\Lambda/(e_1)}).
\]
\end{lemma}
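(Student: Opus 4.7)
The plan is to treat all three isomorphisms in a uniform way: first identify each tensor product with an explicit one-sided ideal of $\NsQ$ via the standard identifications $M\otimes_{A_0}A_0 e\cong Me$ and $eA_0\otimes_{A_0}N\cong eN$, and then decompose that ideal using the uniqueness of the standard form $\beta\alpha$ of its basis elements.

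For (\ref{eq:PM1}), we have $\Lambda_+\otimes_{\Lambda_0}{_{\Lambda_0}P}(i_t)\cong\Lambda_+ e(i_t)$ as left $\Lambda$-modules. Since $t\geq 2$, any standard basis element $\beta\alpha\in\Lambda_+ e(i_t)$ has $\alpha$ a nontrivial diagonal path starting at $i_t$, so its rightmost (first-applied) arrow is some $a_t\colon i_t\to j_{t-1}$ with $(a\colon i\to j)\in\Span(i)$; writing $\alpha=\alpha' a_t$ gives the decomposition $\Lambda_+ e(i_t)=\bigoplus_a\Lambda e(j_{t-1})\cdot a_t$, and right multiplication by $a_t$ is a left-$\Lambda$ isomorphism $\Lambda e(j_{t-1})\xrightarrow{\sim}\Lambda e(j_{t-1})\cdot a_t$ because it is a bijection on standard bases. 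The proof of (\ref{eq:PM2}) is formally dual: $P(i_t)_{\Gamma_0}\otimes_{\Gamma_0}\Gamma_+\cong e(i_t)\Gamma_+$, every standard basis element $\beta\alpha\in e(i_t)\Gamma_+$ has $\beta$ nontrivial ending at $i_t$ whose leftmost (last-applied) vertical arrow is forced to be $b(i_{t-1})$, and left multiplication by $b(i_{t-1})$ yields the right-$\Gamma$ isomorphism $P(i_{t-1})_\Gamma=e(i_{t-1})\Gamma\xrightarrow{\sim}e(i_t)\Gamma_+$.

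For (\ref{eq:PM3}), the key preliminary is the vanishing $\Lambda_+ e_1=0$: a path starting at a layer-$1$ vertex $k_1$ written in standard form $\beta\alpha$ has $\alpha$ starting at $k_1$, but there are no diagonal arrows out of layer $1$, so $\alpha$ is trivial and the path is purely vertical, hence in $\Lambda_0$. Consequently $e(i_s)\Lambda_+\cong P(i_s)_{\Lambda_0}\otimes_{\Lambda_0}\Lambda_+$ is naturally a right $\Lambda/(e_1)$-module, so for each $(a\colon j\to i)\in\Cospan(i)$ the right-$\Lambda$ map $f_a\colon P(j_s)_\Lambda\to e(i_s)\Lambda_+$, $e(j_s)\mapsto b(i_{s-1})a_s$, factors through $\iota(P(j_s)_{\Lambda/(e_1)})$. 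To check that $F=\bigoplus_a f_a$ is an isomorphism, one applies the mesh relation (\ref{eq:mesh2}) iteratively $s-v$ times to obtain
\[
a_s\cdot b(j_{s-1})b(j_{s-2})\cdots b(j_v)=b(i_{s-2})\cdots b(i_{v-1})a_v
\]
for each $v\in\{2,\ldots,s\}$. This sends the standard basis element $\beta\alpha$ of the $a$-summand of the source (with $\beta=b(j_{s-1})\cdots b(j_v)$ and $\alpha$ diagonal ending at $j_v$) to the standard basis element $b(i_{s-1})\cdots b(i_{v-1})(a_v\alpha)$ of $e(i_s)\Lambda_+$, and the assignment is bijective on standard bases because the leftmost diagonal arrow $a_v$ of the image simultaneously records the summand index $a$ and the layer $v$.

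The main obstacle is (\ref{eq:PM3}): one must verify the descent $\Lambda_+ e_1=0$ separately, and the iterated use of (\ref{eq:mesh2}) crucially requires every intermediate index to stay $\geq 2$ so that the relation (\ref{eq:mesh3}) never intervenes — this is precisely why the source has to be restricted to $\Lambda/(e_1)$-modules. The other two isomorphisms are essentially immediate from the uniqueness of the standard form.
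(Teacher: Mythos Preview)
Your argument is correct. The route differs from the paper's in one organisational respect: the paper exploits the tensor–algebra factorisation $\Lambda_+\cong\Lambda\otimes_{\Lambda_0}\Lambda_1$ (and $\Gamma_+\cong\Gamma_1\otimes_{\Gamma_0}\Gamma$) to reduce each identity to a statement about the degree–one bimodule, which is tiny and can be read off immediately; the desired isomorphism then follows by applying $\Lambda\otimes_{\Lambda_0}-$ (resp.\ $-\otimes_{\Gamma_0}\Gamma$, $-\otimes_{\Lambda_0}\Lambda$). You instead identify the full one–sided ideal $\Lambda_+e(i_t)$, $e(i_t)\Gamma_+$, $e(i_s)\Lambda_+$ in one step via the standard basis. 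For (\ref{eq:PM1}) and (\ref{eq:PM2}) the two approaches are nearly indistinguishable. For (\ref{eq:PM3}) the paper's reduction is cleaner: it only needs the right $\Lambda_0$–module identification $e(i_s)\Lambda_1\cong\bigoplus_{a\in\Cospan(i)}e(j_s)\Lambda_0/(e_1)$, where the mesh relations enter just once (to describe the right action of a single vertical arrow), and then tensors up; your direct approach forces the iterated use of (\ref{eq:mesh2}) to rewrite $a_s\,b(j_{s-1})\cdots b(j_v)$ in standard form. The trade–off is that your method is self–contained and makes the bijection on bases completely explicit, while the paper's is shorter and highlights that all three identities are formal consequences of the degree–one data.
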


\begin{proof}
The first identity (\ref{eq:PM1}) can be seen from the following identities of $\Lambda_0$-modules.
\begin{align*}
	\Lambda_1 \otimes_{\Lambda_0} {_{\Lambda_0}P}(i_t)
	&= \bigoplus_{\substack{(a \colon i \to j) \\ \in \Span(i)}} \Lambda_0 a_{t-1}  e(i_t) 
	= \bigoplus_{\substack{(a \colon i \to j) \\ \in \Span(i)}} \Lambda_0 e(j_{t-1}) = \bigoplus_{\substack{(a \colon i \to j) \\ \in \Span(i)}} {_{\Lambda_0}P}(j_{t-1}).
\end{align*}
Since $\Lambda_+ = \Lambda \otimes_{\Lambda_0} \Lambda_1$ we get the first identity from the statement applying $\Lambda \otimes_{\Lambda_0} -$ to the identity above.

In a similar way we get
\begin{align*}
P(i_{t})_{\Gamma_0} \otimes_{\Gamma_0} \Gamma_1 
\cong e(i_t) \Gamma_0 \otimes_{\Gamma_0} \Gamma_1
\cong e(i_t) \Gamma_1
\cong b(i_{t-1}) \Gamma_0 \cong P(i_{t-1})_{\Gamma_0}.
\end{align*}
Applying $- \otimes_{\Gamma_0} \Gamma$ to this gives the identity (\ref{eq:PM2}).

Finally we prove (\ref{eq:PM3}): First observe that we have the following identity of right $\Lambda_0$-modules.
\[
e(i_{s})\Lambda_1 \cong \bigoplus_{\substack{(a \colon j \to i) \\ \in \Cospan(i)}} e(j_{s}) \Lambda_0/(e_1).
\]
That gives the following isomorphisms of right $\Lambda$-modules:
\begin{align*}
P(i_s)_{\Lambda_0} \otimes_{\Lambda_0} \Lambda_+ &\cong e(i_s) \Lambda_1 \otimes_{\Lambda_0} \Lambda
\cong \bigoplus_{\substack{(a \colon j \to i) \\ \in \Cospan(i)}} e(j_{s}) \Lambda_0/(e_1) \otimes_{\Lambda_0} \Lambda
\cong \bigoplus_{\substack{(a \colon j \to i) \\ \in \Cospan(i)}} \iota (P(j_s)_{\Lambda/(e_1)}).
\end{align*}


\end{proof}

Let $\Delta(i_t)$ denote the maximal factor module of $P(i_t)$ such that all the composition factors $S(j_u)$ of $\rad \Delta(i_t)$ have layer $\tell(j_u) > t$.

\begin{prp}
\label{prp:seqS1S2}
Let $t \in \{ 2,\dots,s\}$.
There is an exact sequence
\[
\label{seq:proj} \tag{Res}
0 \longar  \bigoplus_{\substack{(a \colon i \to j) \\ \in \Span(i)}} P(j_{t-1}) \longar   P(i_t) \longar  \Delta(i_t) \longar  0
\]
of $\NsQ$-modules. This sequence is a projective resolution of $\Delta(i_t)$ and satisfies $(LS1)$ and $(LS2)$. 
Also, the costandard modules have the following injective coresolution.
\[ \label{seq:inj} \tag{Cores}
\begin{tikzcd}
0 \rar & \nabla(i_t) \rar & I(i_t) \rar & I(i_{t-1}) \rar & 0.
\end{tikzcd}
\]
\end{prp}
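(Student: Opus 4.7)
The plan is to derive both sequences as instances of the standard sequence (\ref{eq:std-seq}) and its dual (\ref{eq:std-seq2}), applied respectively to the two tensor algebra structures on $\NsQ$. For the projective resolution we work with $\Lambda = T_{\Lambda_0}\Lambda_1$, and for the injective coresolution with $\Gamma = T_{\Gamma_0}\Gamma_1$. In both cases the key input will be the identifications from Lemma \ref{lem:+tensor}.

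First I would set $M := {_{\Lambda_0}P}(i_t)$ and view it as a left $\Lambda$-module via the quotient $\Lambda \twoheadrightarrow \Lambda_0$. Since $M = \Lambda_0 e(i_t)$, we have $\Lambda \otimes_{\Lambda_0} M \cong \Lambda e(i_t) = P(i_t)$. Plugging into (\ref{eq:std-seq}) and applying the identity (\ref{eq:PM1}) of Lemma \ref{lem:+tensor} to rewrite the leftmost term yields exactly the sequence (\ref{seq:proj}), with cokernel equal to $M$ viewed as a $\Lambda$-module (equivalently, $\NsQ$-module with all diagonal arrows acting by zero). Since the summands $P(j_{t-1})$ are projective, this is automatically a projective resolution, establishing $\pdim \Delta(i_t) \leq 1$.

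Next I would identify this cokernel with $\Delta(i_t)$. On $M = {_{\Lambda_0}P}(i_t)$ the diagonal arrows act as zero, so the only composition factors are $S(i_u)$ for $u \geq t$, obtained from walking up the vertical $\A_s$-strand at vertex $i$. Its top is $S(i_t)$ and its radical has only composition factors $S(i_u)$ with $u > t$, so (LS2) holds for $M$. Conversely, any quotient $N$ of $P(i_t)$ satisfying (LS2) must kill each diagonal arrow $a_t \cdot e(i_t) \in \rad P(i_t)$, since this element lies in $e(j_{t-1})P(i_t)$ and $\tell(j_{t-1}) = t-1 < t$; hence $P(i_t) \to N$ factors through the quotient ${_{\Lambda_0}P}(i_t)$, proving maximality. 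Property (LS1) is immediate from the sequence, since the direct summands of the leftmost term are $P(j_{t-1})$ with $\tell(j_{t-1}) = t-1 < t$.

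For the costandard sequence I would dualise: take $M' := P(i_t)_{\Gamma_0}$ as a right $\Gamma_0$-module and apply (\ref{eq:std-seq2}) in the form for $\Gamma$. Then $D(M' \otimes_{\Gamma_0} \Gamma) \cong D(P(i_t)_\Gamma) = I(i_t)$, and by the identity (\ref{eq:PM2}) we have $D(M' \otimes_{\Gamma_0} \Gamma_+) \cong D(P(i_{t-1})_\Gamma) = I(i_{t-1})$. This yields the exact sequence
\[
0 \longrightarrow D(P(i_t)_{\Gamma_0}) \longrightarrow I(i_t) \longrightarrow I(i_{t-1}) \longrightarrow 0,
\]
so it remains to identify the kernel with $\nabla(i_t)$. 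The module $D(P(i_t)_{\Gamma_0})$ equals ${_{\Gamma_0}I}(i_t)$ with vertical arrows acting by zero; its composition factors are $S(k_u)$ for $u \geq t$ with a length-$(u-t)$ path from $k$ to $i$ in $Q$, so layer $t$ appears only at $S(i_t)$ (the socle) and all others have layer $>t$, giving the condition for $\nabla(i_t)$. For maximality, any submodule of $I(i_t)$ strictly containing $D(P(i_t)_{\Gamma_0})$ maps nonzero into $I(i_{t-1})$, whose simple socle is $S(i_{t-1})$ of layer $t-1<t$; hence it fails the defining condition.

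The main obstacle I anticipate is the bookkeeping in identifying the cokernel and kernel in the tensor-algebra standard sequences with $\Delta(i_t)$ and $\nabla(i_t)$ as defined by their maximality conditions, rather than simply verifying exactness and projectivity/injectivity. Once the composition factor structure of ${_{\Lambda_0}P}(i_t)$ and ${_{\Gamma_0}I}(i_t)$ is understood, the rest is a direct unravelling using that $\Lambda_0$ consists of $|Q_0|$ copies of $k\A_s$ (for the $\Delta$-side) and that $\Gamma_0$ is the path algebra of the diagonal subquiver (for the $\nabla$-side).
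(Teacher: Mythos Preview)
Your proposal is correct and follows essentially the same route as the paper: both apply the standard sequence (\ref{eq:std-seq}) for $\Lambda$ to the module ${_{\Lambda_0}P}(i_t)$ inflated along $\Lambda \twoheadrightarrow \Lambda_0$ (which the paper writes as $\Lambda/\Lambda_+ \otimes_{\Lambda_0} {_{\Lambda_0}P}(i_t)$), invoke (\ref{eq:PM1}) for the syzygy, and then identify the cokernel with $\Delta(i_t)$ via the two-sided layer comparison; the costandard side is handled identically via (\ref{eq:std-seq2}) for $\Gamma$ and (\ref{eq:PM2}). Your maximality arguments are slightly more explicit than the paper's (naming the diagonal generators $a_t e(i_t)$ and the socle $S(i_{t-1})$ of $I(i_{t-1})$), but the underlying logic is the same.
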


\begin{proof}
Apply the sequence (\ref{eq:std-seq}) to the $\Lambda$-module $\Lambda/\Lambda_+ \otimes_{\Lambda_0} {_{\Lambda_0}P}(i_t)$.
Using the identity (\ref{eq:PM1}) we get the sequence
\[
0	\longar \bigoplus_{\substack{(a \colon i \to j) \\ \in \Span(i)}} P(j_{t-1})  \longar P(i_t) \longar \Lambda/\Lambda_+ \otimes_{\Lambda_0} {_{\Lambda_0}P}(i_t) \longar 0.
\]
Now observe that all the composition factors of $\Lambda/\Lambda_+ \otimes_{\Lambda_0} {_{\Lambda_0}P}(i_t)$ have layer higher than or equal to $t$. Thus $\Lambda/\Lambda_+ \otimes_{\Lambda_0} {_{\Lambda_0}P}(i_t)$ is a factor module of $\Delta(i_t)$.
On the other hand the top of each summand of the kernel of the sequence has layer $t-1 < t$, hence $\Delta(i_t)$ is a factor module of $\Lambda/\Lambda_+ \otimes_{\Lambda_0} {_{\Lambda_0}P}(i_t)$. 
Together this shows $\Lambda/\Lambda_+ \otimes_{\Lambda_0} {_{\Lambda_0}P}(i_t) \cong \Delta(i_t)$.

Next we apply the sequence (\ref{eq:std-seq2}) to the right $\Gamma$-module $P(i_t)_{\Gamma_0} \otimes_{\Gamma_0} \Gamma/\Gamma_+ $. By the identity (\ref{eq:PM2}) we get the following sequence of left $\Gamma$-modules.
\[ 
\begin{tikzcd}
0 \rar & \kdual(P(i_t)_{\Gamma_0} \otimes_{\Gamma_0} \Gamma/\Gamma_+) \rar & I(i_t) \rar & I(i_{t-1}) \rar & 0.
\end{tikzcd}
\]
Now $\kdual( P(i_t)_{\Gamma_0} ) \cong {_{\Gamma_0}I}(i_t)$, 
and thus $\kdual(P(i_t)_{\Gamma_0} \otimes_{\Gamma_0} \Gamma/\Gamma_+) \cong \Gamma/\Gamma_+ \otimes_{\Gamma_0} {_{\Gamma_0}I}(i_t)$.
Observe that all composition factors of $\Gamma/\Gamma_+ \otimes_{\Gamma_0} {_{\Gamma_0}I}(i_t)$ have layer $\geq t$.
Moreover, any submodule of $I(i_t)$ that properly contains this submodule, would have $S(i_{t-1})$ as composition factor.
Together this shows that $\Gamma/\Gamma_+ \otimes_{\Gamma_0} {_{\Gamma_0}I}(i_t) \cong \nabla(i_t)$, the costandard module at $i_t$.
\end{proof}

Let $\add(\Delta)$ (resp. $\add(\nabla)$) denote the full subcategory of $\NsQ \bil \fdmod$ given by finite direct sums of standard modules (resp. costandard modules).

\begin{cor}
\label{cor:equivalence}
The algebra $\NsQ$ is left strongly quasi-hereditary.
Moreover there are equivalences of categories
\[
\begin{tikzcd}
\Lambda_0 \bil \proj \ar["\Lambda/\Lambda_+ \otimes_{\Lambda_0} -",rr,shift left=1ex] & & \ar["\res_{\Lambda_0}",ll,shift left=1ex] \add(\Delta), & & 
\Gamma_0 \bil \inj\ar["\Gamma/\Gamma_+ \otimes_{\Gamma_0} -",rr,shift left=1ex] & & \ar["\res_{\Gamma_0}",ll,shift left=1ex] \add(\nabla).
\end{tikzcd}
\]
Here $\res_{\Lambda_0}$ and $\res_{\Gamma_0}$ are the restriction functors induced by the inclusions $\Lambda_0 \to \Lambda$ and $\Gamma_0 \to \Gamma$ respectively.
\end{cor}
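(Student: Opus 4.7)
The plan is to verify the quasi-hereditary conditions directly from the projective resolution (\ref{seq:proj}), and to obtain the two equivalences by identifying $\Lambda/\Lambda_+ \otimes_{\Lambda_0} -$ and $\Gamma/\Gamma_+ \otimes_{\Gamma_0} -$ with pullback functors along the canonical surjections $\Lambda \twoheadrightarrow \Lambda/\Lambda_+$ and $\Gamma \twoheadrightarrow \Gamma/\Gamma_+$.

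For the quasi-hereditary structure, I would observe that the exact sequence (\ref{seq:proj}) immediately yields (LS1), since every summand of its kernel has the form $P(j_{t-1})$ with $\tell(j_{t-1}) = t-1 < t = \tell(i_t)$. For (LS2) I would exploit the isomorphism $\Delta(i_t) \cong \Lambda/\Lambda_+ \otimes_{\Lambda_0} {_{\Lambda_0}P}(i_t)$ established in the proof of Proposition \ref{prp:seqS1S2}. Because $\Lambda_0$ is a disjoint union of path algebras of equioriented $\A_s$, the composition factors of ${_{\Lambda_0}P}(i_t)$ are exactly $S(i_t), S(i_{t+1}), \dots, S(i_s)$; these are preserved by pullback along $\Lambda \twoheadrightarrow \Lambda/\Lambda_+$, so every composition factor of $\rad \Delta(i_t)$ has layer strictly greater than $t$, yielding (LS2). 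The boundary case $t=1$ is trivial because the kernel of the analogue of (\ref{seq:proj}) is zero, i.e.\ $\Delta(i_1) = P(i_1)$.

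For the two equivalences, the key observation is that the composition $\Lambda_0 \hookrightarrow \Lambda \twoheadrightarrow \Lambda/\Lambda_+$ is an isomorphism of algebras, since $\Lambda_+ \cap \Lambda_0 = 0$. Hence I may regard $\Lambda \twoheadrightarrow \Lambda/\Lambda_+$ as a surjective algebra homomorphism $\pi \colon \Lambda \to \Lambda_0$; under this identification, $\Lambda/\Lambda_+ \otimes_{\Lambda_0} -$ becomes naturally isomorphic to the pullback $\pi^{\ast}$, and the equality $\pi \circ (\Lambda_0 \hookrightarrow \Lambda) = \idt_{\Lambda_0}$ gives $\res_{\Lambda_0} \circ \pi^{\ast} = \idt$. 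Thus $\pi^{\ast}$ is fully faithful with quasi-inverse $\res_{\Lambda_0}$ on its essential image. By Proposition \ref{prp:seqS1S2}, $\pi^{\ast}({_{\Lambda_0}P}(i_t)) \cong \Delta(i_t)$, so $\pi^{\ast}$ sends the isomorphism classes of indecomposables in $\Lambda_0 \bil \proj$ bijectively to those of $\add(\Delta)$, giving the first equivalence. The second equivalence follows by the entirely parallel argument applied to $\Gamma \twoheadrightarrow \Gamma/\Gamma_+ \cong \Gamma_0$, together with the second half of Proposition \ref{prp:seqS1S2}, which identifies $\nabla(i_t)$ with the pullback of ${_{\Gamma_0}I}(i_t)$.

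I do not foresee a serious obstacle here: essentially every ingredient has already been assembled in Proposition \ref{prp:seqS1S2}, and the only extra input is the standard principle that pullback along a surjective ring map is fully faithful with restriction as a quasi-inverse on the essential image. The minor point worth writing out carefully will be the natural isomorphism $\Lambda/\Lambda_+ \otimes_{\Lambda_0} - \cong \pi^{\ast}$, but this is immediate from the bimodule identification $\Lambda/\Lambda_+ \cong \Lambda_0$ coming from $\Lambda_+ \cap \Lambda_0 = 0$.
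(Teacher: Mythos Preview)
Your proposal is correct and essentially coincides with what the paper has in mind: the corollary is stated in the paper without proof, immediately following Proposition~\ref{prp:seqS1S2}, and the left strongly quasi-hereditary claim is already contained in that proposition's statement that (\ref{seq:proj}) satisfies (LS1) and (LS2). Your derivation of the two equivalences via the identification $\Lambda/\Lambda_+ \cong \Lambda_0$ (resp.\ $\Gamma/\Gamma_+ \cong \Gamma_0$) and the standard fact that pullback along a surjective ring map is fully faithful is exactly the routine unpacking the paper leaves implicit, using the isomorphisms $\Delta(i_t) \cong \Lambda/\Lambda_+ \otimes_{\Lambda_0} {_{\Lambda_0}P}(i_t)$ and $\nabla(i_t) \cong \Gamma/\Gamma_+ \otimes_{\Gamma_0} {_{\Gamma_0}I}(i_t)$ established in the proof of Proposition~\ref{prp:seqS1S2}.
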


For $t=1,\dots,s$ we define $e_t := \sum_{i \in Q_0} e(i_t)$, and $E_t := \sum_{u=1}^t e_{u}$, and $E_0 := 0$.
Let us write $( E_t) $ for the two sided ideal of $\NsQ$ generated by the idempotent $E_t$.
For each $t$ we have the quotient $\NsQ \to \NsQ/ (E_t)$, observe that $\NsQ/ (E_t) \cong N_{s-t}(Q)$.
The quotient induces a fully faithful functor $\iota_t \colon \NsQ/ (E_t) \bil \fdmod \to \NsQ \bil \fdmod$.
For $t = 0,\dots,s-1$ we define $T(i_{t+1}) := \iota_{t} (  {_{N_{s-t}(Q)}I}(i_s))$.

\begin{prp}
\label{prp:char-tilt}
There are sequences 
\[
\label{seq:char} \tag{Filt}
0 \longar \Delta(i_{t}) \longar T(i_{t}) \longar \bigoplus_{\substack{(a \colon j \to i) \\ \in \Cospan(i)}} T(j_{t+1}) \longar 0.
\]
Also the characteristic tilting module for our quasi-hereditary structure on $\NsQ$ is
\[
	T := \bigoplus_{t=1}^s \bigoplus_{i \in Q_0} T(i_t).
\]
\end{prp}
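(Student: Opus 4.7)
The plan is to first construct the exact sequence \eqref{seq:char} directly by applying the dual standard sequence \eqref{eq:std-seq2} inside a quotient algebra, and then to identify $T=\bigoplus_{i,t} T(i_t)$ as the characteristic tilting module.

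Set $A := \NsQ/(E_{t-1}) \cong N_{s-t+1}(Q)$. The modules $\Delta(i_t)$, $T(i_t)$, and every $T(j_{t+1})$ have composition factors only in layers $\geq t$, hence are annihilated by $E_{t-1}$ and lie in the essential image of the fully faithful inflation $\iota_{t-1} \colon A\bil\fdmod \to \NsQ \bil \fdmod$. It therefore suffices to construct the sequence inside $A$ and then inflate. Since $A$ is itself of the form $N_{s'}(Q)$ with $s'=s-t+1$, all constructions of Section \ref{sec:nsq} apply verbatim; in particular $A$ has the $\Lambda$-type tensor algebra structure with $A_0 \cong (k\A_{s'})^{Q_0}$, and the identities \eqref{eq:PM1}, \eqref{eq:PM2}, and \eqref{eq:PM3} all hold for $A$ with bottom idempotent $e_t = \sum_{i \in Q_0} e(i_t)$.

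Apply \eqref{eq:std-seq2} for $A$ to the right $A$-module $M := P(i_s)_{A_0} \otimes_{A_0} A/A_+$. The left term $\kdual(M)$ equals ${_{A_0}I}(i_s)$ inflated to $A$; since each component of $A_0$ is the path algebra of a linear Dynkin quiver $\A_{s'}$, we have ${_{A_0}I}(i_s) \cong {_{A_0}P}(i_t)$. Using \eqref{eq:mesh2} and \eqref{eq:mesh3}, exactly as in the observation that $P(i_1)$ in $\NsQ$ is annihilated by all diagonal arrows, every path in $A$ starting at $i_t$ that uses a diagonal arrow vanishes (it is forced through a killed layer $< t$), so the diagonal arrows of $A$ act trivially on the inflated module and it coincides with $P_A(i_t)$; since $i_t$ lies in the minimal layer of $A$, $P_A(i_t) = \Delta_A(i_t)$. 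The middle term is $\kdual(e(i_s)A) \cong {_AI}(i_s)$, and the right term, by \eqref{eq:PM3} applied to $A$, equals $\bigoplus_{(a\colon j\to i)\in \Cospan(i)} \iota_A\bigl(P(j_s)_{A/(e_t)}\bigr)$ with $A/(e_t) \cong N_{s-t}(Q)$; dualizing yields $\bigoplus_{(a\colon j\to i)} \iota_A\bigl({_{N_{s-t}(Q)}I}(j_s)\bigr)$. Now apply $\iota_{t-1}$ to the whole sequence: using $\iota_{t-1}(\Delta_A(i_t))=\Delta(i_t)$ (by the universal property of standards modulo $E_{t-1}$), $\iota_{t-1}({_AI}(i_s)) = T(i_t)$ (by definition), and $\iota_{t-1}\circ \iota_A = \iota_t$ (transitivity of inflation), one obtains exactly \eqref{seq:char}.

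For the characteristic tilting module statement it suffices, by classical quasi-hereditary uniqueness, to show that each $T(i_t)$ is indecomposable and lies in $\cF(\Delta)\cap\cF(\nabla)$ with $\Delta$-filtration starting at $\Delta(i_t)$. Indecomposability holds since ${_AI}(i_s)$ is the indecomposable injective envelope of a simple and $\iota_{t-1}$ is fully faithful; membership in $\cF(\nabla)$ holds because ${_AI}(i_s)$ is injective in the quasi-hereditary algebra $A$ hence $\nabla_A$-filtered, and inflation carries each $\nabla_A(j_u)$ to $\nabla(j_u)$ for $u \geq t$; membership in $\cF(\Delta)$ follows by descending induction on $t$ from \eqref{seq:char}, starting with the base case $T(i_s)=\Delta(i_s)$ where the right-hand sum is empty. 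The main obstacle is the identification $\kdual(M) \cong \Delta_A(i_t)$ as $A$-modules, which requires carefully propagating \eqref{eq:mesh2} and \eqref{eq:mesh3} through the inflation to confirm that all diagonal arrows of $A$ act as zero on the bottom-layer projective; the remaining steps are routine tensor-algebra manipulation combined with standard facts from the theory of quasi-hereditary algebras.
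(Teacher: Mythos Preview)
Your proof is correct and follows essentially the same approach as the paper's: reduce to the quotient $\NsQ/(E_{t-1})\cong N_{s-t+1}(Q)$, apply the dual standard sequence \eqref{eq:std-seq2} for the $\Lambda$-grading to $P(i_s)_{\Lambda_0}\otimes_{\Lambda_0}\Lambda/\Lambda_+$, invoke \eqref{eq:PM3} for the cokernel, and identify the kernel with $\Delta(i_t)$ via ${_{\Lambda_0}I}(i_s)\cong{_{\Lambda_0}P}(i_t)$. The only difference is presentational: the paper phrases the reduction as ``assume $t=1$'' and then cites Proposition~\ref{prp:seqS1S2} directly for the identification $\Lambda/\Lambda_+\otimes_{\Lambda_0}{_{\Lambda_0}P}(i_1)\cong\Delta(i_1)$, whereas you keep $t$ general, work inside $A$, and re-derive that bottom-layer projectives equal their standard modules via the vanishing-of-diagonal-paths argument (which is exactly how one proves the $t=1$ case of \eqref{seq:proj} anyway).
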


\begin{proof}
All composition factors of $T(i_t),\Delta(i_t)$ and $T(j_{t+1})$ have layer $\geq t$. 
Thus we can work in the full subcategory $\NsQ/ (E_{t-1})$, so we assume $t=1$ without loss of generality.
Apply (\ref{eq:std-seq2}) to the right $\NsQ$-module $P(i_s)_{\Lambda_0} \otimes_{\Lambda_0} \Lambda/\Lambda_+$.
By the identity (\ref{eq:PM3}) we get the exact sequence
\[
0 \longar \Lambda/\Lambda_+ \otimes_{\Lambda_0} {_{\Lambda_0}I}(i_s) \longar I(i_s) \longar \bigoplus_{\substack{(a \colon j \to i) \\ \in \Cospan(i)}} \iota_1 ({_{\Lambda/(e_1)}I}(j_s)) \longar 0.
\]
But ${_{\Lambda_0}I}(i_s) \cong {_{\Lambda_0}P}(i_1)$, and since we assume $t=1$ the kernel of the sequence is isomorphic to  $\Delta(i_t)$.
Moreover, $I(i_s) = T(i_1)$ and $\iota_1 ({_{\Lambda/ ( e_1)}I}(j_s)) = T(j_2)$ by construction, hence this is the sequence above.
By induction with the sequence (\ref{seq:char}) we see the modules $T(i_t)$ are in $\cF(\Delta)$.

All injective modules are in $\cF(\nabla)$ in general.
Consequently, each of the modules $T(i_{t+1})$ has a filtration of $\nabla$-modules, coming from the $\nabla$-filtration of ${_{N_{s-t}(Q)}I}(i_s)$ in $\NsQ/ (E_t) \bil \fdmod$. 
To summarise, all the modules $T(i_t)$ are pairwise distinct, indecomposable, and belong to $\cF(\Delta) \cap \cF(\nabla)$.
That shows $T$ is the characteristic tilting module of $\NsQ$.
\end{proof}

For an algebra $A$ and $M \in A \bil \modu$ we define respectively the modules generated and cogenerated  by $M$ as
\begin{align*}
\gen(M) &:= \{ N \in \NsQ \mid \; \exists \text{ an epimorphism } M_0 \twoheadrightarrow N \text{ with } M_0 \in \add(M) \},\\
\cogen(M) &:= \{ N \in \NsQ \mid \; \exists \text{ a monomorphism } N \rightarrowtail M_0 \text{ with } M_0 \in \add(M) \}.
\end{align*}
We summarise the most important properties of $\NsQ$ and the subcategories $\cF(\Delta)$ and $\cF(\nabla)$ in Proposition \ref{prp:f-delta}.
Keep in mind that we have established two different $\Z$-gradings on $\NsQ$, producing the graded modules $\Lambda$ and $\Gamma$. Hence any $\NsQ$-module can equivalently be considered as an ungraded $\Lambda$- or $\Gamma$-module.
\begin{prp}
\label{prp:f-delta}
The algebra $\NsQ$ has a quasi-hereditary structure, uniquely determined by the layer function $\tell$, that makes it simultaneously left strongly quasi-hereditary and right ultra strongly quasi-hereditary.
In particular the category $\cF(\Delta)$ is closed under taking submodules and $\cF(\nabla)$ is closed under taking factor modules.
Moreover the following are equivalent for an $\NsQ$-module $M$.
\begin{enumerate}[$(a)$]
\item 
$M \in \cN$.
\item
$M \in \cogen(I)$.
\item
$M \in \cF(\Delta)$.
\item
The underlying $\Lambda_0$-module of $M$ is projective.
\item
For the corresponding $(\Gamma_0,\Gamma_1)$-module $(M,\varphi)$, the map $\varphi$ is a monomorphism.
\end{enumerate}
Also the following conditions are equivalent:
\begin{enumerate}[$(a')$]
\addtocounter{enumi}{1}
\item
$M \in \gen(I)$.
\item 
$M \in \cF(\nabla)$.
\item
The underlying $\Gamma_0$-module of $M$ is injective.
\end{enumerate}

\end{prp}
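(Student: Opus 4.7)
The plan is to verify the quasi-hereditary claims first, then the closure properties, then the two chains of equivalences. The left strongly quasi-hereditary structure already comes from Corollary~\ref{cor:equivalence}. For right ultra strong quasi-heredity I check Conde's conditions (US1) and (US2). For (US1), applying the right exact functor $\Lambda/\Lambda_+ \otimes_{\Lambda_0} -$ to the short exact sequence of $\Lambda_0$-modules $0 \to {_{\Lambda_0}P}(i_{t+1}) \to {_{\Lambda_0}P}(i_t) \to {_{\Lambda_0}S}(i_t) \to 0$, and using $\Delta(i_t) = \Lambda/\Lambda_+ \otimes_{\Lambda_0} {_{\Lambda_0}P}(i_t)$ from the proof of Proposition~\ref{prp:seqS1S2}, I obtain an exact sequence $\Delta(i_{t+1}) \to \Delta(i_t) \to S(i_t) \to 0$; a dimension count forces the first map to be injective, so $\rad \Delta(i_t) \cong \Delta(i_{t+1})$ for $t < s$ and $\rad \Delta(i_s) = 0$. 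For (US2), the radical vanishes precisely when $i = i_s$, and Proposition~\ref{prp:char-tilt} identifies $I(i_s) = T(i_1)$, which lies in $\cF(\Delta) \cap \cF(\nabla)$. Closure of $\cF(\Delta)$ under submodules and of $\cF(\nabla)$ under factor modules then follows from the general results of \cite{Ri4} and \cite{Co}.

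For the equivalences for $\cF(\Delta)$: $(a) \Leftrightarrow (e)$ is Proposition~\ref{prp:N=monos}. For $(c) \Rightarrow (d)$, each $\Delta(i_t)$ restricts to ${_{\Lambda_0}P}(i_t)$ by construction, and $\Lambda_0 \cong (k\A_s)^{|Q_0|}$ is hereditary, so extensions of projective $\Lambda_0$-modules remain projective. The harder direction $(d) \Rightarrow (c)$ I prove by induction on $s$: given $M$ with $M|_{\Lambda_0}$ projective, relation (\ref{eq:mesh3}) implies that every path in $\NsQ$ starting at the bottom-layer idempotent $e_1$ collapses to a string of vertical arrows, so the submodule $( e_1 ) M$ is just the $\Lambda_0$-submodule generated by $e_1 M$. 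Writing $M|_{\Lambda_0} \cong \bigoplus_{i,t} {_{\Lambda_0}P}(i_t)^{n_{i_t}}$ one computes $( e_1 ) M \cong \bigoplus_i {_{\Lambda_0}P}(i_1)^{n_{i_1}}$, on which diagonal arrows act trivially; Corollary~\ref{cor:equivalence} then identifies it with $\bigoplus_i \Delta(i_1)^{n_{i_1}}$. The quotient $M/( e_1 )M$ is naturally an $\NsQ/( e_1 ) \cong N_{s-1}(Q)$-module whose restriction to the corresponding $\Lambda_0'$ remains projective, so by induction it has a $\Delta$-filtration, which combined with the $\Delta(i_1)$-part yields a $\Delta$-filtration of $M$. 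The equivalence $(d) \Leftrightarrow (e)$ is an unravelling of the two tensor algebra presentations: $M|_{\Lambda_0}$ being a projective $(k\A_s)^{|Q_0|}$-module is precisely the condition that all vertical-arrow maps are injective, which is the monomorphism condition on the $\Gamma_1$-action map $\varphi$. Finally $(b) \Leftrightarrow (c)$ follows from the general principle that the characteristic tilting module $T$ cogenerates $\cF(\Delta)$ for a right ultra strongly quasi-hereditary algebra, combined with the identification $I(i_s) = T(i_1)$ and the filtration sequence~(\ref{seq:char}) that builds the remaining $T(i_t)$ out of the top-layer injectives.

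The equivalences $(b'),(c'),(d')$ for $\cF(\nabla)$ are proved dually by swapping the $\Lambda$- and $\Gamma$-presentations and applying the $k$-duality $\kdual$, which interchanges injectives with projectives and $\cF(\Delta)$ with $\cF(\nabla)$; in particular $(c') \Leftrightarrow (d')$ uses the analogue of Proposition~\ref{prp:les-ver} directly rather than through a secondary argument. The main obstacle is the inductive step $(d) \Rightarrow (c)$: identifying the correct submodule to peel off and verifying that the resulting quotient lives over $N_{s-1}(Q)$ and retains projectivity on $\Lambda_0'$-restriction. This hinges on relation~(\ref{eq:mesh3}), which is what forces $( e_1 ) M$ to behave as a pure $\Lambda_0$-submodule with trivial $\Lambda_+$-action and is the key combinatorial input making the induction close.
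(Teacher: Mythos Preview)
Your argument is essentially correct but takes a different, more laborious route than the paper for the key implications, and has one imprecise step.

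\textbf{Comparison with the paper.} The paper never proves $(d)\Rightarrow(c)$ directly. It first establishes $(a)\Leftrightarrow(b)$ by the elementary observation that $M\in\cN$ if and only if every simple summand of $\soc(M)$ is some $S(i_s)$, which is exactly the condition $M\in\cogen(I)$. Then $(b)\Leftrightarrow(c)$ follows from two closure properties: $\cogen(I)$ is closed under extensions (as $I$ is injective) and contains each $\Delta(i_t)$ (whose socle is $S(i_s)$), giving $\cF(\Delta)\subset\cogen(I)$; conversely $\cF(\Delta)$ is closed under submodules (by \cite[Lemma 4.1*]{DRi3}, since the $\nabla$'s have injective dimension $\leq 1$) and contains $I(i_s)=T(i_1)$, so $\cogen(I)\subset\cF(\Delta)$. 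After that, $(d)$ joins the chain for free via $(a)\Leftrightarrow(d)$, since a $(k\A_s)^{|Q_0|}$-module is projective precisely when all its structure maps are injective. Your induction on $s$ for $(d)\Rightarrow(c)$ is correct and self-contained, but it duplicates work: you already observe $(d)\Leftrightarrow(e)\Leftrightarrow(a)$, so once any link from $(a)$ to $(c)$ is in place the induction is unnecessary.

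\textbf{A gap in your $(b)\Leftrightarrow(c)$.} Your appeal to the dual Ringel characterisation $\cF(\Delta)=\cogen(T)$ is fine, but the step $\cogen(T)=\cogen(I)$ is not justified by the sequence~(\ref{seq:char}) alone: that sequence exhibits $T(i_t)$ as an extension of $\bigoplus T(j_{t+1})$ by $\Delta(i_t)$, which gives $T(i_t)\in\cogen(I)$ only once you know $\Delta(i_t)\in\cogen(I)$. This is true (its socle is $S(i_s)$) but you do not say it. More directly, each $T(i_t)$ is \emph{defined} as $\iota_{t-1}({_{N_{s-t+1}(Q)}I}(i_s))$, which embeds in $I(i_s)$ via the dual of the quotient $\NsQ\twoheadrightarrow\NsQ/(E_{t-1})$; this gives $T\in\cogen(I)$ in one line.

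\textbf{The dual equivalences.} Saying the $(b'),(c'),(d')$ case is ``proved dually via $\kdual$'' is too casual: $\kdual$ lands in right $\NsQ$-modules, and $\NsQ^{\op}\not\cong\NsQ$ in general, so you cannot literally transport the $\Delta$-side statements. The paper instead argues $(c')\Leftrightarrow(d')$ via Corollary~\ref{cor:equivalence} and $(b')\Leftrightarrow(c')$ by combining $\cF(\nabla)=\gen(T)$ from \cite{Ri4} with $\gen(T)=\gen(I)$, the latter following from the surjections $I(i_t)\twoheadrightarrow I(i_{t-1})$ in~(\ref{seq:inj}). You should make these steps explicit rather than invoking an unspecified duality.
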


\begin{proof}
Proposition \ref{prp:seqS1S2} already shows $\NsQ$ is left strongly quasi-hereditary.
By Corollary \ref{cor:equivalence} we see $\NsQ$ fulfils condition (US1). Moreover $\rad(\Delta(i_t)) =0$ if and only if $t=s$, but then $I(i_t) = T(i_1)$, and hence $\NsQ$ satisfies (US2). That shows $\NsQ$ is right ultra strongly quasi-hereditary.

Note that $\cN$ can be characterised as containing exactly the modules such that all summands of the socle have the form $S(i_s)$ for some $i \in Q_0$. 
Recall that the indecomposable summands of $I$ are $I(i_s)$ for $i \in Q_0$, thus $\cN = \cogen(I)$.
All the costandard modules have injective dimension at most 1 which, 
by \cite[Lemma 4.1*]{DRi3}, is equivalent to $\cF(\Delta)$ being closed under submodules.
By Proposition \ref{prp:char-tilt} we know $I(i_s) = T(i_1)$ is in $\cF(\Delta)$, thus $\cogen(I) \subset \cF(\Delta)$.
Now $\cogen(I)$ is closed under taking extensions because $I$ is injective. Since the standard modules are in $\cogen(I)$ that shows $\cF(\Delta) \subset \cogen(I)$.

Conditions $(c)$ and $(d)$ are equivalent by Corollary \ref{cor:equivalence},
and Condition $(e)$ is equivalent to $(a)$ by Proposition \ref{prp:N=monos}.

The equivalence of $(c')$ and $(d')$ is clear from the equivalence in Corollary \ref{cor:equivalence}.
From the characterisation of left strongly quasi-hereditary algebras in the appendix of \cite{Ri4}, $\cF(\nabla) = \gen(T)$. But the sequence (\ref{seq:inj}) shows that $\gen(I) = \gen(T)$, thus $(b')$ and $(c')$ are equivalent.
\end{proof}

\begin{remark}
\label{rem:ADR}
Let $A$ be an algebra with finite Loewy length as a left module, and let $s$ be such that $\rad(A)^s = 0$. Define
\[
M_A := \bigoplus_{t=1}^{s} A/ \rad(A)^t,
\]
and let $E$ be a basic $A$-module with $\add(E) = \add(M_A)$. 
The ADR-algebra is defined as $\cR_A := \End_A(E)^{\op}$. 
In \cite{Co} Conde proved that the ADR-algebras are right ultra strongly quasi-hereditary.
Indeed $\NsQ$ has many similarities with the ADR-algebras. In particular $\NsQ$ is isomorphic to $\cR_{kQ}$ as a quasi-hereditary algebra if $M_{kQ}$ is basic, which holds true if $Q$ has no sinks.

The Ringel-dual of ADR-algebras was calculated by Conde-Erdmann in \cite{CE}, which already covers the Ringel dual for many cases, where the Ringel dual of $\NsQ$ is $N_s(Q^{\op})^{\op}$.
The proof that this formula holds in general is to appear elsewhere \cite{Ei2}.


\end{remark}

\section{Richardson orbits from intermediate extensions}
\label{sec:RO from interm. ext.}

\subsection{Recollements from idempotents}
\label{subsec:idempotent recollement}
Let $B $ be a finite dimensional algebra and $e\in B $ be an idempotent element. We set $P=B e$ and $I=\kdual (eB )$ and $A=\End_{B }(P)^{\op}=eB e$. Left multiplication with $e$ gives a functor $e\colon B $-$\modu \to A$-$\modu$ which determines a recollement (see below). 
For every additive functor $F\colon \cA \to \cB$ we denote by $\Ker F$ the full subcategory of $\cA$ of objects $X$ with $F(X)=0$, and let $\Bild F$ denote the full subcategory of $\cB$ of objects $Y$ such that $Y\cong F(Z)$ in $\cB$ for some $Z$ in $\cA$, i.e. $\Bild F$ is the essential image of $F$.

We look at a diagram of six additive functors 
\[
\begin{tikzcd}
	B/(e) \bil \fdmod \arrow[rr,"i" description] 
	& & \arrow[ll,shift right=2ex,"q"'] \arrow[ll,shift left=2ex,"p"]  B \bil \fdmod \arrow[rr,"e" description] 
	& & \arrow[ll,shift right=2ex,"\ell"'] \arrow[ll,shift left=2ex,"r"] A \bil \fdmod
\end{tikzcd}
\]

defined by 
\[
\begin{aligned}
q & = B /(e) \otimes_{B } -,  &&& \ell &= P\otimes_A - , \\
i & = \text{ inclusion},   &&&  e &= \Hom_{B }(P,-),   \\
p & = \Hom_{B }(B / (e), -),  &&& r &= \Hom_{A} (eB,- ).
\end{aligned}
\] 
They fulfil: $(q,i,p), (\ell , e, r)$ are adjoint triples, $i,  \ell, r$ are fully faithful and $\Ker e =\Bild i$, which are the defining properties of a recollement of abelian categories. 

The associated TTF-triple in $\Gamma $-$\modu$ is given by 
\[ 
{\rm TTF}(e):= (\Ker q, \Ker e , \Ker p).
\] 
In particular, one has $\Ker e \cap \Ker p= \{0\}$.
Since $\ell $ is fully faithful, the unit $\eta\colon 1\to e\ell $ is an isomorphism. For $\eta^{-1}\colon e\ell \to 1$ there is an adjoint map $\ell \to r$. We define the \emph{intermediate extension functor} to be 
\[
c= \Bild (\ell \to r) \colon A\text{-} \modu \to B \text{-}\modu . 
\]

Now, we define for the projective $B$-module $P=Be$ and the injective $B$-module $I=\kdual (eB)$ the following full subcategories of $\Gamma \bil \fdmod $:
\[ 
\begin{aligned}
\gen_1(P) &:= \{ X \in B \bil \fdmod \mid \exists \: P_1\to P_0\to X\to 0 \text{ exact, } P_1, P_0\in \add (P) \}, \\
\cogen^1(I) &:= \{ X \in B \bil \fdmod \mid \exists \: 0\to X\to I_0\to I_1 \text{ exact, } I_1, I_0\in \add (I) \} .
\end{aligned}
\]

\begin{lemma} \label{KernelsOfFunctors}
\[
\begin{aligned}
 \Ker q &= \gen (P),  &&& \Bild \ell& =\gen_1 (P), \\
\Ker p &= \cogen (I), &&& \Bild r &=\cogen^1 (I).
\end{aligned}
\]
Therefore the essential image of the intermediate extension $c=\Bild (\ell \to r)$ has the description 
\[ 
\Bild c = \Ker q \cap \Ker p = \gen (P) \cap \cogen (I).
\]
\end{lemma}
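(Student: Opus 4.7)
The plan is to establish each of the four identities separately and then deduce the description of $\Bild c$. Throughout I use that the two-sided ideal $(e)$ equals $BeB$ and that, for any $B$-submodule $Z$, the condition $(e)Z = 0$ is equivalent to $eZ = 0$ (one direction uses $e \in (e)$; the other uses $BeBZ \subseteq B(eZ)$). For $\Ker q$: since $B/(e) \otimes_B X \cong X/BeX$, this vanishes iff $X = BeX$, which via $\Hom_B(Be, X) \cong eX$ is equivalent to $X$ being an epimorphic image of a direct sum of copies of $P$, i.e.\ $X \in \gen(P)$. For $\Ker p$: the module $\Hom_B(B/(e), X)$ is the largest submodule of $X$ annihilated by $(e)$, equivalently by $e$; on the other hand, the identification $\Hom_B(-, I) = \Hom_B(-, \kdual(eB)) \cong \kdual(e \cdot -)$ combined with the injectivity of $I$ shows that the kernel of the canonical evaluation map $X \to I^{\Hom_B(X, I)}$ is precisely the largest submodule of $X$ killed by $e$. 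Thus $X \in \cogen(I)$ iff that submodule is zero, giving $\Ker p = \cogen(I)$.

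For $\Bild \ell = \gen_1(P)$: one inclusion is immediate, as a projective presentation $A^m \to A^n \to M \to 0$ becomes, after applying the right exact functor $\ell = P \otimes_A -$, a presentation $P^m \to P^n \to \ell(M) \to 0$. For the converse, given $X \in \gen_1(P)$ with presentation $P_1 \to P_0 \to X \to 0$, I would apply the exact functor $e$ to obtain a projective presentation of $eX$ in $A \bil \fdmod$ (noting $eP_i \in \add(A)$), then reapply $\ell$ and invoke the five lemma together with the counit isomorphism $\epsilon_{P_i} \colon \ell e P_i \xrightarrow{\sim} P_i$, which is immediate from $\ell e(Be) = Be \otimes_A eBe \cong Be$. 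The proof that $\Bild r = \cogen^1(I)$ is formally dual: use injective copresentations, the fact that $r$ preserves injectives (as the right adjoint of the exact functor $e$), and the unit isomorphism $I \xrightarrow{\sim} reI$ for $I \in \add(I)$.

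Finally, for the description of $\Bild c$: one inclusion is formal, as $c(M)$ is a quotient of $\ell(M) \in \Bild \ell \subseteq \gen(P) = \Ker q$ and $\Ker q$ is closed under quotients; dually $c(M)$ is a submodule of $r(M) \in \Bild r \subseteq \cogen(I) = \Ker p$ and $\Ker p$ is closed under submodules. For the reverse inclusion, take $Y \in \Ker q \cap \Ker p$ and set $M := eY$; I will show $Y \cong c(eY)$. The image of the counit $\epsilon_Y \colon \ell e Y \to Y$ equals $(e)Y = BeY$, so $\epsilon_Y$ is surjective iff $Y \in \Ker q$; dually, the kernel of the unit $u_Y \colon Y \to reY$ of the $(e, r)$-adjunction equals the largest submodule of $Y$ killed by $e$, so $u_Y$ is injective iff $Y \in \Ker p$. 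The main obstacle is the compatibility that the natural transformation $\ell \to r$ defining $c$, when evaluated at $eY$, factors as $u_Y \circ \epsilon_Y$. This follows from the definition of $\ell \to r$ as the $(e, r)$-adjoint of $\eta^{-1} \colon e\ell \to \idt$, combined with the triangle identity $e\epsilon_Y \circ \eta_{eY} = \idt_{eY}$ of the $(\ell, e)$-adjunction (which gives $e\epsilon_Y = \eta^{-1}_{eY}$) and naturality of $u$, but it is the one step requiring careful unpacking. Granting this compatibility, the image of $\ell eY \to reY$ equals $u_Y(Y) \cong Y$, whence $c(eY) \cong Y$ and $Y \in \Bild c$.
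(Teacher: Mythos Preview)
Your argument is correct. The identifications of $\Ker q$, $\Ker p$, $\Bild \ell$, and $\Bild r$ are the standard ones, and your verification that $(\ell \to r)_{eY} = u_Y \circ \epsilon_Y$ via the triangle identity $e\epsilon_Y \circ \eta_{eY} = \idt_{eY}$ (hence $e\epsilon_Y = \eta^{-1}_{eY}$) together with naturality of $u$ is exactly the right way to pin down $\Bild c$; the step you flagged as ``requiring careful unpacking'' is indeed the only subtle point and you have handled it cleanly.

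The paper itself does not prove this lemma at all: its entire proof reads ``See \cite{PS}, Lemma 4.1.'' So there is no approach to compare against---you have supplied a self-contained proof where the paper defers to an external reference. Your write-up is essentially what a direct proof of the cited result looks like, and would make the paper more self-contained if included.
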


\begin{proof} See \cite{PS}, Lemma 4.1.
\end{proof}




Recall that we say an $A$-module $M$ is rigid if $\ext^1_A(M,M)=0$.
\begin{lemma}
Let $B $ be a finite-dimensional algebra and $e\in B $ be an idempotent. 
We set $A=eB e$. If $M$ is a rigid $A$-module, then $c(M)$ is a rigid $B $-module. 
\end{lemma}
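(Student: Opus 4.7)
The plan is to embed $\ext^1_B(c(M), c(M))$ into $\ext^1_A(M, M)$, which vanishes by assumption. To set up the embedding, I use the factorisation $\ell(M) \twoheadrightarrow c(M) \hookrightarrow r(M)$ defining $c$, yielding two short exact sequences
\[
0 \to K \to \ell(M) \to c(M) \to 0 \quad \text{and} \quad 0 \to c(M) \to r(M) \to C \to 0.
\]
Since $e$ is exact and $e\ell \cong \mathrm{id} \cong er$, applying $e$ collapses the induced maps $M \to e c(M) \to M$ to the identity, so both $K$ and $C$ lie in $\ker e = \Bild i$.

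Next I collect three auxiliary vanishings. From the TTF triple $(\ker q, \ker e, \ker p)$ together with $c(M)\in \ker p$ (Lemma~\ref{KernelsOfFunctors}) one obtains $\Hom_B(K, c(M)) = 0$; and for $X \in \ker e$ and any $A$-module $N$ one has $\ext^1_B(X, r(N)) = 0$. The latter I would prove as follows: given an extension $0 \to r(N) \to E \to X \to 0$, applying $e$ gives an isomorphism $\alpha \colon N \xrightarrow{\sim} eE$, and then $(r\alpha)^{-1} \circ \eta_E \colon E \to r(N)$ is a retraction by naturality of the unit $\eta \colon \mathrm{id} \to re$ together with the triangle identity $\eta_{r(N)} = \mathrm{id}$ (valid since the counit of $(e, r)$ is an isomorphism). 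Dually $\ext^1_B(\ell(M), Y) = 0$ for $Y \in \ker e$, and the corresponding $\Hom$ groups vanish by adjunction.

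Plugging these vanishings into the long exact sequences obtained by applying $\Hom_B(-, c(M))$ to the first sequence, respectively $\Hom_B(\ell(M), -)$ to the second, produces inclusions
\[
\ext^1_B(c(M), c(M)) \hookrightarrow \ext^1_B(\ell(M), c(M)) \cong \ext^1_B(\ell(M), r(M)).
\]
Applying the exact functor $e$ to an extension $0 \to r(M) \to E \to \ell(M) \to 0$ produces $0 \to M \to eE \to M \to 0$ and thus a natural map $\ext^1_B(\ell(M), r(M)) \to \ext^1_A(M, M)$. This last map is injective: if a section $\sigma \colon M \to eE$ splits the $A$-extension, its $(\ell, e)$-adjoint $\widetilde\sigma \colon \ell(M) \to E$ splits the $B$-extension, since the composite $\ell(M) \xrightarrow{\widetilde\sigma} E \to \ell(M)$ corresponds under adjunction to $M \xrightarrow{\sigma} eE \to M = \mathrm{id}_M$. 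Combined with $\ext^1_A(M, M) = 0$, this chain of inclusions forces $\ext^1_B(c(M), c(M)) = 0$.

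The main obstacle is the vanishing of $\ext^1_B(X, r(N))$ for $X \in \ker e$ (and its dual), which rests on the triangle identities of the adjunctions $(\ell, e)$ and $(e, r)$; once these are in hand the rest is a routine diagram chase in the long exact sequences.
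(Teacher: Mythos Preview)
Your proof is correct and takes a genuinely different route from the paper's. The paper chooses a projective cover $P_M\twoheadrightarrow M$ in $A\text{-}\modu$, uses that $\ell(P_M)$ is projective in $B\text{-}\modu$, and compares the two four-term exact sequences obtained by applying $\Hom_B(-,c(M))$ to $0\to Y\to \ell(P_M)\to c(M)\to 0$ and $\Hom_A(-,M)$ to $0\to eY\to P_M\to M\to 0$; a dimension count then yields $\dim\ext^1_B(c(M),c(M))\le \dim\ext^1_A(M,M)$. By contrast you work entirely with the canonical factorisation $\ell(M)\twoheadrightarrow c(M)\hookrightarrow r(M)$ and the recollement adjunctions, establishing the Ext-vanishings $\ext^1_B(\ker e,r(-))=0$ and $\ext^1_B(\ell(-),\ker e)=0$ directly from the triangle identities, and then reading off a chain of \emph{natural} injections
\[
\ext^1_B(c(M),c(M))\hookrightarrow \ext^1_B(\ell(M),c(M))\cong\ext^1_B(\ell(M),r(M))\hookrightarrow \ext^1_A(M,M).
\]
The paper's argument is slightly more elementary (only projective resolutions and linear algebra over $k$), while yours is more categorical and yields an actual monomorphism of Ext-groups rather than a mere dimension inequality; in particular your argument does not use that $k$ is a field or that the modules are finite-dimensional.
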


\begin{proof} 
We have $\ell (A) = B e $, so $\ell $ maps projective modules to projectives. 
Choose a projective cover $p_M\colon P_M \to M$ in $A \bil \fdmod$ and apply $\ell$ to it to get $\ell (p_M) \colon \ell (P_M ) \to\ell ( M)$. 
This is an epimorphism because $\ell $ is right exact. We compose it with the canonical epimorphism $\ell (M) \to c(M)$ and obtain a short exact sequence of $B $-modules
\[ 
0\to Y\to \ell (P_M) \to c(M) \to 0.
\]
Now, we apply $\Hom_{B}(- ,c(M))$ to it and obtain an exact sequence 
\[ 
\begin{aligned}
0 &\to \Hom_{B }(c(M), c(M))  \to \Hom_{B} (\ell (P_M) , c(M)) \xrightarrow{g} \Hom_{B} (Y, c(M)) \\
& \to \ext^1_{B}(c(M), c(M)) \to 0,
\end{aligned}
\]
using that $\ext^1_{B }(\ell (P_M), c(M))=0$ since $\ell (P_M)$ is projective.  
Since $ec\cong \idt $ and $\ell $ is a left adjoint to $e$, the first two vector spaces can be identified as 
\[ 
\Hom_{B }(c(M), c(M)) \cong \Hom_A (M,M), \quad \Hom_{B} (\ell (P_M), c(M) ) \cong \Hom_{A} (P_M, M).
\]
We also know that the functor $e$ gives a map $\Hom_{B } (Y, c(M)) \to \Hom_A( eY, M)$, and this coincides with the monomorphism $\Hom_B(Y,c(M)) \to \Hom_B(Y,r(M)) \cong \Hom_A(eY,M)$ induced by the inclusion $c(M) \to r(M)$. 
Since $e$ is exact, we get an exact sequence $0\to eY \to P_M \to M \to 0$. We apply the functor $\Hom_A(-, M)$ and get an exact sequence 
\[ 
0\to \Hom_{A }(M, M) \to \Hom_{A} (P_M , M) \xrightarrow{f} \Hom_{A} (eY, M) \to \ext^1_{A}(M, M) \to 0.
\]
We observe $\dim \Bild f = \dim \Bild g$. An easy comparison of dimension gives us 
\[ 
\begin{aligned}
\dim \ext^1_{B} (c(M), c(M)) & = \dim \Hom_{B}(Y, c(M)) - \dim \Bild g \\
& \leq \dim \Hom_A( eY, M) - \dim \Bild f = \dim \ext^1_A(M, M) .
\end{aligned}
\] 
So, if $M$ is rigid, then $c(M)$ is also rigid.
\end{proof}

\subsubsection{The associated projective maps. }
\label{subsub:asso-proj}

Let $k$ be an algebraically closed field. Let $A$ be a finite-dimensional basic $k$-algebra given by the quotient of a path algebra $kQ$ by an admissible ideal $I$. 
We say an $A$-module $M$ is rigid if $\ext^1_A(M,M)=0$.
Let $d \in \N_0^{Q_0}$ and denote by $\rep_{d} (A) \subset \rep_{d}$ the subset of points corresponding to modules $M$ with $IM=0$, note that this subset is $\GL_d$-invariant.
This closed subset comes with a natural scheme structure $\repsch_d(A)$, with $A$-modules of dimension vector $d$ corresponding to $k$-valued points of $\repsch_d(A)$. 
We can see $\rep_d(A)$ as the variety given by the reduction of this scheme, it inherits the action of $\GL_d$ and there is bijection between isomorphism classes of $A$-modules of dimension $d$ and $\GL_{d}$-orbits in $\rep_{d}(A)$. 
Note that $\rep_d(A)$ is not necessarily irreducible.

We recall the following facts from the literature. Part (1) is a corollary of Voigts lemma, cf. \cite{Ga}.
The second fact is stated and proven in \cite[Propostion 3.7]{Ge}, but it comes from earlier folklore.

\begin{lemma}
\label{lem:schemes}
\begin{itemize}
\item [(1)]
Let $M$ be a smooth $k$-valued point in $\repsch_d(A)$. Then the $\GL_d$-orbit of $M$ in $\rep_d(M)$ is open if and only if $M$ is rigid.
\item[(2)]
Let $M$ be a $k$-valued point of $\repsch_d(A)$. If $\ext_A^2(M,M) = 0$, then $M$ is a smooth point. 
\end{itemize}
\end{lemma}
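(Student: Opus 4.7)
The plan is to handle both parts by computing the Zariski tangent space to $\repsch_d(A)$ at $M$ in terms of derivations. For part (1), the key tool is Voigt's lemma. First I would identify $T_M \repsch_d(A)$ with the space of $k$-linear derivations from $A$ into the bimodule $\End_k M$, and the tangent space to the orbit $\GL_d \cdot M$ with the subspace of inner derivations; the quotient is then by definition $\ext^1_A(M, M)$. Since $M$ is a smooth point of $\repsch_d(A)$, we have $\dim T_M \repsch_d(A) = \dim_M \rep_d(A)$, and since every $\GL_d$-orbit is smooth, $\dim T_M(\GL_d \cdot M) = \dim (\GL_d \cdot M)$. The orbit is open in $\rep_d(A)$ if and only if these two dimensions agree, if and only if the inclusion $T_M(\GL_d \cdot M) \subset T_M \repsch_d(A)$ is an equality, if and only if $\ext^1_A(M, M) = 0$, i.e.\ $M$ is rigid.

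For part (2), the plan is to use the obstruction-theoretic description of $\repsch_d(A)$. Present $A = kQ/I$ with $I$ generated by a minimal set of relations $\rho_1, \dots, \rho_m$, so that $\repsch_d(A)$ is cut out inside the smooth affine variety $\rep_d$ as the scheme-theoretic zero locus of a morphism
\[
\Phi \colon \rep_d \longrightarrow V
\]
evaluating these relations on a given representation. Using a minimal projective bimodule resolution of $A$ together with a minimal projective resolution of $M$, one obtains an exact sequence whose last map identifies the cokernel of (the relevant restriction of) the differential $d\Phi_M$ with $\ext^2_A(M, M)$. Hence, when $\ext^2_A(M,M) = 0$, this differential is surjective onto its target, so $\repsch_d(A)$ is locally cut out by a regular sequence of the expected length at $M$, making $M$ a smooth point of $\repsch_d(A)$.

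The main technical obstacle lies in part (2): correctly identifying the cokernel of the Jacobian $d\Phi_M$ with $\ext^2_A(M, M)$. This requires an explicit comparison between the defining equations of $\repsch_d(A)$ and the second syzygy in a minimal projective bimodule resolution of $A$, but is precisely the computation carried out in \cite{Ge}. Once this identification is in hand, smoothness follows from a standard dimension count and the Jacobian criterion. Part (1), by contrast, is a direct consequence of Voigt's lemma once one knows $M$ is smooth, so no further computation is needed beyond invoking it.
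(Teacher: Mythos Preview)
Your proposal is correct and follows exactly the approach the paper indicates: the paper does not actually prove this lemma but simply records it as known, citing Voigt's lemma via \cite{Ga} for part~(1) and \cite[Proposition~3.7]{Ge} for part~(2). Your sketch unpacks precisely what those references contain, so there is nothing to compare beyond noting that you have supplied the details the paper omits.
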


Let $B$ be a finite-dimensional basic $k$-algebra. 
Let $e \in B $ be a sum of primitive orthogonal idempotent elements  and $A=e B e$. 
We can assume $B$, and hence $A$, is given by a quiver with relations, where the vertices correspond to primitive idempotents.
For a $B$-module $X$ we write $\Dim X= (f,d)= :\dd $, where $d=\Dim eX$ as a dimension vector for $A$ and $f=\Dim (1-e) X$ as a dimension vector for $(1-e)B(1-e)$. 
We look at the algebraic map 
\[ 
e \colon \rep_{\dd} (B) \to \rep_d (A), \quad X \mapsto eX.
\] 
We denote $\rep_{\dd} (B )^{\st}:=\{X\in \rep_{\dd}(B )\mid X\to re (X) \text{ is a monomorphism }\}\subseteq \rep_{\dd} (B)$ and call this the subset of stable points.
Equivalently, we can characterise the stable points as all $X \in \rep_{\dd}(B)$ such that $X \in \cogen I$.
By \cite[Section 7]{CBSa}, this is an open subset. 
We also recall: If $X$ is stable, then the natural monomorphism $ce(X)\subset re(X)$ factors over the other natural monomorphism $X \to re(X)$ implying that we have a natural monomorphism $ce(X) \to X$.  
The geometric quotient $\rep_{\dd} (B )^{\st} /\GL_f$ exists, as well as a projective map 
\[ 
\pi \colon \rep_{\dd} (B )^{\st}/\GL_f \to \rep_d(A), \quad [X] \mapsto eX.
\]

\begin{prp}
\label{prp:semi-cont}
Let $n$ denote the number of primitive idempotents in $B$.
Consider the dimension vectors of $r$ and $c$. We denote by $\cR_{\dd}$ (resp. $\cC_{\dd}$) the subset of points $M$ in $\rep_d (A)$ with $\Dim r (M) = \dd$ (resp. $\Dim c(M) = \dd $).
\begin{itemize}
\item[$(1)$]
The map 
$\rep_d(A) \to \N_0^n$, defined by $M \to \Dim r(M)$, is upper semi-continuous in each coordinate.\\ 
The map $\rep_d(A) \to \N_0^n$, defined by $M \to \Dim c(M)$, is lower semi-continuous in each coordinate.

\item[$(2)$] The functors $r$ and $c$ induce isomorphisms of algebraic varieties 
$r_{\dd}\colon \cR_{\dd} \to \pi_{\dd}^{-1} (\cR_{\dd} )$  and  $c_{\dd}\colon \cC_{\dd} \to \pi_{\dd}^{-1} (\cC_{\dd} )$, with the inverse given by restricting $\pi_{\dd}$

\end{itemize}
\end{prp}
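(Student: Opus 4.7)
The plan is to realise $r(M)$, $\ell(M)$ and $c(M)$ as kernel, cokernel, respectively image of linear maps that vary algebraically in $M\in\rep_d(A)$ between vector spaces whose dimensions depend only on $d$, so that the elementary semi-continuity of rank controls everything. For each primitive idempotent $e_j$ of $B$ I would choose a projective presentation of left $A$-modules $Q_1^j\to Q_0^j\to eBe_j\to 0$ and of right $A$-modules $R_1^j\to R_0^j\to e_jBe\to 0$. Since each $Q_i^j$ (resp.\ $R_i^j$) is a direct sum of indecomposable projectives $Ae_l$ (resp.\ $e_lA$), the $k$-dimensions of $\Hom_A(Q_i^j,M)$ and $R_i^j\otimes_A M$ depend only on $d$, and applying $\Hom_A(-,M)$ and $-\otimes_A M$ yields identifications
\[
e_jr(M)=\ker\bigl(\alpha_M^j\colon\Hom_A(Q_0^j,M)\to\Hom_A(Q_1^j,M)\bigr),
\]
\[
e_j\ell(M)=\coker\bigl(R_1^j\otimes_A M\to R_0^j\otimes_A M\bigr),
\]
where both defining maps depend algebraically on $M$.

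For part $(1)$, the lower semi-continuity of $\rank\alpha_M^j$ immediately gives upper semi-continuity of $\dim e_j r(M)=\dim\Hom_A(Q_0^j,M)-\rank\alpha_M^j$, and hence of $\Dim r(M)$ in each coordinate. For $\Dim c(M)$, the natural transformation $\ell\to r$ produces the algebraically varying composite
\[
\mu_M^j\colon R_0^j\otimes_A M\twoheadrightarrow e_j\ell(M)\longrightarrow e_jr(M)\hookrightarrow\Hom_A(Q_0^j,M)
\]
between spaces of fixed dimension. Since the first factor is surjective and the last injective, $\Bild \mu_M^j=e_jc(M)$ inside $\Hom_A(Q_0^j,M)$, so $\dim e_jc(M)=\rank\mu_M^j$ is lower semi-continuous.

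For part $(2)$, on $\cR_\dd$ the subspaces $e_jr(M)=\ker\alpha_M^j$ have constant dimension $\dd_j$, so $\bigoplus_j e_j r(M)$ forms a $\GL_f$-equivariant vector subbundle of a trivial bundle over $\cR_\dd$. The right $B$-action on $eB$ equips this subbundle with a family of $B$-module structures of dimension vector $\dd$, whose fibre at $M$ is $r(M)\in\cogen^1(I)\subset\cogen(I)$, hence stable. Working in a local $\GL_f$-frame and passing to the geometric quotient produces a morphism $r_\dd\colon\cR_\dd\to\rep_\dd(B)^{\st}/\GL_f$ satisfying $\pi_\dd\circ r_\dd=\idt$ by $er(M)=M$. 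Conversely, any stable $X$ with $\Dim X=\dd$ and $eX=M\in\cR_\dd$ satisfies $c(M)\hookrightarrow X\hookrightarrow r(M)$, and the equalities $\Dim X=\Dim r(M)=\dd$ force $X\cong r(M)$; thus $\pi_\dd$ restricted to $\pi_\dd^{-1}(\cR_\dd)$ is a bijection inverse to $r_\dd$, and both being algebraic morphisms they are mutually inverse isomorphisms of varieties. The case of $c_\dd$ is strictly analogous, substituting $\Bild\mu_M^j$ for $\ker\alpha_M^j$ as the constant-rank subbundle over $\cC_\dd$. The main obstacle I anticipate is the equivariant bundle-to-quotient translation, that is, verifying that the subbundle really carries a $B$-module structure compatible with the right $B$-action on $eB$ (resp.\ left $B$-action on $Be$) and that the resulting $\GL_f$-equivariant family descends to an honest morphism to $\rep_\dd(B)^{\st}/\GL_f$; once accepted, the remaining content reduces to the semi-continuity of rank and the uniqueness $c(M)\subseteq X\subseteq r(M)$ under the dimension constraint.
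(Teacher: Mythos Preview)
Your approach is correct and matches the paper's in its essentials: both realise $e_jr(M)$ as the kernel of an algebraically varying linear map between spaces of fixed dimension, and $e_jc(M)$ as the image of the map induced by $\ell\to r$; both then use constancy of rank on $\cR_\dd$ and $\cC_\dd$ to produce the inverse morphisms. The paper carries out part~(2) by embedding $\rep_\dd(B)^{\st}/\GL_f$ as a closed subvariety of $\rep_d(A)\times\prod_i\Gr\binom{\Hom_k(eBe_i,k^d)}{f_i}$ and checking that the kernel/image maps land regularly in the Grassmannian factors, which is exactly the Grassmannian reformulation of your subbundle argument and sidesteps the descent-to-quotient issue you flag as the main obstacle.

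The one substantive difference is your treatment of the lower semi-continuity of $\Dim c(M)$ in part~(1). The paper does \emph{not} argue this directly via rank; instead it invokes two lemmas from \cite{CBSa} to identify $\Bild\pi_\dd=\{M:\Dim c(M)\le\dd\}$ as a closed set, and reads off lower semi-continuity from that. Your direct argument --- that $\dim e_jc(M)=\rank\mu_M^j$ with $\mu_M^j$ varying algebraically between fixed spaces --- is more self-contained and in fact reuses the very map the paper constructs anyway for $c_i$ in part~(2). So your route is slightly more uniform, while the paper's route packages a stronger statement (the description of $\Bild\pi_\dd$) into the argument at the cost of an external citation.
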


\begin{remark}
For completeness we point out that similar results as for the functor $r$ hold for the functor $\ell $. 
In that case the analogue map $\ell_{\dd} $ will take values in $ \rep_{\dd} (B )^{\rm cost}/\GL_f $, where $\rep_{\dd} (B)^{\rm cost} =\{X\in \rep_{\dd}(B )\mid \ell e(X)\to X \text{ is an epimorphism }\}\subseteq \rep_{\dd} (B)$ and we call this the subset of costable points. 
\end{remark}

Let $Q_0$ denote the set of primitive idempotents of $A$.
For a dimension vector $d \in \N_0^{Q_0}$ we let $k^d$ denote the $Q_0$-graded vector space $\bigoplus_{i \in Q_0} k^{d_i}$. 
If $d,d' \in \N^{Q_0}$ are dimension vectors, we let $\rm M_{d \times d'}(k)$ denote the affine variety of homomorphisms of $Q_0$-graded vector spaces $k^d \to k^{d'}$.

\begin{proof}
Let us first prove $(1)$.
Let $e_i$ be a primitive idempotent of $B$, not necessarily a summand of $e$.
Then $eBe_i$ is a left $A$-module. It has a free resolution
\[
	A^p \overset{\chi}{\to} A^q \to eBe_i \to 0.
\]
Let $d \in \N_0^{Q_0}$ and $M \in \rep_d(A)$, recall that $r(M) = \Hom_A(eB,M)$.
Moreover the $i$-th coefficient of the dimension vector of $r(M)$ is given by $\dim_k e_i r(M) = \dim_k \Hom_A(eBe_i,M)$.
Apply $\Hom_A(-,M)$ to the free resolution of $eBe_i$ to obtain the exact sequence
\[
\label{eq:chi-res}
\tag{$\chi^*$}
\begin{tikzcd}
0 \rar & \Hom_A(eBe_i,M) \rar & \Hom_A(A^q,M) \rar["f_M"] & \Hom_A(A^p,M).
\end{tikzcd}
\]
Clearly $\dim_k \Hom_A(eBe_i,M) = \dim \ker f_M$, and $\Hom_A(A^q,M) = (k^d)^q = k^{qd} $. 
From a construction in \cite[Section 3.3]{Z02} we know that the map $\chi$ induces a morphism of varieties $f \colon \rep_d(A) \to {\rm M}_{pd \times qd}(k)$.
Now observe that the map $f_M$ is isomorphic to the map $f(M)$.
But the dimension of the kernel of a matrix over $k$ is upper semi-continuous, thus $M \mapsto \dim_k e_i r(M)$ is upper semi-continuous.\\
To see that the map  $M \mapsto \Dim c(M)$ is lower semi-continuous in each coordinate we combine lemmas 6.3 and 7.2 in \cite{CBSa}. The first lemma implies that the image of the collapsing map is always closed.
Then Lemma 7.2 states that the image of the collapsing map $\pi_{\dd}$ can be characterised by $\{ M \in \rep_d(A) \mid \Dim c(M) \leq \dd \}$, where the ordering of dimension vectors is given by pointwise comparison. This implies the lower semi-continuity. \\

Now we look at (2). Let $e_1, \ldots , e_n$ be a complete set of primitive idempotents for $B$ and we assume without loss of generality that $e=e_1 +e_2 + \cdots +e_m$ for an $m \leq n$. 
We have a regular map $\rep_{\dd} (B)^{\rm st} \to \rep_d(A) \times \prod_{m+1}^n{\rm Gr}\binom{\Hom_k(eBe_i, k^d)}{f_i} $ mapping $M\mapsto (eM, e_i(M) \subset \Hom_k(eBe_i, k^d)) $. 
The image is a closed subset isomorphic to $\rep_{\dd}(B)^{\st} /\GL_f $. Therefore, it is enough to check that 
the maps
\begin{align*} 
r_{i} &\colon \cR_{\dd } \to {\rm Gr}_k \binom{\Hom_k(eBe_i,M)} {f_i}, \; M \mapsto \left( \Hom_A(eBe_i,M) \subset \Hom_k(eBe_i,  k^d)\right); \\
c_{i} &\colon \cC_{\dd} \to {\rm Gr}_k \binom{\Hom_k(eBe_i,M)} {f_i}, \; M \mapsto \left( e_ic(M) \subset \Hom_k(eBe_i, k^d)\right) ;
\end{align*}
are regular for $i=m+1,\dots,n$.
 
A $k$-valued point $M$ of $\rep_d(A)$ gives the map $f_M$ in the exact sequence (\ref{eq:chi-res}).
As established in the proof of $(1)$ the map $\rep_d(A) \to {\rm M}_{pd\times qd}(k) $ mapping $M$ to $f_M$ is a regular map. 
By imposing rank conditions this induces a regular map 
$\cR_{\dd} \to \{ f \in {\rm M}_{pd\times qd}(k) \mid \dim \Ker (f) = f_i \}$. Now, we can compose this map with the  principal $\GL_{f_i}$-bundle 
$\{ f \in {\rm M}_{pd\times qd}(k) \mid \dim \Ker (f) = f_i \} \to {\rm Gr}_k \binom{k^{qd} } {f_i}$ . But this was not yet the Grassmannian we wanted to end up in. We have a commutative diagram 
\[
\begin{tikzcd}
0 \ar[r] & \Hom_A(eBe_i,M) \ar[r] \ar[d] & \Hom_A(A^q,M) \ar[r,"f_M"] \ar[d] & \Hom_A(A^p,M)\ar[d] \\
0 \ar[r] & \Hom_k(eBe_i,M) \ar[r] & \Hom_k(A^q,M) \ar[r]& \Hom_k(A^p,M),
\end{tikzcd}
\]
where the arrows pointing down are the natural inclusions. This induces closed immersions of Grassmannians 
\[  
\begin{tikzcd}
{\rm Gr}_k \binom{ k^{qd}} {f_i} \ar[r]&  {\rm Gr}_k \binom{\Hom_k(A^q,k^d)} {f_i} & {\rm Gr}_k \binom{\Hom_k(eBe_i,k^d)} {f_i}\ar[l] .
\end{tikzcd}
\]
By composition we get a regular map $\cR_{\dd} \to {\rm Gr}_k \binom{\Hom_k(A^q,k^d)} {f_i}$. In fact, the image lies in the closed subvariety ${\rm Gr}_k \binom{\Hom_k(eBe_i,k^d)} {f_i}$ and therefore $r_{i}$ is a regular map.
 
 For the definition of $c_i$ we consider $ \rep_d(A) \to \Hom_k (e_i Be \otimes_k k^d,  \Hom_k(eBe_i, k^d) )$ 
 mapping $M$ to the composition $g_M\colon e_i Be \otimes_k M \to e_iBe\otimes_A M \to \Hom_A(eBe_i, M) \subset  \Hom_k(eBe_i, k^d)$, where the map in the middle comes from the natural transformation $\ell \to r$.  
 Again, by considering $R$-valued points, it is easy to see that this is a regular map. By imposing rank conditions this induces a morphism of varieties 
 $\cC_{\dd} \to \{g \in \Hom_k (e_i Be \otimes_k k^d,  \Hom_k(eBe_i, k^d) ) \mid \dim \Bild g = f_i \}$. 
We compose this with the principal $\GL_{f_i}$-bundle 
\[
\{g \in \Hom_k (e_i Be \otimes_k k^d,  \Hom_k(eBe_i, k^d) ) \mid \dim \Bild g = f_i \} \to {\rm Gr }_k \binom{ \Hom_k (eBe_i , k^d)}{ f_i}.
\]
By deinition of the functor $c$, this gives us the map $c_i$ as a composition of regular maps. This shows $c_i$ is regular. 

The last claim is that $r_{\dd}$ and $c_{\dd}$ are isomorphisms. 
This follows directly from $er={\rm id} = ec$, which implies the inverses are induced by the restrictions of the map $e \colon \rep_{\dd}(B) \to \rep_d(A) $ to the fibres $\pi_{\dd}^{-1}(\cR_{\dd})$ and $\pi_{\dd}^{-1}(\cC_{\dd})$ respectively.  
\end{proof}

For every point $M\in \rep_d(A)$ we denote by $\pi^{-1} (M)$ the underlying reduced variety of the scheme $\pi^{-1} (M)$. 
If $N$ is a $B$-module with $eN=0$ then we can see $N$ naturally as a $B/(e)$-module and denote by $\Gr_{B /(e)} \binom{N}{f}$ the quiver Grassmannian with the underlying reduced scheme structure. 

\begin{lemma} \label{fibre lemma}
There is an $\Aut_A(M)$-equivariant isomorphism of projective varieties 
\[ 
\pi^{-1}(M) \to \Gr_{B /(e)} \binom{r(M)/c(M)}{\dd- \Dim c(M)} .
\]
In particular, we have $\Bild \pi \subset \{ M \in \rep_d(A) \mid \Dim c(M) \leq \dd \leq \Dim r(M)\}$. 
\end{lemma}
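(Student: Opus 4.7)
The plan is to identify $\pi^{-1}(M)$ with the variety of $B$-submodules $X$ of $r(M)$ containing $c(M)$ of dimension vector $\dd$, and then transport this bijection along the quotient $p \colon r(M) \twoheadrightarrow r(M)/c(M)$ to the stated Grassmannian.

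First I would analyse the fibre set-theoretically. A point of $\pi^{-1}(M)$ corresponds to a stable $B$-module $X$ with $\Dim X = \dd$ and $eX = M$, modulo the $\GL_f$-action. Stability gives the canonical monomorphism $\iota_X \colon X \hookrightarrow re(X) = r(M)$, and by the discussion preceding Proposition \ref{prp:semi-cont} the canonical map $c(M) = c(eX) \to r(M)$ factors as $c(M) \hookrightarrow X \xrightarrow{\iota_X} r(M)$, identifying $X$ with a $B$-submodule of $r(M)$ containing $c(M)$. Conversely, any such submodule $X' \subseteq r(M)$ with $\Dim X' = \dd$ satisfies $eX' = M$ (being squeezed between $ec(M) = M$ and $er(M) = M$) and is stable, since the inclusion $X' \hookrightarrow r(M) = r(eX')$ maps under $e$ to the identity and is therefore the canonical map. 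This yields a bijection of $\pi^{-1}(M)$ with the set $\cS$ of such submodules. Since $e(r(M)/c(M)) = 0$, the map $X \mapsto X/c(M) = p(X)$ bijects $\cS$ with the $B/(e)$-submodules of $r(M)/c(M)$ of dimension $\dd - \Dim c(M)$, with inverse given by preimage under $p$.

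Next I would upgrade this to an isomorphism of projective varieties. The subset $\cS \subseteq \Gr_B\binom{r(M)}{\dd}$ is closed, cut out by the condition of containing $c(M)$, and $X \mapsto p(X)$ is induced by the linear map $p$, hence regular with regular inverse (again cut out by rank conditions). For the identification $\pi^{-1}(M) \cong \cS$, I would proceed analogously to the construction of the maps $r_i$ and $c_i$ in Proposition \ref{prp:semi-cont}: the family of monomorphisms $\iota_X$ arises from a rank-conditioned regular map into an auxiliary Grassmannian, and one checks that quotienting by the residual $\GL_f$-action lands in $\Gr_B\binom{r(M)}{\dd}$. Equivariance under $\Aut_A(M)$ is then automatic, as the entire construction uses only the functors $r$ and $c$, on which $\Aut_A(M)$ acts naturally by functoriality. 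The ``in particular'' containment is an immediate corollary: if $\pi^{-1}(M)$ is non-empty, then so is $\cS$, producing some $X$ with $c(M) \subseteq X \subseteq r(M)$ and $\Dim X = \dd$, which forces $\Dim c(M) \leq \dd \leq \Dim r(M)$. The hard part will be the family-theoretic upgrade of the bijection $\pi^{-1}(M) \leftrightarrow \cS$: one needs that, varying $[X]$ regularly in $\rep_{\dd}(B)^{\rm st}/\GL_f$ with $eX$ fixed at $M$, the submodule $\iota_X(X) \subseteq r(M)$ varies regularly, which requires carrying the rank-condition arguments of Proposition \ref{prp:semi-cont} through the $\GL_f$-quotient.
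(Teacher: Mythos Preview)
Your proposal is correct and follows essentially the same approach as the paper: identify a point of $\pi^{-1}(M)$ with a $B$-submodule $X$ satisfying $c(M)\subset X\subset r(M)$ and $\Dim X=\dd$, then pass to $X/c(M)$. The only notable difference is that the paper packages the ``upgrade to varieties'' step into a clean auxiliary lemma about classical flag varieties (the fibre of $\fl(e,f,d)\to\Gr(e,d)$ over $U_1$ is isomorphic to $\Gr(f-e,d-e)$ via $U_2\mapsto U_2/U_1$), and then observes that $\pi^{-1}(M)$ sits inside such a fibre as a closed subvariety; this sidesteps the rank-condition and $\GL_f$-quotient bookkeeping you anticipate as the hard part.
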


Before we can prove the lemma, we need the following lemma: 
\begin{lemma} Let $d>f>e\geq 1$ be natural numbers. We denote by $\fl (e,f,d)$ the flag variety of flags $U_1 \subset U_2 \subset k^d$ with $\dim U_1=e, \dim U_2=f$ and by 
\[ 
p\colon \fl (e,f,d) \to \Gr (e,d),
\]
the map given by $p(U_1\subset U_2 \subset k^d)= (U_1 \subset k^d)$. This is a regular map between projective varieties. The map
\[ 
\phi \colon p^{-1}(U_1) \to \Gr (d-e, f-e),  
\]
defined by $\phi (U_2)= U_2/U_1$, is an isomorphism of varieties. 
\end{lemma}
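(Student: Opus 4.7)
The strategy is to construct the inverse map explicitly and verify that both $\phi$ and its inverse are regular. First I would check that $\phi$ is well-defined on $k$-points: for $U_2 \in p^{-1}(U_1)$, the chain $U_1 \subset U_2 \subset k^d$ with $\dim U_1 = e$ and $\dim U_2 = f$ gives an $(f-e)$-dimensional subspace $U_2/U_1$ of the $(d-e)$-dimensional quotient $k^d/U_1$, hence a point of the intended Grassmannian.

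Next I would define the candidate inverse $\psi$ on $k$-points by $\psi(V) := \pi^{-1}(V)$, where $\pi \colon k^d \to k^d/U_1$ is the quotient map. Since $\psi(V) \supset U_1$ and $\dim \psi(V) = e + (f-e) = f$, this is a point of $p^{-1}(U_1)$; and the identities $\phi \circ \psi = \idt$, $\psi \circ \phi = \idt$ hold tautologically.

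To upgrade these to morphisms of varieties I would argue via the functor of points. A $T$-valued point of $p^{-1}(U_1)$ is a rank-$f$ subbundle $\mathcal{U}_2 \subset k^d \otimes_k \mathcal{O}_T$ containing the constant subbundle $U_1 \otimes_k \mathcal{O}_T$; the quotient $\mathcal{U}_2/(U_1 \otimes_k \mathcal{O}_T)$ is then a rank-$(f-e)$ subbundle of $(k^d/U_1) \otimes_k \mathcal{O}_T$, and this construction is natural in $T$, giving $\phi$ as a morphism into the Grassmannian. Conversely, the pullback along $\pi \otimes_k \idt$ of any rank-$(f-e)$ subbundle of $(k^d/U_1) \otimes_k \mathcal{O}_T$ is a rank-$f$ subbundle of $k^d \otimes_k \mathcal{O}_T$ containing $U_1 \otimes_k \mathcal{O}_T$, yielding $\psi$ as a morphism.

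I do not expect a substantive obstacle; the only point requiring care is that quotient and pullback of subbundles along the fixed surjection $k^d \to k^d/U_1$ preserve local freeness and commute with base change. This is immediate because the short exact sequence $0 \to U_1 \to k^d \to k^d/U_1 \to 0$ already splits over $k$, so after tensoring with any $\mathcal{O}_T$ it remains split exact, and both constructions are then described by linear algebra on the fixed complement.
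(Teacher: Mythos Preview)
Your proof is correct. The paper in fact does not supply a proof of this auxiliary lemma at all---it is stated and then immediately used in the proof of Lemma~\ref{fibre lemma}, presumably because the authors regard it as standard linear algebra. Your functor-of-points argument is the canonical way to make the statement precise, and the observation that the fixed sequence $0 \to U_1 \to k^d \to k^d/U_1 \to 0$ splits over $k$ cleanly handles the only place where one might worry about local freeness of the intermediate bundles.
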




\begin{proof}[Proof of Lemma \ref{fibre lemma}]
A $k$-valued point in $\pi^{-1}(M)$ is given by a $B$-module $X$ fitting in a flag of submodules 
\[ 
c(M)=ce(X) \subset X \subset re(X) = r(M),
\]
such that $\Dim X = \dd $. Observe here that a point in $\rep_{\dd} (B )^{\rm st} $ is a point $X$ where these two morphisms are monomorphisms. 
Moreover $ce(X) \subset re(X)$ by definition. 
Now, passing to the geometric quotient under the $\GL_f$ operation on $X$, the two monomorphisms can be assumed to be set-theoretic inclusions. This means that $\pi^{-1}(M) $ is naturally a closed subscheme of a fibre $p^{-1}(U_2)$ as discussed in the auxiliary lemma. This implies that the map 
\[ 
\pi^{-1}(M) \to \Gr_{B /(e)} \binom{r(M)/c(M)}{\dd- \Dim c(M)} ,
  \]
sending $X$ with $c(M) \subset X \subset r(M)$ to $X/c(M)$, is an isomorphism of varieties. 
\end{proof}


\subsection{Application of the recollement to $\NsQ$}
\label{subsec:nsq recollement}

We apply the recollement above to $B = \NsQ$.
We take $e\in \NsQ$ to be the idempotent corresponding to the sum over the primitive idempotents in the $s$-th layer, i.e. $e = \sum_{i \in Q_0} e(i_s) $. 
Let $J\subset kQ$ be the two-sided ideal generated by $Q_1$. Then $kQ/J\cong k^{Q_0}$ is semi-simple and $kQ /J^s$ is finite-dimensional. 
It is well-known that one has $J=\rad (kQ)$, the Jacobson radical of $kQ$, if and only if $Q$ contains no oriented cycles. 

\begin{lemma} \label{eNsQe}
There is a canonical isomorphism of finite-dimensional $k$-algebras 
\[
e\NsQ e\to kQ/J^s.
\]
\end{lemma}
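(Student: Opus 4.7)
My plan is to construct an explicit algebra homomorphism $\varphi\colon kQ\to e\NsQ e$, compute its kernel to be $J^{s}$, and use the standard basis of $\NsQ$ to see that the induced map $\bar\varphi\colon kQ/J^{s}\to e\NsQ e$ is a bijection.

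First I would define $\varphi$ on generators by $\varphi(e_i):=e(i_s)$ for $i\in Q_0$ and $\varphi(a):=b(j_{s-1})\,a_s$ for each arrow $a\colon i\to j$ in $Q_1$. Since $b(j_{s-1})a_s\in e(j_s)\NsQ e(i_s)$, source--target compatibility is automatic and $\varphi$ extends uniquely to a $k$-algebra homomorphism landing in $e\NsQ e$. (This is just the restriction of the map $\Phi\colon T_s(Q)\to\NsQ$ of the previous subsection to the corner $e'T_s(Q)e'\cong kQ$, where $e'=\sum_{i}e_i(s,s)$.)

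The heart of the argument is to compute $\varphi$ on an arbitrary path $p=a_m\cdots a_1$ with $a_r\colon v_{r-1}\to v_r$. I would show by induction on $m$ that for $m\leq s-1$ one has
\[
\varphi(p)\;=\;b((v_m)_{s-1})\,b((v_m)_{s-2})\cdots b((v_m)_{s-m})\,(a_m)_{s-m+1}\,(a_{m-1})_{s-m+2}\cdots (a_1)_s,
\]
which is a single standard-form element of $\NsQ$, whereas $\varphi(p)=0$ whenever $m\geq s$. The inductive step uses (R1) repeatedly to push the newly attached $(a_m)_s$ rightwards through the vertical string produced at the previous step, lowering its layer index one at a time; this continues as long as the $a$-arrow lives in layers strictly greater than $1$, i.e.\ as long as $m\leq s-1$. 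When $m$ reaches $s$ the push forces a subexpression of the shape $(\cdot)_2\cdot b((\cdot)_1)$, which vanishes by (R2); since every path of length $\geq s$ factors through a path of length exactly $s$, this yields $\varphi(J^{s})=0$ and hence a well-defined $\bar\varphi\colon kQ/J^{s}\to e\NsQ e$.

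It remains to match bases. By the standard-basis description of $\NsQ$ given just after its definition, every element of $e\NsQ e$ is uniquely a $k$-linear combination of standard forms $\beta\alpha$ with source and target both in layer $s$. Since diagonal arrows lower the layer by one and vertical arrows raise it by one, any such $\alpha$ and $\beta$ must have equal length $m$; then $\alpha$ is the forced lift to $\NsQ$ of a unique path of length $m$ in $Q$ starting at the source vertex, and $\beta$ is forced to be the unique vertical chain returning to layer $s$. Such elements exist precisely when $m\leq s-1$, so they are in canonical bijection with paths of length $<s$ in $Q$, and the formula above identifies this bijection with $\bar\varphi$ on the path basis of $kQ/J^{s}$. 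Hence $\bar\varphi$ is an isomorphism. The only nontrivial step in the plan is the inductive push-past via (R1) and (R2); once it is carried out, both $\varphi(J^{s})=0$ and the bijectivity of $\bar\varphi$ follow mechanically.
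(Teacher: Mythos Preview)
Your proposal is correct and follows essentially the same route as the paper. Both construct a $k$-algebra map $kQ\to e\NsQ e$, compute it on an arbitrary path using the relations (R1) and (R2) to obtain the standard-form element $\beta\alpha$ (vertical arrows followed by diagonal arrows), observe that paths of length $\geq s$ are killed by (R2), and then match the resulting elements bijectively with the standard basis of $e\NsQ e$. The only organisational difference is that you define $\varphi$ on generators (so the homomorphism property is automatic) and then compute $\varphi(p)$ inductively, whereas the paper defines $\phi$ directly on all paths and checks multiplicativity by a case analysis; the underlying computation with (R1) and (R2) is identical.
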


\begin{proof}
We build a surjective ring homomorphism $\phi \colon kQ \to e \NsQ e$.
Let\\ $\alpha = a^{(l)} \cdots a^{(1)} \colon i \to j$ be a path of length $l< s$ in $kQ$,
we write
\[
	a(\alpha) := a_{s-l+1}^{(l)} \cdots a_{s}^{(1)}.
\]
Let $b(\alpha) := b(j_{s-1}) \cdots b(j_{s-l})$. Then we define $\phi(\alpha) := a(\alpha) b(\alpha)$.
If $\alpha$ is a path of length $\geq s$ we define $\phi(\alpha) := 0$.

To show that $\phi$ is a ring homomorphism, we check this on basis elements. 
Let $\alpha,\alpha'$ be paths of length $l$ and $l'$ respectively. 
If either $l$ or $l'$ is $\geq s$, then clearly $\phi(\alpha \alpha') = \phi(\alpha) \phi(\alpha') = 0$.
If the source of $\alpha$ is not equal to the target of $\alpha'$, then the source of $\phi(\alpha)$ is not equal to the target of $\phi(\alpha')$, thus $\phi(\alpha)\phi(\alpha') = 0 = \phi(0) = \phi(\alpha \alpha')$.
If $\alpha \alpha' \neq 0$ and $l + l' \geq s$, then the path $\phi(\alpha)\phi(\alpha')$ is contained in the ideal $\mathfrak{I}$, thus $\phi(\alpha \alpha') = 0 = \phi(\alpha) \phi(\alpha')$.
The remaining case is when $\alpha \alpha' \neq 0$ and $l + l' < s$. But then $\phi(\alpha) \phi(\alpha') = \phi(\alpha \alpha')$ up to the relations in $\NsQ$. Thus $\phi$ is a ring-homomorphism with kernel $J^s$.
 
Now $e\NsQ e$ has as standard basis elements all paths of the form $\beta \alpha$, where $\beta = b(j_{s-1}) \cdots b(j_t)$ and 
$\alpha = a_{t+1}^{(t+1)} \cdots a_s^{(s)}$.
But those are obtained as $\phi(a^{(t+1)} \cdots a^{(s)}) = \beta \alpha$, thus $\phi$ is surjective.
Then $e \NsQ e \cong kQ/J^s$ by the first isomorphism theorem. 
\end{proof}

We identify $e \NsQ e$ with $kQ/J^s$ via the isomorphism in Lemma \ref{eNsQe}, then the recollement corresponding to $e\in \NsQ$ takes the form 
\[
\begin{tikzcd}
	\NsQ/(e) \bil \fdmod \arrow[rr,"i" description] 
	& & \arrow[ll,shift right=2ex,"q"'] \arrow[ll,shift left=2ex,"p"]  \NsQ \bil \fdmod \arrow[rr,"e" description] 
	& & \arrow[ll,shift right=2ex,"\ell"'] \arrow[ll,shift left=2ex,"r"] kQ/J^s \bil \fdmod.
\end{tikzcd}
\]

By Proposition \ref{prp:f-delta} and Lemma \ref{KernelsOfFunctors} we know that for the $\Delta $-filtered modules of the quasi-hereditary structure from the previous section the following holds:
\[ 
\begin{aligned}
\Bild  c = \cogen (I) \cap \gen (P) & \subset \cogen (I) =  \cF (\Delta ), \\
\Bild  r = \cogen^1(I) & \subset \cogen (I) =  \cF (\Delta ) .
\end{aligned}
\]
Therefore, we can restrict this to functors (which we still denote by the same letters) 
\[
\begin{tikzcd}
	\NsQ/(e) \bil \fdmod 
	& & \arrow[ll,shift right=2ex,"q"']   \cF(\Delta) \arrow[rr,"e" description] 
	& & \arrow[ll,shift right=2ex,"c"'] \arrow[ll,shift left=2ex,"r"] kQ/J^s \bil \fdmod.
\end{tikzcd}
\]
The following properties of the restricted functors are straightforward and follow from the properties of the recollement. 
\begin{itemize} 
\item[(1)] $e$ is faithful and exact; 
\item[(2)] $(c , e )$ and $(e, r)$ are adjoint pairs and $c, r$ are both fully faithful;
\item[(3)] $r$ maps injectives to injectives and $c$ maps projectives to projectives. 
\end{itemize}




Let $M$ be a $kQ/J^s$-module, we can determine $c(M)$ and $r(M)$ explicitly.
Consider the socle filtration of $M$.
It is defined recursively by $\soc^0(M) = 0$, and $\soc^{t+1}(M)$ is the submodule of $M$ determined by
\[
	\soc^{t+1}(M)/\soc^{t}(M) = \soc (M /\soc^{t}(M)).
\]
Equivalently we can define $\soc^t(M)$ as the maximal submodule of $M$ annihilated by $J^{t}$.
In particular we have $\soc^t(M) = M$ for all $t \geq s-1$.
Recall that we can consider $r(M)$ and $c(M)$ as objects of $\mon_s(Q)$ via the functor $\Phi^*$ from Section \ref{subsub:N and tensor algebra}.
We write $N_t := \Phi^*(N)_t$ for the $t$-th $kQ$-module of $\Phi^*(N)$.
\begin{lemma}
\label{lem:dim c}
Let $M$ be a $kQ/J^s$-module. Then
\begin{itemize}
\item[$(1)$]
$r(M)_t \cong \soc^t(M)$.
\item[$(2)$]
$c(M)_t \cong J^{s-t} M$.
\end{itemize}
%
%
\end{lemma}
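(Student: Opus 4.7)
The plan is to transport the question to the monomorphism-category picture of Section \ref{subsec:moncat}. Under $\Phi^*$ an $\NsQ$-module $N$ becomes a tuple $(e_1N\xrightarrow{\phi_1}e_2N\xrightarrow{\phi_2}\cdots\xrightarrow{\phi_{s-1}}e_sN)$ of $kQ$-modules, where $\phi_t$ is left multiplication by $b(i_t)$ and the $kQ$-action on $e_tN$ (for $t\geq 2$) is $a\cdot n:=b(j_{t-1})a_t n$. For $N\in\cN$ each $\phi_t$ is an inclusion, so the task reduces to identifying the $kQ$-submodules $e_tr(M)\subseteq M$ and $e_tc(M)\subseteq M$.

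For part $(1)$ I first define $N^r\in\cN$ by $N^r_t:=\soc^t(M)$ with the natural inclusions; since each $\soc^{t+1}(M)/\soc^t(M)$ is a semisimple $kQ/J$-module, this does lie in $\cN$, and $eN^r=M$. I would then show $N^r\cong r(M)$ by verifying the right-adjoint universal property, i.e.\ produce a natural bijection $\Hom_{\NsQ}(X,N^r)\cong\Hom_{kQ/J^s}(eX,M)$ for $X\in\NsQ\bil\fdmod$. One direction is restriction to the top layer; for the converse, any lift $\tilde f\colon X\to N^r$ of $f\colon X_s\to M$ must satisfy $\tilde f_t(x)=f(\phi_{s-1}^X\cdots\phi_t^X(x))$ viewed inside $M$, and the only non-formal point is that this element lies in $\soc^t(M)$. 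Via Lemma \ref{eNsQe} this reduces to the algebraic identity
\[
\phi(\alpha)\cdot b(i_{s-1})b(i_{s-2})\cdots b(i_t)=0\quad\text{in }\NsQ
\]
for every path $\alpha\in kQ$ of length $t$ starting at $i$. I would prove it by iteratively using (R1) to commute each of the $t$ diagonal factors of $\phi(\alpha)$ past the $s-t$ vertical factors; each commutation drops the staircase index of a diagonal by one, so after performing $s-t$ moves on the rightmost remaining diagonal factor it reaches staircase index $2$ and faces a vertical at index $1$, whereupon (R2) annihilates the product. This relation computation is, I think, the main obstacle; once in place, compatibility of the $f_t$ with the $kQ$-structure and with the inclusions is immediate from (R1) and from $kQ/J^s$-linearity of $f$.

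For part $(2)$ I would combine $c(M)=\Bild(\ell(M)\to r(M))$ with the formula from part $(1)$ applied to $X=\ell(M)$ and $f=\idt_M$: the natural map $\ell\to r$ at layer $t$ is $\phi^\ell_{s-1}\cdots\phi^\ell_t\colon\ell(M)_t\to\ell(M)_s\cong M$. Now $\ell(M)=\NsQ e\otimes_{kQ/J^s}M$, and a standard basis element of $e_t\NsQ e$ has the form $\beta\alpha$ with $\alpha$ diagonal of length $l\in\{s-t,\ldots,s-1\}$ and $\beta$ vertical of length $l-(s-t)$; the composite $b(i_{s-1})\cdots b(i_t)\beta$ is then a vertical path of full length $l$, so under the isomorphism $\phi$ of Lemma \ref{eNsQe} the tensor $(\beta\alpha)\otimes m$ is sent to $\alpha_{kQ}\cdot m\in J^lM\subseteq J^{s-t}M$; conversely, choosing $l=s-t$ and an arbitrary $\alpha$ of that length yields every element of $J^{s-t}M$. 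Hence the image at layer $t$ is exactly $J^{s-t}M$, which gives $c(M)_t=J^{s-t}M$.
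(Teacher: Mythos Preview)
Your approach is correct but takes a longer route than the paper's. For part~$(1)$ the paper does not construct $N^r$ and verify the universal property; instead it uses the explicit formula $r(M)=\Hom_{kQ/J^s}(e\NsQ,M)$ and computes $e_t r(M)=\Hom_{kQ/J^s}(e\NsQ e_t,M)$ directly, reducing everything to the bimodule identification $e\NsQ e_t\cong kQ/J^t$ (as left $kQ/J^s$-modules), from which $\Hom_{kQ/J^s}(kQ/J^t,M)\cong\soc^t(M)$ is immediate. For part~$(2)$ the paper uses the description $c(M)=\NsQ e\cdot r(M)$ together with the algebra identity $e\NsQ e_t\NsQ e=J^{s-t}$ inside $e\NsQ e\cong kQ/J^s$, avoiding any explicit description of $\ell(M)$. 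Your hands-on computation of the image of $\ell(M)_t\to M$ via the standard basis of $e_t\NsQ e$ is correct and leads to the same answer, and the two arguments in fact rest on the same combinatorial facts about paths in $\Qs$.

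One small correction to your relation computation: the sentence ``after performing $s-t$ moves on the rightmost remaining diagonal factor it reaches staircase index $2$'' is not quite right---after $s-t$ commutations the rightmost diagonal $a^{(1)}_s$ sits at index $t$, not $2$. What actually happens is that once you push \emph{all} $t$ diagonals to the right, the leftmost one (originally $a^{(t)}_{s-t+1}$) is the one forced down to index $1$, triggering (R2). A cleaner way to phrase this is to invoke the standard form: a nonzero path from $i_t$ to $j_s$ containing $t$ diagonal arrows would, in standard form, begin with $t$ diagonals starting at level $t$, hence descending to level $0$, which is impossible. This observation replaces your iterative argument in one line and is essentially the content of the bimodule identifications the paper uses.
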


\begin{proof}
Let $e_t := \sum_{i \in Q_0} e(i_t)$ for $t=1,\dots,s$, in particular $e_s = e$.
By Lemma \ref{eNsQe} we know $e_t \NsQ e_t \cong kQ/J^t$ as $kQ$-modules, and we obtain the $kQ$-module $r(M)_t$ as $e_t r(M)$. 
Observe that $e \NsQ e_t \cong e\NsQ e / J^t$ as a left $kQ$-module, we obtain identities
\begin{align*}
	e_t r(M) = \Hom_{kQ}(e\NsQ e_t, M) \cong \Hom_{kQ}(kQ/J^t,M) \cong \soc^t(M).
\end{align*} 
Thus $r(M)_t \cong \soc^t(M)$ as claimed.

By construction $c(M)$ is the maximal submodule of $r(M)$ generated by the projective $\NsQ$-module $\NsQ e$, i.e. $c(M) = \NsQ e \: r(M)$.
Moreover, since $c(M) \in \cN$, we know $e_t \: c(M) \cong e_t \NsQ  e \: c(M)$ as a $kQ$-module.
Now $e r(M) \cong M$ and by the relations on $\NsQ$ we have $e \NsQ e_t \NsQ e = J^{s-t} \subset e \NsQ e$.
Combining this we get the following identities of $kQ$-modules:
\[
c(M)_t = e_t c(M) = e_t \NsQ e \: r(M) \cong e \NsQ e_t \NsQ e \: e r(M) \cong J^{s-t} M.
\]



\end{proof}


\subsubsection{Richardson orbits from intermediate extensions}
\label{RO-main}

Take a dimension filtration $\dd = (\dd^{(1)}, \ldots , \dd^{(s)})$ of $d \in \N_0^{Q_0}$, $\dd$ can be seen as a dimension vector of $\NsQ$ in a canonical way, with $d_i^{(t)}$ corresponding to the vertex $i_t$ of $\Qs$.
We set $f:=(\dd^{(1)}, \ldots , \dd^{(s-1)})$, so $\dd =(f,d)$, then by \cite{CBSa} we have a free operation of $\GL_f = \prod_{t=1}^{s-1} \GL_{\dd^{(t)}} $ on $\repst$.
By Proposition \ref{prp:f-delta} we know that an $\NsQ$-module is stable w.r.t. $e$ if and only if it is in $\cN = \cogen (I)$, where $I= \kdual (e \NsQ )$. 
Thus $\repst$ is the subset of $\rep_\dd(\NsQ)$ of $\dd$-dimensional $\NsQ$-modules which are in the subcategory $\cN$. 
Taking the quotient module $\GL_f$ amounts to taking the flag of submodules determined by an object in the monomorphism category, hence we can identify the geometric quotient $\repst/\GL_f$ with $\afb$. 
The multiplication with the idempotent e induces a map of algebraic varieties 
\[ 
\varphi \colon \repst \to \rep_d (kQ/J^s), \quad M\mapsto eM.
\] 
Then the following diagram commutes 
\[ 
\begin{tikzcd}
\afb \ar[rr,"\pi_{\dd}"] & & \rep_d(kQ/J^s) \\
\repst \ar[u,"\pi_{\cN}"] \ar{urr}[swap]{\varphi} & &
\end{tikzcd}
\]
In particular $\repst$ is irreducible and smooth. 

The map $\pi_{\cN} \colon \repst \to \afb$ induces a bijection between dense $\GL_d$-orbits in $\afb$ and dense $\GL_{\dd}$-orbits in $\repst$. 
Recall from the previous subsection that the category $\cN$ is the category of $\Delta$-filtered modules for a quasi-hereditary structure.
Therefore, we obtain the following. 

\begin{thm} \label{bijection}
We consider the algebra $\NsQ$ with the quasi-hereditary structure from subsection \ref{subsec:QH}. The following are equivalent:
\begin{itemize}
\item[(1)] There is a rigid $\Delta$-filtered $\NsQ$-module of dimension vector $\dd$.
\item[(2)] There is a Richardson orbit for $(Q,\dd)$. 
\end{itemize}
\end{thm}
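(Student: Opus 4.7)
The plan is to chain together three equivalences to convert the existence of a Richardson orbit into the existence of a rigid $\Delta$-filtered $\NsQ$-module. Each step has already been set up in the excerpt, so the task is mostly to assemble them carefully.

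First, I would note that by Theorem \ref{thm:5equ}, the existence of a Richardson orbit for $(Q,\dd)$, i.e.\ a dense $\para$-orbit in $\repdd$, is equivalent to the existence of a dense $\GL_d$-orbit in $\afb$. Next, I would invoke the principal $\GL_f$-bundle $\pi_{\cN}\colon \repst \to \afb$ (which is a geometric quotient by the free $\GL_f$-action from \cite{CBSa}). Since this quotient is $\GL_d$-equivariant, the preimage of a $\GL_d$-orbit in $\afb$ is a $\GL_\dd$-orbit in $\repst$, and density is preserved in both directions; hence dense $\GL_d$-orbits in $\afb$ correspond bijectively to dense $\GL_\dd$-orbits in $\repst$.

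Second, I would translate ``dense orbit in $\repst$'' into ``rigid $\Delta$-filtered module of dimension vector $\dd$''. By Proposition \ref{prp:f-delta}, $\cN = \cF(\Delta)$, and the isomorphism classes of $\NsQ$-modules of dimension vector $\dd$ that lie in $\cF(\Delta)$ are precisely the $\GL_\dd$-orbits in $\repst$. Since $\afb$ is smooth and irreducible and $\pi_{\cN}$ is a principal bundle with smooth irreducible fiber $\GL_f$, the variety $\repst$ itself is smooth and irreducible. In particular a dense $\GL_\dd$-orbit in $\repst$ is the same as an open $\GL_\dd$-orbit in $\repst$, which in turn (since $\repst$ is open in the scheme $\repsch_\dd(\NsQ)$) is an open orbit of a point which is smooth in $\repsch_\dd(\NsQ)$.

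Third, I would close the loop by Voigt's lemma (Lemma \ref{lem:schemes}(1)): at such a smooth point $M$ the orbit $\GL_\dd \cdot M$ is open if and only if $\ext^1_{\NsQ}(M,M) = 0$, i.e.\ $M$ is rigid. Therefore a dense orbit in $\repst$ exists iff some $M \in \repst$ is rigid, iff a rigid module in $\cF(\Delta)$ of dimension vector $\dd$ exists. Concatenating the three equivalences gives (1) $\iff$ (2).

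The main obstacle is the smoothness/irreducibility of $\repst$, which is what makes ``dense orbit'' coincide with ``open orbit of a rigid module''. This is ultimately inherited from the principal $\GL_f$-bundle structure $\repst\to\afb$ together with smoothness and irreducibility of $\afb$ (which holds because $\afb \cong \GL_d \times^{\para} \repdd$ with $\repdd$ a vector space and $\GL_d/\para$ smooth projective); so the argument reduces to carefully invoking this bundle together with Voigt's lemma in the correct direction.
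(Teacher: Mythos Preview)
Your overall strategy matches the paper's: pass from $\repdd$ to $\afb$ via Theorem~\ref{thm:5equ}, then to $\repst$ via the $\GL_f$-bundle $\pi_{\cN}$, and finally use Voigt's lemma (Lemma~\ref{lem:schemes}(1)) to relate open orbits to rigidity. There is, however, a gap in the smoothness step.

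You deduce that $\repst$ is smooth and irreducible \emph{as a variety} from the principal bundle over $\afb$, and then assert that a point of an open orbit in $\repst$ is smooth in the \emph{scheme} $\repsch_\dd(\NsQ)$. This does not follow: the reduced structure being smooth on an open set says nothing about whether the scheme carries nilpotents there. Concretely, the tangent space of $\repsch_\dd(\NsQ)$ at $M$ has dimension $\dim \GL_\dd\cdot M + \dim \ext^1_{\NsQ}(M,M)$, so an orbit being open in the variety only forces the variety dimension to equal the orbit dimension; the scheme tangent space may still be strictly larger. Without scheme-level smoothness you cannot invoke Lemma~\ref{lem:schemes}(1) in the direction ``open orbit $\Rightarrow$ rigid'', which is exactly what is needed for $(2)\Rightarrow(1)$.

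The paper supplies the missing ingredient by a different route: since $\NsQ$ is left strongly quasi-hereditary, every $\Delta$-filtered module has projective dimension at most $1$, hence $\ext^2_{\NsQ}(M,M)=0$ for every $M\in\cN=\cF(\Delta)$, and Lemma~\ref{lem:schemes}(2) then yields that $M$ is a smooth point of $\repsch_\dd(\NsQ)$. Once this scheme-level smoothness is available, your chain of equivalences is precisely the paper's proof.
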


\begin{proof}
Let $M$ be a point in $\rep_{\dd}(\cN)$, then $M$ is a $\Delta$-filtered $\NsQ$-module and hence has projective dimension at most 1. 
But then Lemma \ref{lem:schemes} implies that the corresponding $k$-valued point of $\repsch_{\dd}(\NsQ)$ is smooth.

Let $M$ be a rigid $\Delta $-filtered module of dimension $\dd$, such $M$ is unique up to isomorphism.
We can consider M as a point in $\rep_{\dd}(\cN)$, and $\GL_{\dd} \cdot M \subset \rep_d(\cN)$ is a dense orbit by Lemma \ref{lem:schemes}.
By our discussion above the $\GL_d$-orbit of $\pi_{\cN}(M) \in \afb$ is dense, and by Theorem \ref{thm:5equ} that means $(Q,\dd)$ has a Richardson orbit.

Now assume $\afb$ has a dense $\GL_d$-orbit $\cO$. Then $\pi_{\cN}^{-1}(\cO) \subset \rep_{\dd}$ is a dense orbit of $\rep_{\dd}(\cN)$. 
Since all points $M$ of this orbit are smooth points in $\repsch_{\dd}(\NsQ)$, Lemma \ref{lem:schemes} implies $M$ is rigid.
\end{proof}

By \cite[Lemma 7.2]{CBSa} we have for the intermediate extension $c$ associated to the idempotent $e$  
\[ 
\Bild \pi_{\dd} = \Bild \varphi \subset \{ Y\in \rep_d(kQ/J^s) \mid \Dim c(Y) \leq \dd\}. 
\] 
We also observe that for $M \in \rep_d(kQ/J^s)$, one can see $c(M)$ naturally as a point in $\repst$ for $\dd = \Dim c(M)$, which gives a point in $\afb$ via the quotient map.

\existRO*

\begin{proof} 
If $\dd = \Dim c(M)$ we know by Prop \ref{prp:semi-cont} (1) that $M\mapsto \Dim c(M)$ is lower semi-continuous and thus 
\[
\cC_{\dd} =\{ N \in \rep_d(A) \mid \Dim c(N) = \dd \} \subset  \{ N \in \rep_d (A) \mid \Dim c(N) \leq \dd \} = \Bild \pi_{\dd}
\]
is an open and non-empty subset of an irreducible variety.
By loc. cit. (2) we also get that the restriction of $\pi_{\dd}$ to 
$\pi_{\dd}^{-1} (\cC_{\dd} ) \to \cC_{\dd}$ is an isomorphism. 
In particular, $\pi_{\dd}$ is birational. \\
Now assume $\dd = \Dim r(M)$. By Prop \ref{prp:semi-cont} (1) the map $N\mapsto \Dim r(N)$ is upper semi-continous, so $\cR_{\dd} = \{N \in \rep_d(A) \mid \Dim r(N) = \dd \} \subset \{N \in \rep_d(A) \mid \Dim r(N) \geq \dd \}$ is an open non-empty subset.
Furthermore, we know by Proposition \ref{prp:semi-cont} (2) that $\cR_{\dd} \subset \Bild \pi_{\dd} \subset \{N \in \rep_d(A) \mid \Dim r(N) \geq \dd \}$, and the second inclusion is one of closed subsets in $\rep_d(A)$. Therefore $\cR_{\dd}$ is an open subset of $\Bild \pi_{\dd}$. 
Moreover Proposition \ref{prp:semi-cont} (2) states that the restriction of $\pi_{\dd}$ to $\pi_{\dd}^{-1} (\cR_{\dd}) \to \cR_{\dd}$ is an isomorphism, therefore $\pi_{\dd}$ is birational. 
\end{proof}

\begin{cor} 
The functor $r$ maps rigid modules to rigid modules. 
\end{cor}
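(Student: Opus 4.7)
The plan is to prove this corollary by dualising the preceding lemma on $c$. Whereas that lemma exploited the right-exactness of $\ell$ and preservation of projectives, here I would use that $r$ is left exact and preserves injectives (property $(3)$ of the restricted functors). Given a rigid $A$-module $M$, fix an injective envelope $M \hookrightarrow I_M$ in $A\bil\fdmod$ and apply $r$ to obtain a short exact sequence
\[
0 \to r(M) \to r(I_M) \to Y \to 0
\]
in $B\bil\fdmod$, where $Y := r(I_M)/r(M)$ and $r(I_M)$ is injective. Applying $\Hom_B(r(M), -)$ then gives a four-term exact sequence whose rightmost term is $\ext^1_B(r(M), r(M))$, using $\ext^1_B(r(M), r(I_M)) = 0$.

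I would then compare this with the analogous four-term sequence obtained from applying $\Hom_A(M, -)$ to $0 \to M \to I_M \to I_M/M \to 0$, which terminates at $\ext^1_A(M, M) = 0$ by rigidity of $M$. The first two pairs of terms match via the adjunction isomorphism $\Hom_B(r(M), r(N)) \cong \Hom_A(M, N)$ for $N = M, I_M$, combining the $(e, r)$-adjunction with $er \cong \idt$. For the third pair, exactness of $e$ together with $eY \cong I_M/M$ induces a natural comparison map $\Hom_B(r(M), Y) \to \Hom_A(M, I_M/M)$; a dimension count on the two sequences would then yield $\dim \ext^1_B(r(M), r(M)) \leq \dim \ext^1_A(M, M) = 0$ provided this comparison map is injective.

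Establishing this injectivity is the main obstacle. I would exploit the universal property that $r(M)$ is the largest submodule of $r(I_M)$ whose image under $e$ is contained in $M$, a standard consequence of the $(e, r)$-adjunction (any submodule $X \subseteq r(I_M)$ with $eX \subseteq M$ produces, by adjunction, a factorisation of its inclusion through $r(M)$). Assume $f \colon r(M) \to Y$ satisfies $e(f) = 0$, and let $\tilde K \subseteq r(I_M)$ denote the preimage of $\Bild f$ under the quotient $r(I_M) \twoheadrightarrow Y$. Applying the exact functor $e$ to $0 \to r(M) \to \tilde K \to \Bild f \to 0$ forces $e\tilde K = M$, hence $\tilde K \subseteq r(M)$ by the universal property above; since also $r(M) \subseteq \tilde K$ by construction, we get $\tilde K = r(M)$ and thus $f = 0$.
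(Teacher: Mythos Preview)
Your argument is correct: the dualisation of the lemma for $c$ goes through cleanly, and your justification of the injectivity of $\Hom_B(r(M),Y)\to\Hom_A(M,I_M/M)$ via the characterisation of $r(M)$ as the largest submodule of $r(I_M)$ with $e$-image contained in $M$ is sound. In fact your proof works verbatim in the generality of an arbitrary idempotent recollement, parallel to the lemma for $c$, and does not use anything special about $\NsQ$.

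The paper, by contrast, gives no explicit proof and intends the statement as a geometric consequence of Theorem~\ref{existRO}. The reasoning is: if $M$ is a rigid $kQ/J^s$-module then, by Voigt's lemma, its $\GL_d$-orbit $\cO_M$ is open in $\rep_d(kQ/J^s)$, hence open and dense in the irreducible closed subvariety $\Bild\pi_{\dd}$ for $\dd=\Dim r(M)$; Theorem~\ref{existRO} then directly asserts that the resulting Richardson orbit is given by the rigid $\NsQ$-module $r(M)$. So the paper's route passes through the birationality of $\pi_{\dd}$ and the bijection of Theorem~\ref{bijection}, and is specific to the $\NsQ$ setting. Your approach is more elementary and more general, while the paper's ties the statement into the geometric narrative that motivates the corollary's placement.
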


\begin{cor}
If $kQ/J^s$ is representation-finite, then 
$(Q, \dd) $ has a Richardson orbit for every dimension $\dd $ of an intermediate extension $c(X)$, or of $r(X)$, with $X$ a $kQ/J^s$-module.  
\end{cor}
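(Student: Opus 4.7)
The plan is to reduce the statement directly to Theorem \ref{existRO}; all that needs to be checked is that the hypothesis on $\Bild \pi_{\dd}$ having a dense $\GL_d$-orbit is automatically satisfied under the representation-finiteness assumption.

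First I would observe that $\Bild \pi_{\dd}$ is an irreducible closed subvariety of $\rep_d$. Irreducibility follows because $\RF \cong \GL_d \times^{\para} \repdd$ is smooth and irreducible (as already recorded after the definition of $\pi_{\dd}$), and the image of an irreducible variety under a morphism is irreducible; closedness follows from $\pi_{\dd}$ being projective. Next I would check that $\Bild \pi_{\dd} \subset \rep_d(kQ/J^s)$: any $M \in \repdd$ satisfies $M_{i \to j}(F_i^{(t)}) \subset F_j^{(t-1)}$, so iterating $s$ times along any path lands inside $F^{(0)} = 0$, hence $J^s$ annihilates $M$, and this condition is preserved under the $\GL_d$-action.

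Now invoke the hypothesis that $kQ/J^s$ is representation-finite: then for any dimension vector $d$, the variety $\rep_d(kQ/J^s)$ decomposes into finitely many $\GL_d$-orbits. Consequently the irreducible subset $\Bild \pi_{\dd}$ is covered by finitely many locally closed subsets (its intersections with these orbits), which forces exactly one of them to be dense, namely a unique $\GL_d$-orbit $\cO_N$ of maximal dimension.

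Finally, by assumption $\dd = \Dim c(X)$ or $\dd = \Dim r(X)$ for some $X \in \rep_d(kQ/J^s)$, so Theorem \ref{existRO} applies and yields a quiver-graded Richardson orbit for $(Q,\dd)$ contained in $\cO_N$. There is no serious obstacle; the only small care is to ensure irreducibility of $\Bild \pi_{\dd}$ so that the finite-orbit decomposition forces a dense orbit, which is the one nontrivial geometric input.
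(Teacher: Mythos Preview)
Your proof is correct and matches the reasoning the paper leaves implicit: the corollary is stated without proof immediately after Theorem~\ref{existRO}, and the only point to verify is that representation-finiteness of $kQ/J^s$ forces the irreducible closed set $\Bild \pi_{\dd}\subset \rep_d(kQ/J^s)$ to contain a dense $\GL_d$-orbit, exactly as you argue. Note also that the containment $\Bild \pi_{\dd}\subset \rep_d(kQ/J^s)$ is already recorded in the commutative diagram preceding Theorem~\ref{bijection}, so you need not rederive it.
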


\begin{rem}
If we take $M\in \rep_d(kQ/J^s)$ and $\dd := \Dim c(M)$ and assume $\Bild \pi_{\dd} = \overline{\cO}_N$, then it can happen that $N$ and $M$ are not isomorphic. 
For example, this is the case in the example below. 
\end{rem}

\begin{example} \label{running} Take $Q= (\xymatrix{1 & \ar[r] 2 \ar[l] & 3})$. Then the quiver for $B = \NtwoQ$ is 
\[ 
\xymatrix{
1_2   &  2_2 \ar[dl] \ar[dr] & 3_2  \\
1_1 \ar[u] & 2_1 \ar[u] \ar@{.}[l] \ar@{.}[r]& 3_1. \!\! \ar[u] 
}
\]
We consider the idempotent recollement with respect to  $e=e(1_2) + e(2_2) + e(3_2)$, clearly $eBe=kQ$. We look at the $kQ$-module $M = I(2) \oplus P(2)$. 
Then we have $r(I(2))=I(2_2)$ since $r$ maps injectives to injectives, 
and $c(I(2))=S(2_2)$ because $c$ maps simples to simples. 
We have $r(P(2))=P(2_2) = c(P(2))$ because there is no other indecomposable module $Y$ with $eY=P(2)$. 
Then the module $X:=r(M) =I(2_2) \oplus P(2_2)$ is rigid, because $I(2_2)= P(2_1)$ is projective-injective. But $ce(X)= c(M)=S(2_2) \oplus P(2_2)$ is not rigid, because we have a non-split short exact sequence 
\[
0\to P(2_2) \to I(1_2) \oplus I(3_2) \to S(2_2) \to 0. 
\]
\end{example}

Furthermore, we have the following corollary from the proof of Theorem \ref{existRO}. The second statement is the main result of \cite{BHT}, but we provide a different proof. 

\begin{cor} \label{irrcompOfNilp} Let $\cC_{\dd} , \cR_{\dd} \subset \rep_d(kQ/J^s)$ be two strata defined as before through the functors $r$ and $c$ induced by the highest layer idempotent $e\in \NsQ$. 
Let $\leq $ denote the partial order on dimension vectors given by pointwise comparison in every coordinate.
\begin{itemize}
\item[$(1)$] Every non-empty $\cC_{\dd}$ and every non-empty $\cR_{\dd}$ is smooth, irreducible and of dimension 
\[ 
\dim \RF = d \cdot d - \langle \dd , \dd \rangle_{\NsQ},
\] 
and its closure is $\overline{\cC}_{\dd} = \Bild \pi_{\dd}$ and $\overline{\cR}_{\dd} =  \Bild \pi_{\dd}$. 
\item[$(2)$] For every non-empty $\cC_{\dd}$ one has $\overline{\cC}_{\dd} = \bigcup_{\dd' \leq \dd} \cC_{\dd'}$. The irreducible components of $\rep_d(kQ/J^s)$ are those $\overline{\cC}_{\dd}$ for which
$\dd$ is maximal with respect to the finite set 
$\{\dd \mid \exists \: M\in \rep_d(kQ/J^s) \text{ with } \Dim c(M)= \dd \}$. 
\end{itemize}
\end{cor}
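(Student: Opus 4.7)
The proof is essentially an assembly of earlier results, with only one numerical identity to verify by hand. The main tools are Proposition \ref{prp:semi-cont}, which provides the isomorphisms $c_{\dd}$ and $r_{\dd}$ onto open pieces of $\afb$, and the known description $\Bild \pi_{\dd} = \{M \in \rep_d(kQ/J^s) \mid \Dim c(M) \leq \dd\}$ (obtained in the proof of Proposition \ref{prp:semi-cont}$(1)$ from \cite{CBSa}).

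For part $(1)$: The isomorphisms $c_{\dd} \colon \cC_{\dd} \xrightarrow{\sim} \pi_{\dd}^{-1}(\cC_{\dd})$ and $r_{\dd} \colon \cR_{\dd} \xrightarrow{\sim} \pi_{\dd}^{-1}(\cR_{\dd})$ from Proposition \ref{prp:semi-cont}$(2)$ identify the strata with open subvarieties of the smooth irreducible variety $\afb$, which yields smoothness, irreducibility, and the dimension $\dim \RF$. For the closures: $\Bild \pi_{\dd}$ is irreducible as the continuous image of $\afb$, and the proof of Theorem \ref{existRO} shows that each of $\cC_{\dd}$ and $\cR_{\dd}$ is a non-empty open subset of $\Bild \pi_{\dd}$, hence both have it as their closure. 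The numerical identity $\dim \RF = d \cdot d - \langle \dd, \dd \rangle_{\NsQ}$ is a direct computation: expand the explicit formula for $\dim \RF$ using the increments $\delta_i^{(t)} := d_i^{(t)} - d_i^{(t-1)}$ and rearrange to match Bongartz's formula for the Euler form of $\NsQ$, where the contributions from vertices, the vertical and diagonal arrows of $\Qs_1$, and the relation arrows of $\Qs_2$ are collected separately; the cross terms organise into the required quadratic form.

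For part $(2)$: Combining the image description with part $(1)$ gives $\overline{\cC_{\dd}} = \Bild \pi_{\dd} = \{M \mid \Dim c(M) \leq \dd\} = \bigsqcup_{\dd' \leq \dd} \cC_{\dd'}$, which is the first claim. Since the strata partition $\rep_d(kQ/J^s)$ and every achievable $\dd$ satisfies $\dd^{(t)} \leq d$ componentwise, the set of achievable dimension vectors is finite, so one obtains a finite covering $\rep_d(kQ/J^s) = \bigcup_{\dd} \overline{\cC_{\dd}}$ by irreducible closed subsets. The same closure formula shows that $\overline{\cC_{\dd'}} \subset \overline{\cC_{\dd}}$ if and only if $\dd' \leq \dd$; consequently, the irreducible components of $\rep_d(kQ/J^s)$ are precisely those $\overline{\cC_{\dd}}$ for which $\dd$ is maximal in the poset of achievable dimension vectors.

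The only step requiring any genuine work is the Euler form identification $\langle \dd, \dd \rangle^{(1)} = \langle \dd, \dd \rangle_{\NsQ}$; all other statements follow formally from Proposition \ref{prp:semi-cont} and the image characterisation of $\Bild \pi_{\dd}$.
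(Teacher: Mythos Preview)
Your proposal is correct and follows essentially the same route as the paper: part (1) is deduced from Proposition \ref{prp:semi-cont}(2) and the argument in the proof of Theorem \ref{existRO}, and part (2) from the description $\Bild \pi_{\dd} = \{M \mid \Dim c(M) \leq \dd\}$ via \cite[Lemma 7.2]{CBSa}. You supply more detail than the paper does, in particular you spell out the Euler form identity $\dim \RF = d\cdot d - \langle \dd,\dd\rangle_{\NsQ}$ (which the paper leaves implicit via the notation $\langle \dd,\dd\rangle^{(1)}$ and the remark after Corollary \ref{cor:gl-dim}) and the poset argument for the irreducible components.
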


\begin{proof} Statement (1) follows directly the proof of Theorem \ref{existRO}. \\
For (2), the first claim follows from $\Bild \pi_{\dd} = \{ N \in \rep_d(kQ/J^s) \mid \Dim c(N) \leq \dd \}$ cf. \cite[Lemma 7.2]{CBS}. The second claim is a direct consequence of it.
 \end{proof}
 

\subsection{The relaxed Richardson property}

Let $Q$ be as before, let $d$ be a dimension vector for $Q$, and let $\dd$ be a dimension filtration of $d$ of length $s$.
As we still consider the collapsing map $\pi_{\dd} \colon \afb \to \rep_d$.

\begin{defn} \label{DfnrelaxRP}
We say the pair $(Q,\dd)$ has the \emph{relaxed Richardson property} if there is an open subset $U \subset \Bild \pi_{\dd}$ such that for each point $M \in U$, the automorphism group $\aut_{kQ/J^s}(M)$ operates with a dense orbit on $\pi_{\dd}^{-1}(M)$.
\end{defn}

\begin{rem}
This is a weaker condition than having a Richardson orbit. 
By Theorem \ref{thm:5equ}, $(Q,\dd)$ has a Richardson orbit if and only if there is an open subset $U$ as in the definition which has a dense $\GL_d$ orbit.
In Section \ref{subsec:Kronecker} is a case where the relaxed Richardson property holds, but there is no Richardson orbit.
Section \ref{subsec:D4} we give an example where the relaxed Richardson property fails.
\end{rem}



We do however have a large class of examples where the relaxed Richardson property holds.

\relaxed*

\begin{proof}
Take the proof from Theorem \ref{existRO}. It also applies to this situation. 

\end{proof}


\subsection{The Grassmannian case $s=2$}

Now we study the case $s=2$. Let $e=\sum_{i\in Q_0} e(i_2) \in \NtwoQ$.  
We observe that $\NtwoQ/(e)$ is semi-simple. This already leads us to: 

\begin{lemma}
Let $Q$ be a quiver and $\dd =(d^1, d^2 )$ a dimension filtration of $d$, then 
the fibres of the map $\pi_{\dd}$ are either empty or products of vector space Grassmannians. 
In particular they are all smooth and connected if they are non-empty.\\
Furthermore, we have $\Bild \pi_{\dd} = \{ M\in \rep_d (kQ/J^2) \mid \Dim c(M) \leq \dd \leq \Dim r(M) \}$. 
\end{lemma}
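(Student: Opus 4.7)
The plan is to invoke Lemma \ref{fibre lemma} and then exploit the fact that for $s=2$ the quotient algebra $\NtwoQ/(e)$ is semisimple. The key initial observation is that every arrow of the staircase quiver $Q^{(2)}$ is incident to a vertex of the form $i_2$: the vertical arrows $b(i_1) \colon i_1 \to i_2$ end at such a vertex, and the diagonal arrows $a_2 \colon i_2 \to j_1$ start at one. Consequently killing the two-sided ideal generated by $e = \sum_{i \in Q_0} e(i_2)$ removes every arrow of $Q^{(2)}$, leaving only the isolated vertices $i_1$, so $\NtwoQ/(e) \cong k^{Q_0}$ is semisimple.

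Next, fix $M \in \rep_d(kQ/J^2)$. By Lemma \ref{fibre lemma} the fibre $\pi_{\dd}^{-1}(M)$ is isomorphic to $\Gr_{\NtwoQ/(e)}\binom{r(M)/c(M)}{\dd - \Dim c(M)}$. A submodule over a semisimple algebra of prescribed dimension vector is nothing more than an independent choice of a subspace of the prescribed dimension at each simple component. Hence this quiver Grassmannian decomposes canonically as the product of ordinary vector space Grassmannians
$$
\prod_{i \in Q_0} \Gr\!\bigl(d_i^{(1)} - \Dim c(M)_{i_1},\ \Dim r(M)_{i_1} - \Dim c(M)_{i_1}\bigr),
$$
which is a product of smooth connected varieties, hence smooth and connected whenever non-empty. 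This establishes the first claim.

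For the image, Lemma \ref{fibre lemma} already gives $\Bild \pi_{\dd} \subset \{M \mid \Dim c(M) \leq \dd \leq \Dim r(M)\}$. Conversely, if these inequalities hold, then each factor of the above product is non-empty, since $0 \leq d_i^{(1)} - \Dim c(M)_{i_1} \leq \Dim r(M)_{i_1} - \Dim c(M)_{i_1}$; hence the fibre itself is non-empty and $M \in \Bild \pi_{\dd}$. The inequality at the top layer is automatic: $ec(M) = M = er(M)$ forces $\Dim c(M)_{i_2} = d_i = \Dim r(M)_{i_2} = \dd^{(2)}_i$, so the full pointwise condition $\Dim c(M) \leq \dd \leq \Dim r(M)$ reduces to the layer-one condition just verified.

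I do not anticipate any substantial obstacle. The entire argument rests on the combinatorial observation that $\NtwoQ/(e)$ contains no arrows together with the fibre description of Lemma \ref{fibre lemma}; everything else is routine bookkeeping with the dimension vectors at the two layers.
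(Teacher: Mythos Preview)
Your proposal is correct and follows essentially the same approach as the paper: invoke Lemma \ref{fibre lemma} to identify the fibre with a quiver Grassmannian over $\NtwoQ/(e)$, observe that this quotient is semisimple, and read off both the product decomposition and the non-emptiness criterion. You supply a bit more detail (the explicit description of $\NtwoQ/(e)\cong k^{Q_0}$, the factorwise Grassmannian formula, and the remark that the top-layer inequalities are automatic), but the argument is the same.
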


\begin{proof}
Let $M\in \rep_d (kQ/J^2)$. By Lemma \ref{fibre lemma} we have 
\[ \pi_{\dd}^{-1}(M) \cong \Gr_{\NtwoQ/(e)} \binom{r(M)/c(M)}{\dd- \Dim c(M)} .\]
Since $\NtwoQ/(e)$ is semi-simple, the quiver Grassmannian on the right is either empty or a product of vector space Grassmannians. A point $M \in \rep_d(kQ/J^2)$ is contained in the image if and only if the fibre over this point is non-empty, and this is precisely the case when the pointwise dimension inequality  $\Dim c(M) \leq \dd \leq \Dim r(M)$ is fulfilled. 
\end{proof}

\begin{rem}
Let $B $ be a finite dimensional $k$-algebra, $e\in B $ an idempotent element. Let $A=e B e$ and $M$ an $A$-module. Recall from \cite{FP} that we have an exact sequence of natural transformations 
\[ 
0\to c\to r \to iqr \to 0.
\]
We have a natural $k$-algebra homomorphism 
$
\psi_M \colon \End_A(M)\to \End_{B /(e)} (qr (M)).
$
In general it is neither injective nor surjective.  
\end{rem}

Let us return to $B =\NtwoQ$ and $e=\sum_{i\in Q_0}e(i_2)$.
The isomorphism $\pi_{\dd}^{-1}(M)\to \Gr_{\NtwoQ/(e)} \binom{qr(M)}{\dd- \Dim c(M)} $ is $\Aut_{kQ/J^2}$-equivariant where the group operation on the $\NtwoQ/(e)$-quiver Grassmannian is induced by the homomorphism $\Aut_{kQ/J^2}(M) \to \Aut_{\NtwoQ /(e)}(qr(M)) $.

\begin{lemma}
For every $kQ/J^2$-module $M$, the map $f\mapsto qr(f)$ 
\[ 
\End_{kQ/J^2} (M) \to \End_{\NtwoQ/(e)}(qr(M))=\End_{\NtwoQ/(e) } (r(M)/c(M))
\] 
is surjective. 
\end{lemma}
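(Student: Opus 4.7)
The plan is to make both sides explicit using Lemma \ref{lem:dim c} and then exhibit a section by a direct vector space splitting argument.

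First I would compute $r(M)$, $c(M)$ and $qr(M)$ explicitly in the case $s=2$. By Lemma \ref{lem:dim c} we have $r(M)_1 \cong \soc(M)$, $r(M)_2 \cong M$, $c(M)_1 \cong JM$, $c(M)_2 \cong M$. Since $J^2M=0$ we have $JM \subset \soc(M)$, so $c(M) \subset r(M)$ and
\[
qr(M) \;=\; r(M)/c(M) \;\cong\; \soc(M)/JM,
\]
sitting entirely in the bottom layer. The algebra $B/(e) = \NtwoQ/(e)$ is semisimple and isomorphic to $k^{Q_0}$ (there are no arrows among the vertices $i_1$), so
\[
\End_{B/(e)}(qr(M)) \;=\; \prod_{i\in Q_0} \End_k\!\bigl( (\soc(M)/JM)(i) \bigr).
\]
Thus giving an element of the target is the same as giving a family of $k$-linear endomorphisms $g_i$ of $(\soc(M)/JM)(i)$, with no compatibility between different vertices.

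Next I would construct a lift. For each vertex $i \in Q_0$ I choose vector space decompositions
\[
M(i) \;=\; \soc(M)(i)\oplus W_i, \qquad \soc(M)(i) \;=\; (JM)(i)\oplus U_i,
\]
so that the projection gives an isomorphism $U_i \xrightarrow{\sim} (\soc(M)/JM)(i)$. Given $(g_i)_{i\in Q_0}$, I transport each $g_i$ via this isomorphism to an endomorphism $\tilde g_i$ of $U_i$, and I define a linear map $f_i \colon M(i) \to M(i)$ to be $\tilde g_i$ on $U_i$ and zero on $(JM)(i) \oplus W_i$. By construction, the tuple $(f_i)_{i\in Q_0}$ induces $g_i$ on $(\soc(M)/JM)(i)$.

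The only thing left to check is that $f = (f_i)$ commutes with the action of every arrow $a\colon i\to j$ in $Q$, i.e.\ $M(a)\circ f_i = f_j \circ M(a)$. For $v\in U_i \oplus (JM)(i) = \soc(M)(i)$ we have $M(a)v=0$, while $f_i(v) \in U_i \subset \soc(M)(i)$, so $M(a)f_i(v) = 0$ as well. For $v\in W_i$ we have $f_i(v) = 0$, and $M(a)v \in (JM)(j)$, which is in the kernel of $f_j$ by construction; so again both sides vanish. Hence $f$ is a $kQ/J^2$-endomorphism of $M$ and $qr(f) = (g_i)$, proving surjectivity.

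The main step here is the explicit description of $qr(M)$ via Lemma \ref{lem:dim c} together with the semisimplicity of $\NtwoQ/(e)$; once both are in hand, the splitting argument is forced and essentially routine. No genuine obstacle arises because arrows act trivially on $\soc(M)$ and land in $JM$, so that an endomorphism whose image is concentrated in the socle and which vanishes on $JM$ is automatically a module map.
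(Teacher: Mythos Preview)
Your proof is correct but takes a genuinely different route from the paper. You work entirely on the $kQ/J^2$-side: using Lemma~\ref{lem:dim c} you identify $qr(M)$ with $\soc(M)/JM$ sitting in layer~$1$, and then lift a given tuple $(g_i)$ by hand via vector space splittings $M(i)=W_i\oplus(JM)(i)\oplus U_i$, checking commutation with arrows by the observation that arrows annihilate the socle and land in $JM$. The paper instead works on the $\NtwoQ$-side: it uses that $r$ is fully faithful to rewrite $\End_{kQ/J^2}(M)$ as $\End_{\NtwoQ}(r(M))$, identifies the indecomposable summands of $r(M)$ whose top lies in layer~$1$ as copies of $P(i_1)$, splits off the maximal such summand $Y'$, and then reduces to the obvious surjection $\End_{\NtwoQ}(P(i_1)^n)\to \End_{\NtwoQ/(e)}(S(i_1)^n)$. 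Your argument is more elementary and self-contained, requiring no knowledge of the indecomposable $\NtwoQ$-modules, only the semisimplicity of $\NtwoQ/(e)$; the paper's argument is more structural and factors the map through the summand $Y'$, which makes the surjectivity transparent without ever writing down explicit linear sections.
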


\begin{proof}
Given a $kQ/J^2$-module $M$, the module $qr(M)$ is a the maximal summand of $\Top(r(M))$ which is supported on $\{i_1\mid i\in Q_0\}$. For an indecomposable $\NtwoQ$-module $Y$ one has that $\Top(Y)$ is supported in $\{i_1\mid i\in Q_0\}$ if and only if $Y=P(i_1)$ or $Y=S(i_1)$ for some $i\in Q_0$. 
Since $S(i_1)$ is not in $\Bild r$ we conclude that $qr(M)$ coincides with $q(Y^\prime )$, where $Y^\prime \subset r(M) $ is a maximal summand isomorphic to a direct sum of projectives of the form $P(i_1)$ for $i \in Q_0$. 
The projection onto a summand gives a surjective map  
\[ 
\End_{kQ/J^2}(M)= \End_{\NtwoQ}(r(M))\to \End_{\NtwoQ}(Y^\prime ).  
\]
Hence it is enough to prove that $\End_{\NtwoQ}(Y^\prime ) \to \End_{\NtwoQ/(e)} (q(Y^\prime))$ is surjective. 
We have $\Hom_{\NtwoQ} (P(i_1), P(j_1))=0$ for $i\neq j$, thus it is enough to prove this for $Y^\prime =P(i_1)^n$. 
But by definition of $\NtwoQ$ we have $\End_{\NtwoQ} (P(i_1)) = M_n(k) = \End_{\NtwoQ/(e)} (S(i_1))$, therefore the map is surjective.   
\end{proof}

Since $\Aut_{\NtwoQ/(e)}(r(M)/c(M))$ is a product of general linear groups which operates transitively on $\Gr_{\NtwoQ/(e)} \binom{r(M)/c(M)}{\dd- \Dim c(M)}$, the lemma implies that the group $\Aut_{kQ/J^2}(M)$ operates transitively on the quiver Grassmannian. 

We summarise the previous discussion in the following proposition. 
\begin{prp} \label{s2RO}
Let $Q$ be a quiver and $\dd =(d^1, d^2=d )$ a dimension filtration, then ($Q,\dd)$ has a Richardson orbit if and only if there is a dense $\GL_d$-orbit in $\Bild \pi_{\dd}$. Furthermore, for every point $M\in \rep_d(kQ/J^2)$, the fibre 
$\pi_{\dd}^{-1}(M)$ is non-empty if and only if $\Dim c(M) \leq \dd \leq \Dim r(M)$ is fulfilled. If it is non-empty, $\pi_{\dd}^{-1}(M)$ is a product of Grassmannians and the group operation of $\Aut_{kQ/J^2}(M)$ is transitive. 
\end{prp}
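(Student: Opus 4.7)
The plan is to assemble the three assertions from the material already established in this subsection: the fibre lemma (Lemma \ref{fibre lemma}), the observation that $\NtwoQ/(e)$ is semisimple, and the surjectivity of $\psi_M$ stated just above the proposition. I then combine these with Theorem \ref{thm:5equ} to deduce the equivalence.

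First I would treat the fibre description, which covers the second and part of the third assertion. Using Lemma \ref{eNsQe} to identify $e\NtwoQ e$ with $kQ/J^2$, the idempotent recollement from Section \ref{subsec:nsq recollement} applies to $B = \NtwoQ$. Lemma \ref{fibre lemma} then provides an $\aut_{kQ/J^2}(M)$-equivariant isomorphism
\[
\pi_{\dd}^{-1}(M) \;\cong\; \Gr_{\NtwoQ/(e)}\binom{r(M)/c(M)}{\dd - \Dim c(M)}.
\]
The right-hand side is non-empty exactly when $0 \leq \dd - \Dim c(M) \leq \Dim r(M) - \Dim c(M)$, which is the inequality $\Dim c(M) \leq \dd \leq \Dim r(M)$. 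Moreover $\NtwoQ/(e)$ is semisimple, because in $Q^{(2)}$ every arrow (vertical or diagonal) either starts or ends at a vertex of the form $i_2$ and is therefore killed in the quotient, leaving only the semisimple algebra $k^{Q_0}$. Consequently $r(M)/c(M)$ is a sum of simples $S(i_1)$, and the quiver Grassmannian above splits as a product of ordinary vector space Grassmannians indexed by $Q_0$.

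Next I would establish the transitivity claim. The group $\Aut_{\NtwoQ/(e)}(r(M)/c(M))$ is a product of general linear groups $\prod_i \GL(e(i_1)(r(M)/c(M)))$, and it acts transitively on the product of Grassmannians just described. By the lemma immediately preceding the proposition, the $k$-algebra homomorphism
\[
\psi_M \colon \End_{kQ/J^2}(M) \longrightarrow \End_{\NtwoQ/(e)}(r(M)/c(M))
\]
is surjective. Since the target is semisimple, its quotient $\End_{kQ/J^2}(M)/\ker \psi_M$ is also semisimple, so $\ker \psi_M \supseteq \rad(\End_{kQ/J^2}(M))$. A standard argument then shows that $\psi_M$ restricts to a surjection of unit groups $\aut_{kQ/J^2}(M) \twoheadrightarrow \Aut_{\NtwoQ/(e)}(r(M)/c(M))$: indeed the induced surjection between semisimple algebras $\End_{kQ/J^2}(M)/\rad \twoheadrightarrow \End_{\NtwoQ/(e)}(r(M)/c(M))$ is surjective on units (as a quotient of a product of matrix algebras by a direct factor), and $1 + \rad(\End_{kQ/J^2}(M))$ consists of units of $\End_{kQ/J^2}(M)$. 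Hence $\aut_{kQ/J^2}(M)$ acts transitively on $\pi_{\dd}^{-1}(M)$.

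Finally, for the first assertion, I invoke Theorem \ref{thm:5equ}. That theorem says $(Q, \dd)$ has a Richardson orbit if and only if $\Bild \pi_{\dd}$ carries a dense $\GL_d$-orbit $\cO$ \emph{and} for each $M \in \cO$ the fibre $\pifib$ has a dense $\aut_Q(M)$-orbit. For any $kQ/J^2$-module $M$ one has $\aut_Q(M) = \aut_{kQ/J^2}(M)$, and by the transitivity just established this group already acts transitively on every non-empty fibre; thus the second condition is automatic on the image. This reduces the existence of a Richardson orbit to the existence of a dense $\GL_d$-orbit in $\Bild \pi_{\dd}$, completing the proof. The only mildly technical step is the passage from surjectivity of $\psi_M$ to surjectivity on unit groups; everything else is bookkeeping over the results already proved in this section.
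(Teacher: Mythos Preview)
Your proposal is correct and follows essentially the same route as the paper, which simply summarises the preceding lemmas (the fibre lemma, semisimplicity of $\NtwoQ/(e)$, and surjectivity of $\psi_M$) and then appeals to Theorem \ref{thm:5equ}. The only difference is that you spell out the passage from surjectivity of $\psi_M$ on endomorphism algebras to surjectivity on unit groups, a step the paper asserts without justification; your argument for this is sound.
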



\subsubsection{When is $kQ/J^2$ representation-finite?} 
 
One way to guarantee that $R_d(kQ/J^2)$ has a dense orbit is if $kQ/J^2$ is of finite representation type.
There is a method to check this in \cite{Ga2}, namely the separation quiver.
Given a quiver $Q = (Q_0,Q_1)$, the separation quiver of $Q$ has as vertices the disjoint union $Q_0 \cup Q_0^*$, where $Q_0^* = \{i^* \mid i \in Q_0 \}$ is a copy of $Q_0$, 
the arrows are $\{ (a' \colon i \to j^*) \mid (a \colon i \to j) \in Q_1 \}$.
There is a bijection between indecomposable isomorphism classes of non-simple $kQ/J^2$ modules and isomorphism classes of non-simple representations of the separation quiver of $Q$.
That implies $kQ/J^2$ is of finite representation type if and only if the separation quiver is of finite representation type.
By Gabriel's theorem that holds true if and only if all its components are of Dynkin type.
\begin{example}
Consider a quiver $Q$, drawn on the left, and its separation quiver beside it.
\[
\xymatrix{
1 \ar@/^/[r] & \ar@/^/[l] 2 \ar@(dl,dr) \ar[r] &  3 \ar@(dl,dr)
}
\qquad
\xymatrix{
1 \ar[dr] & \ar[dl] 2 \ar[dr] \ar[d] & 3 \ar[d] \\
1^* & 2^* & 3^*
}
\]
The separation quiver is of type $E_6$, so it is of finite representation type. Hence $kQ/J^2$ is of finite representation type.
\end{example}


\section{Examples}
\label{sec:examples}


Let $(Q, \dd)$ be a quiver and a dimension filtration such that there is a dense $\GL_d$-orbit in the image of $\pi_{\dd}$. 
Let $M$ be a point in this orbit and assume that the $kQ/J^s$-module $M$ is indecomposable, rigid and fulfils $\End_{kQ/J^s} (Q)=k$. 
Then the operation of $\Aut_{kQ/J^s} (M)=k^*$ on the fibre $\pi^{-1}_{\dd}(M)$ is trivial. 
This means that if there is a Richardson orbit for $(Q, \dd)$, then $\pi_{\dd} $ is a desingularisation of $\Bild \pi_{\dd}$. \\
However this does not have to be the case, as we see in the following example.

\subsection{Embedding $\widetilde{D_4}$}
\label{subsec:D4}

Let $Q$ be the quiver of type $D_4$ with a sink in the middle
\[
\raisebox{-12pt}{Q =} \;
\xymatrix@R=0.5em{
1 \ar[dr] & & 2 \ar[dl] \\
 &3 & \\
  & & 4. \ar[ul]
}
\] 
We consider $s=3$ and the dimension filtration 
\[ 
\dd = \left(\begin{smallmatrix} 0 && 0 \\ 
   & 1& \\ && 0 \end{smallmatrix}, \begin{smallmatrix} 0 && 0 \\ 
   & 2& \\ && 0 \end{smallmatrix},\begin{smallmatrix} 1 && 1 \\ 
   & 2& \\ && 1\end{smallmatrix}  \right) .
\]
Since $kQ/J^3= kQ$ is representation-finite, there is a dense orbit in $\Bild \dd $. 
Consider the following subquiver $Q'$ of $Q^{(3)}$:
\[ 
\xymatrix@R=0.7em{
1_3 \ar[dr] & & 2_3 \ar[dl] \\
  & 3_2 & \\
 3_1 \ar[ur]_{b(3_1)} & & 4_3. \ar[ul]
}
\] 
For any representation $M$ of $Q'$ we get an $N_3(Q)$-module $N$ by adding the identity for the arrow $b(3_2)$, and the zero vector space at the vertices $i_t$ for $i= 1,2,4, \; t=1,2$.
 
This gives a faithful embedding of $kQ' \bil \fdmod$ in $N_2(Q) \bil \fdmod$, and $N$ is $\Delta$-filtered if the map $M_{b(3_1)}$ is injective.
But there there is no Richardson orbit for $(Q,\dd )$, because the quiver 
representations of type $\widetilde{D_4}$ of dimension $ d' = \left( \begin{smallmatrix} 1 && 1 \\ &2& \\ 1&&1 \end{smallmatrix} \right)$ have no dense orbit, and $M_{b(3_1)}$ is injective on an open subset of $\rep_{d'}(Q')$. 
Since $Q$ is representation finite, $\Bild \pi_{\dd }$ is an orbit closure, so the condition that fails is actually the relaxed Richardson property.


\subsection{The Kronecker quiver}
\label{subsec:Kronecker}
Let $Q$ be the $2$-Kronecker quiver $\xymatrix{1 \ar@<1ex>[r] \ar@<-1ex>[r] &2}$. 
If we take $\dd =((0,1), (1,1))$, then we claim $\Bild \pi_{\dd} = \rep_{(1,1)}$; for  every $kQ$-module 
$M= (\xymatrix{k \ar@<1ex>[r]^{\lambda } \ar@<-1ex>[r]_{\mu} & k} )$ 
of dimension vector $(1,1)$ there is a $\dd$-dimensional stable $\NtwoQ$-module
\[ 
\raisebox{-18pt}{N =} \; \xymatrix{ k \ar@<1ex>[dr]_{\mu} \ar[dr]^{\lambda} & k \\ 0 \ar[u] & k \ar[u]_{1}   } 
\]
with $eN=M$. 
It is well-known that there is no dense $\GL_{(1,1)}$-orbit in $\rep_{(1,1)}$, thus there exists no Richardson orbit for $(Q, \dd)$. 
If we $\lambda $ and $\mu$ are not both zero, then $\dd = \Dim c(M)$. 
Thus $(Q,\dd)$ has the relaxed Richardson property by Theorem \ref{thm:relax}.


\subsection{Auslander algebras}
\label{subsec:Auslander}
As has been emphasised before, $\NsQ$ is intended to generalise certain aspects of the Auslander algebra of $k(x)/\langle x^{s} \rangle$.
With the help of our quasi-hereditary structure we can tell exactly in what cases $\NsQ$ is actually an Auslander algebra.
\begin{prp}
Let $s \geq 2$, then $\NsQ$ is an Auslander algebra if and only if all connected components of $Q$ are an oriented cycle.
In this case the vertices in the $s$-th layer correspond to the projective-injective modules over $kQ/J^s$.
\end{prp}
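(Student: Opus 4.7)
The plan is to prove the equivalence by identifying $\NsQ$ with the Auslander algebra of $A := kQ/J^s$.

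For the $(\Leftarrow)$ direction, assume every connected component of $Q$ is an oriented cycle. Then $A = kQ/J^s$ is a direct product of self-injective Nakayama algebras of Loewy length $s$. Its indecomposable modules are the uniserial modules $M(i,t)$ with top $S(i)$ and composition length $t$, for $i \in Q_0$ and $1 \leq t \leq s$, giving exactly $s|Q_0|$ pairwise non-isomorphic modules, matching the number of simples of $\NsQ$. Setting $E := \bigoplus_{(i,t)} M(i,t)$, one has $E \cong \bigoplus_{t=1}^s A/J^t$ as $A$-modules, so $\End_A(E)^{\op}$ coincides with the ADR-algebra $\cR_A$ of $A$ from Remark \ref{rem:ADR} and is the Auslander algebra of $A$. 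I would then identify $\NsQ \cong \cR_A$ by matching Gabriel quivers and relations: the Auslander--Reiten quiver of $A$ has vertices $M(i,t)$ identified with the vertices $i_t$ of $\Qs$, the irreducible morphisms correspond to the vertical and diagonal arrows of $\Qs$, and the mesh relations in the AR-quiver translate exactly to the relations (R1) and (R2) of $\NsQ$. An explicit map on generators combined with a dimension count completes the isomorphism. Under this identification, the indecomposable projective-injective $\NsQ$-modules correspond to summands of $E$ that are both $A$-projective and $A$-injective; since $A$ is self-injective, these are precisely the projective summands $M(i,s) = A e_i$, corresponding to the idempotents $e(i_s)$ in the $s$-th layer of $\Qs$.

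For the $(\Rightarrow)$ direction, assume $\NsQ$ is an Auslander algebra, so $\NsQ \cong \End_{A'}(E')^{\op}$ for a unique (up to Morita equivalence) representation-finite algebra $A'$ with additive generator $E'$. The central step is to identify $A' \cong e\NsQ e = kQ/J^s$ (where $e = e_s$, cf. Lemma \ref{eNsQe}), which I would approach by showing that the minimal faithful projective-injective left $\NsQ$-module equals $\NsQ e$. Granting this, the Auslander property forces $A' = kQ/J^s$ to be self-injective (since projective summands of $E'$ as $A'$-modules must also be injective, by the dominant dimension condition) and to have exactly $s|Q_0|$ indecomposable modules. A vertex with two distinct outgoing arrows would produce a projective $kQ/J^s$-module with non-simple socle, contradicting self-injectivity for a basic algebra; the dual argument rules out multiple incoming arrows. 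Furthermore, isolated vertices of $Q$ are excluded because such a vertex contributes a direct-product factor isomorphic to the path algebra $k\A_s$ to $\NsQ$, whose dominant dimension equals $1$ for $s \geq 2$ (the simple projective at the sink of $\A_s$ has injective envelope equal to $P(1) = I(s)$, which is projective-injective, but the next syzygy produces $I(s-1)$, which is not projective). These constraints together force every vertex of $Q$ to have exactly one incoming and exactly one outgoing arrow, so each component of $Q$ is an oriented cycle.

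The main obstacle is identifying $A' \cong kQ/J^s$ in the $(\Rightarrow)$ direction, equivalently showing that the indecomposable projective-injective $\NsQ$-modules are precisely $P(i_s)$ for $i \in Q_0$. I would attack this using the right ultra strongly quasi-hereditary structure of $\NsQ$ from Proposition \ref{prp:f-delta} together with the characteristic tilting module $T = \bigoplus T(i_t)$ from Proposition \ref{prp:char-tilt}; in particular the summand $T(i_1) = I(i_s)$ is simultaneously a tilting summand and the injective envelope of $S(i_s)$. Combined with the assumed dominant dimension inequality $\mathrm{dom.dim}(\NsQ) \geq 2$, this structure should pin down $\NsQ e_s$ as the minimal faithful projective-injective module, closing the argument.
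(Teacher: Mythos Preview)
Your $(\Leftarrow)$ direction is correct and more explicit than the paper's, which simply declares the converse ``clear''. The identification of $\NsQ$ with the Auslander algebra of $kQ/J^s$ via the AR-quiver and mesh relations is the expected argument.

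In the $(\Rightarrow)$ direction your strategy diverges from the paper's, and the step you yourself flag as the ``main obstacle'' is a genuine gap that your sketch does not close. Knowing $T(i_1)=I(i_s)$ and $\mathrm{dom.dim}\,\NsQ\geq 2$ tells you that each $I(i_s)$ is projective (the paper uses exactly this), but it does \emph{not} tell you that these projectives lie in the $s$-th layer, nor that no other indecomposable projective is injective. Concretely, for $Q=\A_2$ and $s=2$ one computes that the projective-injectives of $N_2(\A_2)$ are $P(1_1)$ and $P(1_2)$, neither of which is $P(i_s)$; of course $N_2(\A_2)$ is not an Auslander algebra, but this shows your desired description of the projective-injectives really requires the Auslander hypothesis in a nontrivial way. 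The paper avoids this difficulty entirely: using that $\NsQ$ is simultaneously left and right strongly quasi-hereditary (Proposition~\ref{prp:f-delta}), it invokes \cite[Prop.~2.7]{Ei} and then \cite[Theorem~3]{Ei} to conclude outright that the underlying representation-finite algebra $\Lambda$ is \emph{uniserial}. Only then does it run a Loewy-length count on the chain $I(i_s)\twoheadrightarrow\cdots\twoheadrightarrow I(i_1)$ to pin down $\Lambda\cong kQ/J^s$ with $Q$ a union of oriented cycles.

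There is also a smaller issue downstream. Even granting $A'\cong kQ/J^s$, your justification that $A'$ is self-injective ``by the dominant dimension condition'' is not right: $\mathrm{dom.dim}\geq 2$ is automatic for any Auslander algebra and says nothing further about $A'$. What would work instead is the following: under the equivalence $e_s(-)\colon \NsQ\text{-}\proj\to A'\text{-}\modu$, the module $P(i_s)$ is sent to $e_s\NsQ e(i_s)=A'e_i$, a projective $A'$-module; on the other hand projective-injective $\NsQ$-modules always correspond to injective $A'$-modules. If you knew the projective-injectives were exactly the $P(i_s)$ this would force $A'$ to be self-injective and your remaining degree argument would go through. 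But this again hinges on the unresolved obstacle above.
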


\begin{proof}
Let us assume $\NsQ$ is the Auslander algebra of a representation finite algebra $\Lambda$ for $s \geq 2$.
From \cite[Prop. 2.7]{Ei} we see that if an Auslander algebra is simultaneously left and right strongly quasi-hereditary, then $\cF(\Delta)$ is precisely the full subcategory of torsionless modules. 
Since $\NsQ$ has such a quasi-hereditary structure \cite[Theorem 3]{Ei} implies that $\NsQ$ must be the Auslander algebra of a uniserial algebra $\Lambda$. 

For each vertex $i$ of $Q$, we have the injective envelope $P(i_1) \to I(i_s)$.
Since $\NsQ$ has dominant dimension at least 2 this implies $I(i_s)$ is in fact projective. 
It is well known that for Auslander algebras that there is an equivalence from $\Lambda \bil \modu$ to the injective $\NsQ$-modules, with projective-injective $\NsQ$ modules corresponding to projective $\Lambda$-modules, thus the filtration
\[
I(i_s) \twoheadrightarrow I(i_{s-1}) \twoheadrightarrow \cdots \twoheadrightarrow I(i_1)
\]
corresponds to a filtration of a projective $\Lambda$-module.
Recall that the indecomposable $\Lambda$-modules correspond to simple $\NsQ$-modules in  a canonical way.
Since $\Lambda$ is uniserial all modules in this sequence are indecomposable and distinct, so for each $i\in Q$ we get $s$ pairwise non isomorphic $\Lambda$-modules generated by the same projective module.
Since the idempotents of $\NsQ$ must correspond bijectively to indecomposable $\Lambda$-modules, all indecomposable modules arise in this way. 
In particular all projective $\Lambda$-modules have Loewy-length $s$.
But by the classification of uniserial algebras that shows $\Lambda$ is of the form $kQ/J^s$ where $Q$ is an oriented cycle.
It was already noted that the vertices in the $s$-th layer correspond to the projective-injective $kQ/J^s$-modules.

The converse is clear, note that the vertex $i_t$ of $\Qs$ corresponds to the quotient of $P(i)$ of length $t$. 
\end{proof}

Since $kQ/J^s$ is a representation-finite self-injective algebra, every intermediate extension $c(M)$ for $M \in \rep_d(kQ/J^s)$ is rigid (cf. \cite{CBSa}). 
In fact $c$ gives an equivalence of categories $kQ/J^s \bil \modu \to \add(T)$, where $T$ is the characteristic tilting module of $\NsQ$.
Consequently, for every $\dd = \Dim c(M)$, the map $\pi_{\dd}$ is a desingularisation of $\overline{\cO_M}$, which is already studied in \cite[Thm 7.5]{CBSa}. 

On the other hand the functor $r$ is simply the standard projective embedding of $kQ/J^s \bil \modu$ into the module category of its Auslander algebra.
This follows from the fact that $e \Gamma$ is the an Auslander generator for $kQ/J^s \bil \modu$, and $r = \Hom_{kQ/J^s} (e\Gamma,-)$.
Consequently $r(M)$ is rigid for all $M \in kQ/J^s \bil \modu$.


\subsection{Lifting projective modules}
\label{subsec:projectives} 
This provides examples of quiver-graded Richardson orbits where the generic fibre of $\pi_{\dd}$ is not zero-dimensional. 
Let $Q$ be any quiver and $s\geq 1$ arbitrary. Then any projective $\NsQ$-module is rigid, and from general facts about quasi-hereditary algebras it is in the category $\cN$. 
So if $\dd $ is a dimension vector of a projective $\NsQ$-module, then $(Q, \dd )$ has a Richardson orbit. \\
If $P = \NsQ e(i_t)$ for some $t\in \{1, \ldots , s\}$, then we have 
$e P \cong (kQ/J^t)e(i)$ as $kQ/J^s$-modules. 
For $s$ large enough this gives many rigid modules $P$ with 
\[ ce(P )\neq P, \quad P  \neq re(P) .\]
On the other hand, we can calculate for $\dd =\Dim P, d= \Dim eP$ 
\[ 
\begin{aligned}
\dim \pi_{\dd}^{-1}(eP) &=\dim \RF-\dim \rep_d \\
& = \dim_k \Hom_{kQ/J^s}( eP, eP ) - \dim_k \Hom_{\NsQ}(P, P).
\end{aligned}
\]
For $s\geq 2$ it is easy to find examples where this is $>0$.


\subsection{Algorithm for $Q = \A_2$} 
\label{subsec:algor}

For the quiver $Q=\A_2$ and arbitrary but fixed $s\geq 1$ the category $\mathcal{N}$ has only finitely many indecomposable objects. 
This implies that for all dimension filtrations $\dd $ there exists a Richardson orbit for $(Q,\dd )$, we give an algorithm calculating it. 
We write $\A_2 = \xymatrix{x \ar[r] & y}$ and let $\dd = (d_1,\dots, d_s)$ be a dimension filtration, i.e. a dimension vector for $N_s(\A_2)$ with $d_t \leq d_{t+1}$ pointwise for $t=1,\dots,s-1$. 
Up until now $i,j$ have denoted vertices of $Q$, but due to a a high demand of different indices we repurpose the indices $i,j$ here to denote layers in $\Qs$.
For $N_s(\A_2)$-modules $M,N$, we write $[M] := \underline{\dim} M$ as well as
$\left[M,N \right]:= \dim \Hom_{N_s(\mathbb{A}_2)}(M,N)$ and 
$\left[M,N \right]^1:= \dim \ext^1_{N_s(\mathbb{A}_2)}(M,N)$. 
There is unique vector $\delta_{\dd} = ((\hat{x}_i)_{i=1}^s,(\hat{y}_j)_{j=1}^s) \in \N_0^{2s}$ such that
\[
	\dd = \sum_{i=1}^s \hat{x}_i [\Delta(x_i)] + \sum_{j=1}^{s} \hat{y}_j [\Delta(y_j)].
\]
We call this the $\Delta$-dimension vector corresponding to $\dd$.
We write $\delta_{x_i} := \delta_{[\Delta(x_i)]}$ and $\delta_{y_j} := \delta_{[\Delta(y_j)]}$. 
For $i>j$ we denote by $E(i,j)$ the indecomposable module with $\Delta$-dimension vector $\delta_{x_i} + \delta_{y_{j}}$.

\begin{prp}
The following algorithm returns a rigid $N_s(\A_2)$-module.

Let $\dd$ be a dimension filtration and let $\delta_{\dd} = ((\hat{x}_i)_{i=1}^s,(\hat{y}_j)_{j=1}^s)$ denote the corresponding $\Delta$-dimension vector. Let $M=0$ be the trivial $N_s(\A_2)$-module.
We execute the following steps:
\begin{enumerate}[$(1)$]
\item
If $\hat{x}_i=0$ for all $i=1,\dots,s$ go to step $(3)$.
Otherwise let $i$ be minimal such that $\hat{x}_i \neq 0$ and go to step $(2)$.

\item
If $\hat{y}_{j} = 0$ for all $j < i$, replace $M$ with $M \oplus \Delta(x_i)$ and $\delta_{\dd}$ with $\delta_{\dd} - \delta_{x_i}$, then go back to step $(1)$.
Otherwise let $j$ be maximal such that $j < i$ and $\hat{y}_{j} \neq 0$.
Replace $M$ with $M \oplus E(i,j)$ and $\delta_{\dd}$ with $\delta_{\dd} - \delta_{x_i} - \delta_{y_{j}}$.
Then go back to step $(1)$.

\item
Return the module $M \oplus \bigoplus_{j=1}^s \Delta(y_j)^{\hat{y}_j}$.
\end{enumerate}
\end{prp}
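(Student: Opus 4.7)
The plan is to verify two claims: (1) the algorithm terminates and returns a module $M \in \cF(\Delta) = \cN$ of $\Delta$-dimension vector exactly $\delta_{\dd}$; and (2) $M$ is rigid. Claim (1) is immediate from the construction — each iteration of step (2) strictly decreases $\sum_i \hat x_i$, we only decrement strictly positive entries (so $\hat x, \hat y$ stay non-negative throughout), and at each step the $\Delta$-dimension vector of the summand added equals the amount subtracted from the running $\delta_{\dd}$. The summands $\Delta(x_i), \Delta(y_j), E(i,j)$ all lie in $\cN$, hence so does $M$.

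For claim (2) the first step is a complete $\ext^1$-table between the indecomposables $\Delta(x_i), \Delta(y_j), E(i,j)$ with $1\leq j<i\leq s$. By Proposition~\ref{prp:seqS1S2} the modules $\Delta(y_j)=P(y_j)$ and $\Delta(x_1)=P(x_1)$ are projective (because $y$ has no outgoing arrow in $\A_2$ and $a_2 b(x_1)=0$), and for $i\geq 2$ there is a projective resolution
\[
0 \to P(y_{i-1}) \to P(x_i) \to \Delta(x_i) \to 0.
\]
Applying the horseshoe lemma to $0\to \Delta(y_j)\to E(i,j)\to \Delta(x_i)\to 0$ yields
\[
0 \to P(y_{i-1}) \to P(y_j) \oplus P(x_i) \to E(i,j) \to 0.
\]
A cokernel calculation using $\Hom(-,B)$ applied to these resolutions, together with the relation (R1) rewriting $a_i\, b(x_{i-1})\cdots b(x_{i'})$ as $b(y_{i-2})\cdots b(y_{i'-1})\, a_{i'}$ to identify when multiplication by $a_i$ on $E(i',j')$ surjects onto $e(y_{i-1}) E(i',j')$, shows that the only nonzero $\ext^1$'s are: $\ext^1(\Delta(x_i),\Delta(y_j))\neq 0$ iff $j<i$; $\ext^1(\Delta(x_i),E(i',j'))\neq 0$ iff $j'<i<i'$; $\ext^1(E(i,j),\Delta(y_{j'}))\neq 0$ iff $j<j'<i$; and $\ext^1(E(i,j),E(i',j'))\neq 0$ iff $j<j'<i<i'$.

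Finally I would verify combinatorially that the algorithm never produces a pair of summands in any of these four configurations. Two invariants suffice: the values of $i$ processed in step (2) are non-decreasing, and $\hat x, \hat y$ are pointwise non-increasing throughout the execution. A summand $\Delta(x_i)$ is added only when $\hat y_{j''}=0$ for every $j''<i$ at that moment (so thereafter); a summand $E(i,j)$ is added with $j$ the maximal $j''<i$ with $\hat y_{j''}>0$, so $\hat y_{j''}=0$ for every $j<j''<i$ after that step. Each of the four bad cases collapses immediately: if both $E(i,j)$ and $E(i',j')$ occur with $j<j'<i<i'$, then $E(i,j)$ is processed first (as $i<i'$), forcing $\hat y_{j'}=0$ before $i'$ is processed, contradicting the pairing $(i',j')$; the remaining three cases follow from the same pair of invariants. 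Hence $\ext^1$ vanishes between all pairs of summands, so $\ext^1(M,M)=0$.

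I expect the computation of $\ext^1(E(i,j), E(i',j'))$ to be the main obstacle: the horseshoe resolution has two generators in degree one, and one must carefully track the interaction between the diagonal multiplication $a_i$ (rewritten via (R1)) and the vertical multiplication $b(y_{i-2})\cdots b(y_j)$ to check that, except in the crossing configuration $j<j'<i<i'$, at least one of the two maps into $e(y_{i-1}) E(i',j')$ is an isomorphism. Once this is in place, the combinatorial side of the proof reduces to the standard statement that a greedy max-matching of $\hat x_i$'s with strictly smaller $\hat y_j$'s is non-crossing.
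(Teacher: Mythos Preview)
Your proposal is correct and follows essentially the same two-step strategy as the paper: first compute the $\ext^1$-table between the indecomposables $\Delta(x_i)$, $\Delta(y_j)$, $E(i,j)$ (the paper does this via the long exact sequence attached to $0\to\Delta(y_j)\to E(i,j)\to\Delta(x_i)\to 0$, together with the observations that $\Delta(y_j)$ is projective and $\Delta(x_i)\in\add T$, rather than via your horseshoe resolution, but arrives at the same table), then verify combinatorially that the algorithm never selects a pair of summands in a nonvanishing configuration. Your monotonicity invariants and the case analysis for the crossing pattern $j<j'<i<i'$ are exactly the paper's argument in Case~(d).
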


\begin{proof}
Denote the module returned by the algorithm by $M$. 
We write $M=M_1\oplus M_2 \oplus M_3$, 
with $M_1\in \add \bigoplus_{i=1}^s \Delta(x_i)$, 
$M_2 \in \add\bigoplus_{i>j} E(i,j)$ and $M_3 \in \add \bigoplus_{j=1}^s \Delta(y_j)$.  \\
We observe that $T= \bigoplus_{i=1}^s \Delta (x_i)\oplus \bigoplus_{i=1}^{s-1} E(i+1,i) \oplus \Delta (y_s)$ is the characteristic tilting module of $N_s(\A_2) \bil \fdmod$, and that $\Delta (y_j)$ is projective for $j=1,\dots,s$. 
We use these properties and apply appropriate Hom-functors to short exact sequences of the form
\[ 
0\to \Delta (y_j ) \to E(i,j) \to \Delta (x_i) \to 0,
\] 
to calculate all missing dimensions of Ext-groups.

We know $\ext^1(M,M_1) = 0$, because $M$ is in $\cF(\Delta)$ and $M_1 \in \add(T)$. 
We also have $\ext^1(M_3,M) = 0$, because $M_3$ is projective.
It remains to show:
\[ 
\begin{aligned}
& \textbf{(a)}\; \ext^1 (M_1, M_3)=0, \quad &\textbf{(c)}\; \ext^1(M_2, M_3)=0, \\
& \textbf{(b)}\; \ext^1 (M_1, M_2)=0, \quad &\textbf{(d)} \;\ext^1(M_2, M_2)=0 .
\end{aligned}
\]

\paragraph{Case (a):}
We have $\left[ \Delta (x_i), \Delta (y_j)\right]^1=1$ if and only if $i>j$. If $\Delta(x_i)\in \add (M_1)$ and $i > j$ we see that during the iteration of step (2) that yields $\Delta(x_i)$, we have $\hat{y}_j = 0$. 
But then $\Delta (y_j) \notin \add (M_3)$, because $\hat{y}_j$ is still zero in step (3).

\paragraph{Case (b):}
From the long exact sequence obtained by applying $\Hom_{\cN}(\Delta(x_t),-)$ to the short exact sequence above, we have $\left[\Delta (x_t), E(i,j)\right]^1=1$ if and only if $i>t>j$. 
Let $\Delta (x_t) \in \add (M_1)$ and let $i>t>j$. 
By the condition in step (2) we have $\hat{y}_j =0$ during the step that yields $\Delta(x_t)$. 
If $E(i,j) \in \add(M_2)$, then that is yielded in a later iteration of step (2), because $i>t$. But that can't be since $\hat{y}_j$ is still zero in that step.

\paragraph{Case (c):}
From the long exact sequence obtained by applying $\Hom_{\cN}(-,\Delta(y_t))$ to the short exact sequence above we have $\left[E(i,j), \Delta (y_t)\right]^1=1$ if and only if $i>t>j$. 
Let $E(i,j)\in \add (M_2)$ and let $i>t>j$.
In the step that yields $E(i,j)$ we must have $\hat{y}_t =0$, otherwise $j$ is not maximal. But then $\hat{y}_t=0$ in step (3) also, thus $\Delta(y_t) \notin \add M$.

\paragraph{Case (d):}
We apply $\Hom_{\cN}(-, E(t,l))$ to the short exact sequence above to obtain
\[ \left[ E(i,j), E(t,l)\right]^1= 
\left[ E(i,j), E(t,l)\right] - \left[\Delta (y_j), E(t,l)\right] + \left[\Delta(x_i), E(t,l)\right]^1.
\]
We first note $\left[ E(i,j), E(t,l)\right] =1$ if and only if one has $i\geq t, j\geq l$. 
Also we have $\left[ \Delta (y_j), E(t,l)\right] =1$ if and only if $j\geq l$. From case (b) we know $\left[\Delta(x_i), E(t,l)\right]^1=1$ if and only if $t>i>l$. We conclude 
\[
\left[ E(i,j), E(t,l)\right]^1= 
\begin{cases} 1 ,& \text{if }t>i>l>j; \\
0,&\text{else. }
\end{cases}
\]
Let $E(i,j), E(t,l) \in \add (M_2)$ and assume $t>i>l>j$.
From step $(1)$ in the algorithm we see $E(i,j)$ is obtained before $E(t,l)$.
Thus $\hat{y}_j,\hat{y}_l > 0$ at the start of the iteration of step $(2)$ that yields $E(i,j)$.
But since $i>l>j$ that is a contradiction to $j$ being maximal such that $\hat{y}_j > 0$ and $j > i$. Thus $E(i,j)$ and $E(t,l)$ cannot both be summands of $M$.
\end{proof}

\providecommand{\bysame}{\leavevmode\hbox to3em{\hrulefill}\thinspace}
\providecommand{\MR}{\relax\ifhmode\unskip\space\fi MR }
\providecommand{\MRhref}[2]{%
  \href{http://www.ams.org/mathscinet-getitem?mr=#1}{#2}
}
\providecommand{\href}[2]{#2}

\end{document}